\theoremstyle{plain}
\newtheorem{theorem}{Theorem}
\newtheorem{lemma}{Lemma}
\newtheorem{proposition}{Proposition}
\theoremstyle{definition}
\newtheorem{definition}{Definition}
\newtheorem{example}{Example}
\newtheorem*{remark}{Remark}
\newcommand{\RR}{\mathbb{R}}
\newcommand{\ZZ}{\mathbb{Z}}
\newcommand{\xx}{\mathbf{x}}
\newcommand{\yy}{\mathbf{y}}
\newcommand{\xee}{\mathbf{e}}
\renewcommand{\AA}{\mathcal{A}}
\newcommand{\VV}{\mathcal{V}}
\newcommand{\eps}{\varepsilon}
\newcommand{\Csum}{\mathcal{C}\mathcal{S}}
\newcommand{\Ga}{\Gamma}
\newcommand{\GraphsNN}[1]{\mathfrak{G}(#1)}
\newcommand{\GraphsN}[1]{\widetilde{\mathfrak{G}}(#1)}
\newcommand{\beq}[1]{\begin{equation}\label{#1}}
\newcommand{\eeq}{\end{equation}}
\newcommand{\ba}[1]{\begin{array}{#1}}
\newcommand{\ea}{\end{array}}
\newcommand{\Smin}[2]{S_\land^{#1}({#2})}
\newcommand{\Smax}[2]{S_\lor^{#1}({#2})}
\newcommand{\mmin}[1]{m_{\land}^{#1}}
\newcommand{\mmax}[1]{m_{\lor}^{#1}}
\newcommand{\fname}{\mathfrak{f}}
\newcommand{\Arr}{\mathrm{Arr}} % set of preferential arrangements
\newcommand{\Rpc}{\overline{\RR}_{\geq 0}}
\newcommand{\maxx}[2]{\lor(#1\mid #2)} % max over a set
\newcommand{\minx}[2]{\land(#1\mid #2)} % min over a set
\newcommand{\Rplus}{\mathbb{R}_{>0}}
\newcommand{\RplusC}{\mathbb{R}_{\geq 0}}
\newcommand{\arcs}[1]{\mathcal{A}({#1})} % set of arcs of digraph
\newcommand{\eqnod}{\leftrightarrow}
\newcommand{\conds}[1]{{#1}^c} %condensation of digraph
\newcommand{\Max}{\mathrm{Max}} %terminal set of a partial order
\newcommand{\pows}{\mathcal{P}} %power set
\newcommand{\Ps}[1]{\mathcal{P}_0(#1)} %power set excl emptyset
\newcommand{\Mp}[1]{\mathfrak{M}_{#1}}
\newcommand{\sumn}{\mathrm{sum}}
\newcommand{\sump}[1]{\mathrm{sum}_{#1}}
\title{Three steps away from Shapiro's problem:
lower bounds for 
%generalized cyclic 
graphic sums
with functions `max' or `min' in denominators}
\author{Sergey Sadov\footnotemark[1]\footnote{E-mail: serge.sadov@gmail.com}}
\date{}
\begin{document}
\maketitle
\thispagestyle{empty}

%\epigraph{%
%У г р ю м - Б у р ч е е в, бывый прохвост. Разрушил старый город и построил другой на новом месте.}%
%``Destroyed the old town and built a new one in a different place''.}%
%         {Mikhail Saltykov-Shchedrin, \textit{The History of a Town}}
% 

%С одного конца хитро, с другого мудреней того; а в середке ум за разум заходит. — (о машине). 
%См. ЧУДО ДИВО МУДРЕНОЕ …   В.И. Даль. Пословицы русского народа

%Пойа с.92 Ищем ранее решенные задачи, которые получаются из данной путем обобщения, специализации или аналогии.

%Do the difficult things while they are easy and do the great things while they are small. A journey of a thousand miles must begin with a single step. 
%Lao Tzu

%You have freedom when you're easy in your harness. Robert Frost, original source unknown
%[1954, at a news conference on the eve of his 80th birthday]
%https://www.brainyquote.com/authors/robert-frost-quotes

%He didn't know what to do with the sea so he just swallowed it [??]

%“Turn your obstacles into opportunities and your problems into possibilities.”
%― Roy T. Bennett, The Light in the Heart 

\begin{abstract}

Taking Shapiro's cyclic sums $\sum_{i=1}^n x_i/(x_{i+1}+x_{i+2})$ (assuming index addition mod $n$) as a starting point, we introduce a broader class of cyclic sums, called generalized Shapiro-Diananda sums, where the denominators are $p$-th order power means of the sets $\{x_{i+j_1},\dots,x_{i+j_k}\}$ with
fixed distinct integers $j_1,\dots,j_k$ and $1\leq i\leq n$. 

Generalizing further, we replace the set of arguments
of the power mean in the $i$-th denominator by an arbitrary nonempty subset of $\{1,\dots,n\}$
interpreted as the set of out-neighbors of the node number $i$ in a directed graph with $n$ nodes.
We call such sums graphic power sums since 
their structure is controlled by directed graphs.
%they are parametrized by directed graphs.

The inquiry, as in the well-researched case of Shapiro's sums, concerns the greatest lower bound of the given ``sum'' as a function of positive variables
$x_1,\dots,x_n$. 
We show that the cases of  $p=+\infty$ (max-sums)
and $p=-\infty$ (min-sums) are tractable.

For the max-sum 
associated with a given graph the g.l.b.\ is always an integer; for a strongly connected graph it equals to graph's girth.
% equal to the sum of girths of 
%``final'' 
%certain strong components of the defining graph.

For the similar min-sum, we could not relate the g.l.b.\ 
to a known combinatorial invariant;
we only give some estimates and describe 
a method for finding the g.l.b., which has factorial complexity in %the number of nodes 
$n$.

A satisfactory analytical treatment is available for the secondary minimization --- when the g.l.b.'s of min-sums for individual graphs are mininized over the class of strongly connected graphs with $n$ nodes.
The result (depending only on $n$) is found to be asymptotic to $e\ln n$.

%Introducing a secondary 
%minimization --- over all strongly connected graphs with $n$ nodes --- we find that the 
%so defined ``second-tier'' minimum
%is asymptotic to $e\ln n$.

% 
%While linear in $n$ asymptotics have been found for the minima of a number of previously studied cyclic %sums, in this case the double minimization leads to 

\medskip
Keywords: Shapiro's problem, minimization, Shapiro-Drinfeld constant, power means, cyclic sums,  Shapiro-Diananda sums, directed graphs, graphic sums, girth, preferential arrangements, SDR.

\medskip
MSC primary: 
26D20,  % Other analytical inequalities
secondary:
05C35. % Graph theory, Extremal problems 
%05C38 % Paths and cycles
%15A99, %Linear algebra- miscellaneous
%
%
%49-XX CALCULUS OF VARIATIONS AND OPTIMAL CONTROL;
%OPTIMIZATION [See also 34H05, 34K35,
%49Jxx Existence theories
%49J21 Optimal control problems involving relations other than differential
%equations
%49Kxx Optimality conditions
%49K21 Problems involving relations other than differential equations
%
%39-XX DIFFERENCE AND FUNCTIONAL EQUATIONS
%39A20, %: Multiplicative and other generalized difference equations, e.g. of Lyness type
%49M05, %Methods of successive approximations->Methods based on necessary conditions
%49K99 %: Necessary conditions and sufficient conditions for optimality->None of the above, but in this section
%

%
% 90-XX OPERATIONS RESEARCH, MATHEMATICAL PROGRAMMING
%90C35, %Programming involving graphs or networks
%90C27. %Combinatorial optimization

\end{abstract}

%\clearpage

%%%%%%%%%%%%%%%%%%%%%%%%%%%%%%%%
\section{Introduction}
\label{sec:intro}

In this paper we investigate lower bounds for the sums of the form
\beq{maxsum}
S_\lor(x_1,\dots,x_n)=\sum_{i=1}^n \frac{x_i}{\max\limits_{j\in\Omega_i} x_j}.
\eeq
which will be called {\em max-sums}, and for similarly looking {\em min-sums}
\beq{minsum}
S_\land(x_1,\dots,x_n)=\sum_{i=1}^n \frac{x_i}{\min\limits_{j\in\Omega_i} x_j},
\eeq
Here $x_1,\dots,x_n$ are positive real numbers and $\Omega_1,\dots,\Omega_n$
are the given nonempty subsets of the set $\{1,\dots,n\}$.

\smallskip
The problem proposed by H.S.~Shapiro in 1954
\cite{Shapiro_1954} asked to prove the cyclic inequality
$$
\frac{2}{n}\sum_{i=1}^n \frac{x_i}{x_{i+1}+x_{i+2}}\geq C
$$
with $C=1$ for all $n\geq 3$ and positive $x_1,\dots,x_n$. It is assumed that $x_{i+n}=x_{i}$.
%, $i=1,2$.
%, hence the term ``cyclic inequality''. 
The elementary case $n=3$ (Nesbitt's inequality)  appeared in print in 1903 at the latest
\cite{Nesbitt_1903}. 
Shapiro's original conjecture was shown to be generally wrong. The correct best constant, found by Drinfeld in 1971 \cite{Drinfeld_1971}, is $C\approx 0.989133$.
Details of the story about Shapiro's inequality can
be found, for instance, in \cite[Ch.~16]{Mitrinovic_1993}, \cite{Fink_1998} or \cite{Clausing_1992}. 

\smallskip
Let us describe the ``three steps'' from the headline that link Shapiro's problem
to the present one. 

Step 1 is a blunt generalization: the first summand  $x_1/(x_2+x_3)$ defining the cyclic pattern of Shapiro's sum is replaced by some function of a fixed (independent of $n$) number of variables. In such a broad setting,
statements of general nature may be available, see e.g.\ \cite{Goldberg_1960}; however, hardly anything quantitatively interesting can be proposed. 
%, which are then cyclically shifted to make up a cyclic %sum of the general form. 

Step 2 purports to be an intelligent specialization. Consider the family of patterns of the form $x_1/(|J|^{-1}\sum_{i\in J} x_i^p)^{1/p}$, where $p\in\RR$ and $J$ is some fixed set. The corresponding cyclic sums for specific sets $J$ have been studied in the literature; the problem is difficult and the results are sketchy --- 
%and incomplete 
see Sec.~\ref{sec:cycsum}.  The situation is different in the limiting cases $p=\pm\infty$, that is, for patterns of the form
$x_1/\min_{i\in J} x_i$ or $x_1/\max_{i\in J} x_i$.
Then the problem becomes very tractable:
in essense, complete results can be obtained through an application of the inequality between the arithmetic and geometric means (AGM). We treat these cases from the convenience of a prepared, more general position; see Proposition~\ref{prop:maxsum-cyclic}
for max-sums and
Proposition~\ref{prop:minsum-cyclic} for min-sums.

The purpose of Step 3 is to partly restore the degree of nontriviality by a bold structural modification of the objective function. For patterns of Step~2 with $|p|<\infty$, the nature of complications was analytical (intractability of conditions of extremum). 
%(or algebraic, if one thinks of the critical point equations). 
Now we make it combinatorial with $p=\pm\infty$, assigning to every index $i$ an arbitrary set $\Omega_i\subset\{1,\dots,n\}$ that prescribes on which of the $x_j$'s the function in the $i$-th denominator depends. In the cyclic case, $\Omega_i=(\Omega_1+i-1)\mod n$.
In general, we define the correspondence $i\mapsto \Omega_i$ in terms of directed graphs, hence the term ``graphic sums'' (Sec.~\ref{sec:grsum}). 

\smallskip
The proposed metamorphosis
%transformation 
of Shapiro's problem, which admittedly looks rather artificial, 
is motivated by the desire to obtain a setting, quite general, while amenable to analysis, yet nontrivial. It is hoped that the reader will find our choice also justified by the final results (particularly, Theorems~\ref{thm:max-girth} and \ref{thm:minminsum-strong}), 
%Sec.~\ref{sec:max} and Sec.~\ref{sec:min}), 
which, in 
%a view cherished by 
a biased view of 
this author, 
are not without elegance. These results are analytycal; there is also an open question concerning efficient computation of the greatest lower bound for a given individual min-sum (Sec.~\ref{sec:min}).

\smallskip
%Our transformation makes perhaps a little more
%sense than the brutal action of the lunatic mayor
%from the epigraph.
%
If one compares
Shapiro's problem to a small nice cut in a rough rock, long on display, then our present endeavour is alike to chipping at a different spot aiming to create a new attractive facet --- and maybe (not expecting too much) --- to get a bit closer to revealing deeper secrets of the entire crystal. 

\smallskip
Last but not least: the analysis of the functions  \eqref{maxsum} and especially \eqref{minsum} prompted the author to examine a class of extremal problems
of the type ``minimize the sum of $n$ variables provided products of certain sets of those variables are given'' --- a generalization of the AM-GM problem parametrized by combinatorial data.
This led to a little theory \cite{Sadov2020} that may present an independent interest. 
Here we make use of some results of \cite{Sadov2020}. 

\smallskip
The results in the narrow sense (more significant, called Theorems and others, called Propositions) with their proofs occupy only about half of the paper.
% are just sketched because their core can be found in
%
The author feels that it is desirable to present the subject for the first time in an appropriate, sufficiently broad context even if the generality of the primary definitions is not matched by the generality of theorems.

%\smallskip
%[Structure , formal]
%In Section~\ref{sec:motivation} we set up a formal context which indirectly relates the problem to the well-known Shapiro's cyclic inequality. 

\section{Cyclic sums of Shapiro-Diananda type}
\label{sec:cycsum}

The purpose of this section is to fix terminology and notation and to briefly review known results about sums structurally similar to Shapiro's, with power means in the denominators. Reading the review is not necessary
for understanding Section~\ref{sec:grsum} and the sequel;
however, it may help readers to get a more holistic perception of the subject.

\medskip
The cardinality of a finite set $\Omega$ is denoted $|\Omega|$.

For integers $a\leq b$, we denote by $[a,b]$ the set of $b-a+1$ consequtive integers $a,a+1,\dots,b$. If $a=1$, we use the abbreviation
$[n]=[1,n]=\{1,2,\dots,n\}$.
Reduction modulo $n$ in our context means, for the given $a$, finding $a'\in [n]$ such that $a\equiv a'\mod n$.
(So $0\!\!\mod n=n$.)

Suppose $n$ is fixed. 
The $n$-tuple $(x_1,\dots,x_n)$ will be abbreviated as $\xx$.
Let $\tau$ denote the (left) cyclic shift, $\tau:\,[n]\to[n]$, $i\mapsto i+1 \mod n$.
%, where $x_{n+i}$ is identified with $x_i$. 
Put $\xx^\tau=(x_2,\dots,x_{n},x_1)$.

If $J=(j_1,\dots,j_k)$ is an ordered $k$-tuple of integers, then 
we put $(\xx\mid J)=(x_{j_1'},\dots, x_{j_k'})$,
where $j_i'$ means $j_i$ reduced modulo $n$. 

%An interval $[a,a+1,\dots,b]$ of the set $\ZZ$ of integers
%will be abbreviated as $[a:b]$. Clearly, $|[a:b]|=b-a+1$.

%Given a function $f$ of $k=b-a+1$ variables labelled as
%$x_{a},x_{a+1},\dots,x_{b}$, we define the $n$-ary %shift-invariant 
%function, the {\em cyclic sum with local pattern $f$}, by
%$$
% \Csum_n[f](x_1,\dots,x_n)=\sum_{i=0}^{n-1} %f(\xx^{\tau^i}\mid[a:b]).
%$$

Given a function $f$ of $k=|J|$ variables, 
%labelled as $x_{j_1},\dots,x_{j_k}$, 
we define the $n$-ary shift-invariant 
function, the {\em cyclic sum with local pattern $f$}, by the formula
$$
 \Csum_n[f](x_1,\dots,x_n)=\sum_{i=0}^{n-1} f(\xx^{\tau^i}\mid J).
$$

One of the earliest examples with indefinite number of variables involving a function of such a form is Boltzmann's inequality 
\cite[Ch.~7, \S~81]{Boltzmann_1898}
$$
 \Csum_n[(x_1-x_2)\log x_1](\xx)=\log \left(x_1^{x_1-x_2}
x_2^{x_2-x_3}\cdots x_n^{x_n-x_1}\right) \geq 0.
$$

If $\varphi$ is some function of two variables
and $f(x_1,x_2,x_3)=\varphi\left(\frac{x_2}{x_1},\frac{x_3}{x_1}\right)$,
then
\beq{GodLev_sum}
 \Csum_n[f](\xx)=\varphi\left(\frac{x_2}{x_1},\frac{x_3}{x_1}\right)+\varphi\left(\frac{x_3}{x_2},\frac{x_4}{x_2}\right)+
\dots
%+\varphi\left(\frac{x_{n}}{x_{n-1}},\frac{x_1}{x_{n-1}}\right)
+\varphi\left(\frac{x_{1}}{x_{n}},\frac{x_2}{x_{n}}\right).
\eeq
Shapiro's $n$-th sum occurs when $\varphi(x,y)=\frac{2}{x+y}$.

\smallskip
Speaking about domains of the functions being discussed, it is sufficient for our purposes to always assume that all $x_i\geq 0$ and exclude those tuples that cause one or more of the denominators to vanish.

\smallskip
Put 
$$
 M_{k,p}(x_1,\dots,x_k)=\left(\frac{x_1^p+\dots+x_k^p}{k}\right)^{1/p}
$$
and, more generally, for $J=(j_1,\dots,j_k)$,
$$
 M_{n,J,p}(\xx)=M_{n,J,p}(x_{1},\dots,x_{n})=\left(\frac{1}{|J|}\sum_{i=1}^k x_{j_i'}^p\right)^{1/p}.
$$
Here, as before, $j_i'$ means $j_i$ reduced modulo $n$.

The limit cases are
$$
 M_{n,J,-\infty}(x_{j_1},\dots,x_{j_k})=\min_{j\in J} x_{j'}
$$
and
$$
 M_{n,J,+\infty}(x_{j_1},\dots,x_{j_k})=\max_{j\in J} x_{j'}.
$$
Let us call the following functions {\em the cyclic power sums of Shapiro-Diananda type} ({\em cyclic $p$-sums}, for short):
\beq{SDp_sum}
S_{n;k,p}(x_1,\dots,x_n)=\Csum_n\left[\frac{x_1}{M_{k,p}(x_2,\dots,x_{k+1})}\right](\xx)=
\left(\frac{k x_1^p}{x_2^p+\dots+x_{k+1}^p}\right)^{1/p}
+\cdots 
\eeq
and the generalized cyclic $p$-sums
\beq{SDp_gsum}
S_{n;J,p}(x_1,\dots,x_n)=\Csum_n\left[\frac{x_1}{M_{n,J,p}(x_1,\dots,x_n)}\right](\xx).
\eeq
Shapiro's $n$-th sum is $S_{n;2,1}$ or, as a particular case of \eqref{SDp_gsum},
$S_{n;(2,3),1}$.

\smallskip
It is obvious that for any $p\in[-\infty,+\infty]$
$$
S_{n;J,+\infty}(\xx)\leq S_{n;J,p}(\xx)\leq S_{n;J,-\infty}(\xx).
$$
Moreover, by the monotonicity of power means 
\cite[{\S\,16}]{HLP} it follows that
$$
p<p'\quad\Rightarrow\quad
S_{n;J,p}(\xx)\geq S_{n;J,p'}(\xx).
$$

Putting $x_1=\dots=x_n=1$, we get
\beq{triv_est_Snkp}
 \inf_{\xx} \frac{1}{n} S_{n;k,p}(\xx)\leq 1
\eeq
and, more generally,
\beq{triv_est_SnJp}
 \inf_{\xx} \frac{1}{n} S_{n;J,p}(\xx)\leq 1.
\eeq
for any finite set $J\subset \ZZ$.

Beyond these trivial obervations, minimizing cyclic sums is, for the most part, a subtle game of balancing, only patchily explored.
The big question is: when the inequality in \eqref{triv_est_Snkp} or \eqref{triv_est_SnJp} is strict --- and, in that case, what is the value of the greatest lower bound.

\smallskip
Under very weak conditions on the pattern $f$, Goldberg \cite{Goldberg_1960}
proved that
$$
 \inf_{n} \, \inf_{\xx}\frac{1}{n}\Csum_n[f](\xx)=
\lim_{n\to\infty} \, \inf_{\xx}\frac{1}{n}\Csum_n[f](\xx).
$$

Following Drinfeld's analysis \cite{Drinfeld_1971} of Shapiro's problem,
a satisfactory method to explore the asymptotics of the g.l.b.\ of sums
$\Csum_n[f](\xx)$
%best asymptotic ($n\to\infty)$ constant, following Drinfeld's analysis \cite{Drinfeld_1971} of Shapiro's problem,  
exists for pattern functions depending on $x_1$, $x_2$, $x_3$. Godunova and Levin \cite{GodLev_1973}, \cite{GodLev_1976}, \cite{GodLev_1982} applied it to a wide class of sums of the form \eqref{GodLev_sum} and to a few separate, special cases. See also \cite[\S~3.1]{Finch_2003}.

\iffalse
Godunova, Levin, MZ, 1973
%Е. К. Годунова, В. И. Левин, Уточнение
%некоторых циклических неравенств, Матем. заметки, 1973, том 14, выпуск 3, 305--316
$$
 f(x_1,x_2,x_3)=\left(\frac{a_1}{(a_2+a_3)/2}\right)^\lambda, \quad \lambda\geq 2
$$
--- ref. to Daykin[:10]

A class of functions with pattern
$$
f(x_1,x_2,x_3)=\phi(x_1/x_2, x_1/x_3).
$$
(Minimization of Daykin's sums reduces to these.)

Diananda proved Daykin's for $\lambda>(\sqrt{5}+1)/2$
(
Diananda, P. H. Some cyclic and other inequalities.
IV. Proc. Cambridge Philos. Soc. 76 (1974), 183.
)

P. H. Diananda (1962). Some Cyclic and Other Inequalities. Mathematical Proceedings of the
Cambridge Philosophical Society, 58, pp 425--427 doi:10.1017/S0305004100036653

denominator --- homo polynomial of degree 1.

%Е. К. Годунова, В. И. Левин, О точности
%нетривиальной оценки в одном циклическом неравенстве, Матем. заметки, 1976, том 20, выпуск 2, 203--205

$$
f(x_1, x_2, x_3)=\frac{a_1 x_1+a_2 x_2+a_3 x_3}{b_1x_2+b_2 x_3}
$$

%Е. К. Годунова, В. И. Левин, Нижняя оценка одной циклической суммы, Матем. заметки, 1982, том 32, выпуск 1, 3--7
$$
f(x_1, x_2, x_3)=\frac{ x_1+p x_1 x_2+q x_2^2}{x_2^2+ x_3^2}
$$
\fi
%%%%%%%%%%%%%%%%%%%%%%%%%%%%%%%%%%%%%%%%%

No method of strength comparable to Drinfeld's  --- that is, capable of providing the best lower bound for a family of suitably normalized rational cyclic sums, --- is presently known for patterns that depend on more than three variables or even for a pattern of the form $f(x_1,x_2,x_4)$. 
Even for a fixed $n$ analytical or numerical
minimization of a particular cyclic sum can be difficut; the review \cite{Clausing_1992} and the paper \cite{Ando_2013} provide good evidence.  

The sums $S_{n;k,1}$ were first considered by Diananda
\cite{Diananda_1959}. The best currently known estimate applicable to arbitrarily large values of $k$ and $n$ was obtained, using considerations similar to Drinfeld's, in the author's paper
\cite{Sadov_2016}:
$$
k(2^{1/k}-1)\leq \lim_{n\to\infty}\,\inf_{\xx}\,\frac{1}{n}S_{n;k,1}(\xx)
\leq \gamma_k,
$$ 
where $\gamma_k$ is the root of a certain transcendental equation. The sequence $(\gamma_k)$ monotonely decreases;
$\gamma_2\approx 0.98913$ is Drinfeld's constant
and $\lim_{k\to\infty}\gamma_k\approx 0.93050$. Note that
$k(2^{1/k}-1)\to\ln 2\approx 0.693$, so there is a significant gap between the lower and upper bounds.
A rigorously justified computational procedure for determination of the true value of the limit ($n\to\infty$) is not known even for $k=3$; neither it is known for the sums $S_{n;J,1}$ with $J=\{2,4\}$, very similar visually to Shapiro's. Plausible numerical results are presented in 
\cite[\S~7]{Boarder_Daykin_1973}. The mentioned ``simplest cases'' of the
%simplest cases of the 
challenge in the explicit form read as follows. 

\bigskip\noindent
{\bf Open problem.}
Find
(or estimate numerically, with justification) the limits
$$
\lim_{n\to\infty}\inf_{\xx} \frac{1}{n}\left(\frac{x_1}{x_2+x_3+x_4}+\frac{x_2}{x_3+x_4+x_5}+\dots+\frac{x_{n-1}}{x_n+x_1+x_2}+\frac{x_n}{x_1+x_2+x_3}\right)
$$
and
$$
\lim_{n\to\infty}\inf_{\xx} \frac{1}{n}\left(\frac{x_1}{x_2+x_4}+\frac{x_2}{x_3+x_5}+\dots+\frac{x_{n-1}}{x_n+x_2}+\frac{x_n}{x_1+x_3}\right).
$$
%To reiterate: the problem is (to the author's knowledge) open.

\smallskip
Daykin \cite{Daykin_1971} 
considered the function 
$$
 \Phi_k(\nu)=\inf_{\xx} S_{n;k,1/\nu} (\xx)
$$
with $k=2$ and showed that it is continuous and convex for
$\nu\in(0,+\infty)$
and that $\Phi_2(\nu)=n$ for $\nu\geq 2$.
Diananda \cite{Diananda_1973}, extending the research of Daykin, found that the functions $\Phi_k(\nu)$ with any $k\in\{2,3,\dots\}$ are continuous and convex for $\nu\in(0,+\infty)$, and that
\beq{DiaDay}
 \inf_{\nu>0} \Phi_k(\nu)=\lim_{\nu\to 0^+} \Phi_k(\nu)=
\left\lfloor\frac{n+k-1}{k}\right\rfloor.
\eeq
We will revisit this formula in Sec.~\ref{sec:max} in connection with  Theorem~\ref{thm:max-girth}, see \eqref{DiaDay2}.

Generalizing another Daykin's observation, Diananda  noted that $\Phi_k(\nu)\geq n$ when $\nu\leq 0$.
In \cite{Diananda_1974} he investigated the 
%positive 
threshold value $\nu_n=\inf\{\nu>0\mid \Phi_k(\nu)=n\}$, which bounds the region where the trivial estimate of the type \eqref{triv_est_Snkp} is valid, and proved that
$$
1.036<\sup_n \nu_n\leq\frac{\sqrt{5}+1}{2}.
$$

\smallskip
The discussion so far pertained to bounding cyclic sums from below.
In closing this section let us address the natural question: 
does the analogous {\it maximization}\ problem: to determine $\sup_{\xx} S_{n;J,p}(\xx)$ --- make sense?

\smallskip
%Suppose $p>0$.
%
If $1\notin J$, then, letting $x_1\to\infty$ while keeping $x_2,\dots,x_n$ fixed, we get
$\sup_{\xx} S_{n;J,p}(\xx)=\infty$.

\smallskip
The case $p=1$, $J=[a,b]$ with $a,b\in\ZZ$, $a\leq 1\leq b$
was studied by Baston \cite{Baston_1973}
who considered cyclic sums with pattern $M_{1,\ell}(x_1,\dots,x_\ell)/M_{1,k+\ell}(x_a,\dots,x_b)$, where $b=a+k+\ell-1$ and $a\leq 1\leq a+k$ (that is, $[1,\ell]\subset[a,b]$). 
%
%The analysis is simple but the answers are not immediately obvious.
%
Baston's result in the case $\ell=1$ reads:
%We use Baston's argument to obtain the result for an arbitrary $J$. 
%(and to correct an inaccuracy in \cite[Theorem~1]{Baston_1973} when $J=[a,\dots,b]$).
%
%\begin{proposition}

\smallskip
{\em If $a\leq 1\leq b$ and  $J=[a,b]$, then 
$$
 \frac{1}{n}\sup_{\xx}  S_{n;J,1}(\xx)\leq \frac{|J|}{1+\min(1-a,b-1)}.
$$
The equality occurs for infinitely many $n$.}

%\end{proposition}

\medskip
Beyond that, hardly anything is known about maximization of cyclic $p$-sums with $0<p<\infty$.
Yamagami \cite{Yamagami_1993} solved the maximization problem for a cyclic sum with weighted sum in the denominator of the pattern:
$$
  \max_{\xx}\Csum_n\left[\frac{x_1}{(s-m)x_1+\sum_{j\in J}x_{1+j}}\right](\xx)=\frac{n}{s},
$$
where $J=[\ell,\ell+m-1]$, $s\geq n$, $1\leq \ell\leq n-m$.
He conjectured that the result remains true for any set  $J\subset[1,n]$ of cardinality $m$ 

\smallskip
Our short review is not exhaustive; however, the author is not aware of any substantial results concerning power sums of Shapiro-Diananda type not mentioned or not referred to in the quoted references. Despite a seemingly elementary character of the questions,
the area remains wide open for systematic exploration as well as for casually trying various particular cases, possibly even as students' projects.\footnote{V.G.~Drinfeld was a 10th grade student working under supervision of Prof.~V.I~.Levin when he proved that $\lim_{n\to\infty} \inf_{\xx} n^{-1} S_{n;2,1}(\xx)=\gamma_2$.}

\section{Graphic sums}
\label{sec:grsum}

Let $\Omega$ be a finite set, $\pows(\Omega)$ its  power set, and $\Ps{\Omega}=\pows(\Omega)\setminus\{\emptyset\}$  
be the set of all nonempty subsets of $\Omega$. 

By a ``function with variable number of arguments''
we understand a functional symbol $\fname$, such as $\min$, $\max$, $\sumn$, that comprises a family of functions $\fname^{(k)}$, one for every arity $k=0,1,2,\dots$. For example, $\fname(x_1,x_2)=\fname^{(2)}(x_1,x_2)$.
%(the number  of scalar arguments). 
In the terminology of programming, $\fname$ can be viewed as a virtual function whose actual instance at the place of occurence is determined by the signature (here, the number $k$ of scalar variables).

We will consider only symmetric functions. Given 
a vector $\xx$ with index set $[n]$ and
a subset $\Omega=\{i_1,\dots,i_k\}\subset [n]$, we write 
$$
\fname(\xx|\Omega)=\fname(x_{i_1},\dots,x_{i_k}).
$$ 
This notation includes the case $\Omega=\emptyset$:
then the right-hand side is the constant $\fname^{(0)}()$ --- the instance of $\fname$ of arity zero.

\begin{remark} If $\fname$ is not necessarily symmetric,
then the above definition would make sense assuming that
$\Omega$ is an {\em ordered subset}\ of $[n]$.
\end{remark}

%\medskip\noindent
For our purposes it suffices to assume that always $\xx\geq 0$ (i.e.\ $x_1\geq 0,\dots,x_n\geq 0$) and that $\fname(\xx|\Omega)$ takes values in $\Rpc=[0,\infty]$.
Thus 
$$
(\xx,\Omega)\mapsto \fname(\xx|\Omega).
$$
is a map $\RplusC^n\times\pows([n])\to \Rpc$. The target set $\Rpc$ has natural linear order, the set  $\RplusC^n$ is partially ordered by 
coordinate-wise comparison:
$\xx\geq \tilde \xx\;\Leftrightarrow\; (\forall j\; x_j\geq \tilde  x_j)$, and the set $\pows([n])$ is partially ordered by set inclusion. Hence it makes sense to inquire whether $\fname(\cdot|\cdot)$ is monotone (order-preserving or order-reversing) 
% [cf. terminology: Stanley v.1, Sect.4.5]
separately in $\xx$ and $\Omega$.

We call $\fname$ {\it ascending}, resp., {\it descending}\ if $\fname(\xx|\cdot)$ is order-preserving, resp., order-reversing, with respect to the set argument for any fixed $\xx$.

The functions of primary interest to us will be
$\max$ (abbreviated as $\lor$) and $\min$ (abbreviated as $\land$)
with conventions
$$
\maxx{\xx}{\emptyset}=0,
\qquad
\minx{\xx}{\emptyset}=\infty.
$$
We will also consider the functions $\sump{p}$ (power sums of order $p$) and $\Mp{p}$ (power means of order $p$)
defined by the familiar expressions
$$
\sump{p}(x_1,\dots,x_k)=\left(\sum_{i=1}^k x_i^p\right)^{1/p}
$$
and
\beq{Mp_vs_sump}
 \Mp{p}(x_1,\dots,x_k)=\left(\frac{1}{k}\sum_{i=1}^k x_i^p\right)^{1/p} =k^{-1/p}\sump{p}(x_1,\dots,x_k).
\eeq
Here $p\in\RR\setminus\{0\}$. 

The $0$-ary instances are $\Mp{p}()=0$ for $p>0$ and 
$\Mp{p}()=+\infty$ for $p<0$. 

The functions $\lor$ and $\land$ are the limit cases:
$$
 \land=\sump{-\infty}=\Mp{-\infty},
\qquad
 \lor=\sump{+\infty}=\Mp{+\infty}.
$$

In the case $p=0$, which was excluded above, $\Mp{0}(x_1,\dots,x_k)$ can be naturally defined as the geometric mean $(x_1\cdot\dots\cdot x_k)^{1/k}$ with convention $\Mp{0}()=1$. The monotonicity in $p$ (implying continuity at $p=0$ if $k\geq 1$)
is thus ensured.

All these functions 
%with variable number of arguments 
($\fname=\sump{p}$, $\Mp{p}$, including $\land$ and $\lor$)
are  homogeneous of order one:
$$
\fname(\lambda\xx|\Omega)=\lambda\fname(\xx|\Omega),
\quad \forall \lambda>0.
$$
Also, they all are monotone (increasing) with respect to the $\xx$-argument:
$$
\xx\geq\xx'
\quad\Rightarrow\quad 
\fname(\xx|\Omega)\geq \fname(\xx'|\Omega).
$$
%where $\fname$ is any of 

Clearly, $\sump{p}$ is ascending if $p>0$ and descending if $p<0$. In particular, 
$\lor$ is ascending and $\land$ descending:
$$
\Omega\supset\Omega'
\quad\Rightarrow\quad 
\maxx{\xx}{\Omega}\geq \maxx{\xx}{\Omega'},
\quad
\minx{\xx}{\Omega}\leq \minx{\xx}{\Omega'}.
$$
The power means with $|p|<\infty$ are neither ascending nor descending.

\medskip
Given a symmetric function $\fname$ with variable number of arguments let us introduce the function of a vector argument $\xx\in\RplusC^n$, or equivalently, of the fixed number $n$ of nonnegative real variables, whose structure is modelled after the Shapiro-Diananda cyclic sums:
\beq{f-sum}
\xx \;\mapsto\; 
\sum_{i=1}^n \frac{x_i}{\fname(\xx|\Omega_i)}.
\eeq
Here $i\mapsto\Omega_i$ is a given assignment of nonempty subsets of $[n]$. It is natural to interpret such an assignment in terms of a directed graph (digraph).      

A {\it simple digraph}\ is a pair $\Ga=(\VV,\Ga^+)$ where $\VV=\VV(\Ga)$ is the {\it set of nodes}\ and the map $\Ga^+:\,\VV\to\pows(\VV)$ determines the {\it set of arcs}\
(directed edges) outgoing from each node.

The whole set of arcs of $\Ga$ is
$$
 \AA(\Ga)=\{(v,v')\in\VV\times\VV\mid v'\in \Ga^+(v)\}.
$$
We will also use the intuitive notation $v\to v'$ to denote the adjacency $v'\in\Ga^+(v)$.

The {\it outdegree}\ of a node $v$ is $d^+(v)=|\Ga^+(v)|$. 
%The number of vertices $n=|\VV|$ is called the {\it order}\ of $\Ga$.

Note that {\it loops} (i.e.\ arcs of the form $v\to v$) are allowed in a simple digraph, while multiple arcs are not.

In Section~\ref{sec:min} we will need digraphs that are not necessarily simple (multiple arcs are allowed).
Such a digraph is defined by specyfing its set of arcs $\AA$, the set of nodes $\VV$, and two adjacency functions $\AA \to \VV$: $\alpha$ (the beginning of arc) and $\beta$ (the end of arc). 

From now on, we write ``graph'' meaning ``simple digraph'' throughout, except in Sec.~\ref{sec:min}, where non-simple digraphs will appear in a well-defined context.

In the sum \eqref{f-sum}, we demanded the sets $\Omega_i$ to be nonempty. The reason is to avoid expressions that are nowhere finite: indeed, for $\fname=\sump{p}$ with
$p>0$, say, we have $\fname(\xx|\emptyset)=0$ and a summand in \eqref{f-sum} with
$\Omega_i=\emptyset$ would cause the sum to be infinite for any $\xx>0$. We want to avoid such a situation in the following definition; hence we introduce the notation
$\VV'(\Ga)=\{v\in\VV(\Ga)\mid d^+(v)>0\}$.

\begin{definition}
\label{def:grsums}
Let $\Ga$ be a graph and $\xx\in\RplusC^\VV(\Ga)$ be a vector
with components indexed by the nodes of $\Ga$. The {\it graphic $\fname$-sum associated with $\Ga$}\
is the function with values in $\Rpc$ defined by
\begin{equation}
\label{grsum-gen}
 S^{\Ga}_{\fname}(\xx)=\sum_{v\in\VV'(\Ga)}\frac{x_v}
{\fname(\xx|\Ga^+(v))}.
\end{equation}
The domain of $S^{\Ga}_{\fname}(\cdot)$ consists of vectors
$\xx\in\RplusC^n$ $(n=|\VV|)$,
called {\it admissible}, such that $\xx\neq 0$ and  ${\fname(\xx|\Ga^+(v))}\neq 0$ whenever $x_v=0$.
\end{definition}

We allow that $S^{\Ga}_{\fname}(\xx)=\infty\;$
for some $\xx$ but we forbid summands of the form ${0}/{0}$. 
The admissibility condition can be stated as the system of inequalities
$\max(x_v,\fname(\xx|\Ga^+(v)))>0$ for all $v\in\VV(\Ga)$.

For the functions $\fname$ considered earlier in this section,
every summand in the right-hand side of \eqref{grsum-gen}
is a homogeneous function of degree 0. Hence
$$
 S^{\Ga}_{\fname}(\lambda\xx)=S^{\Ga}_{\fname}(\xx),
\quad \forall\lambda>0.
$$

\smallskip
The introduced framework makes it possible to discuss
Shapiro-type problems from a wider perspective.

\medskip\noindent{\bf Generalized Shapiro's problem for graphic sums.}
{\em Given the graph $\Gamma$, find the greatest lower bound
\begin{equation}
\label{infSgen}
m^{\Ga}_{\fname}=\inf_{\xx} S^{\Ga}_{\fname}(\xx)
\end{equation}
over the set of admissible vectors. }

\medskip
Apart from some simple transformations (see end of this section and Sec.~\ref{sec:strong}), this problem is too broad,
and a specialization is needed to enable a meaningful treatment. 

%When $\fname=\lor$, we will use the term {\it max-sum}, omitting the implicit adjective {\it graphic}.
%Similarly, graphic sums with $\fname=\land$ will be
%called {\it min-sums}.

\medskip\noindent
{\bf Remark} concerning maximization of $S^\Ga_\fname(\xx)$.
One can similarly state a general maximization problem, replacing '$\inf$' in the right-hand side of \eqref{infSgen} by '$\sup$'. We cited Baston's result 
concerning cyclic sums at the end of section~\ref{sec:cycsum}. Similarly to the subcase $1\notin J$ in the case of cyclic sums, the general problem is trivial if $v\notin\Ga^+(v)$ for some $v\in\VV(\Ga)$: then 
%Indeed, making $x_v$ arbitrarily large, we obtain that 
$\sup_\xx S^\Ga_\fname(\xx)=+\infty$. 
If $v\in\Ga^+(v)$ for all $v$, then the problem is still trivial for $\fname=\lor$ or $\land$. 
Indeed, $x_v/\lor(\xx|\Ga^+(v))\leq 1$, so $S^\Ga_\lor(\xx)\leq |\VV(\Ga)|$. Taking $\xx=(1,1,\dots,1)$, we see that 
$\max_{\xx} S^\Ga_\lor(\xx)= |\VV(\Ga)|$.
Finally, the only case where
$\sup_\xx S^\Ga_\land(\xx)$ is finite occurs when
$\Ga^+(v)=\{v\}$ for all $v\in\VV(\Ga)$. Then, obviously,
$S^\Ga_\land(\xx)\equiv|\VV(\Ga)|$.

%One example concerning maximization of a graphic sum with a non-symmetric function $\fname$ found in the literature
%, with a weighted sum in the denominator of the pattern, 
%is Yamagami's conjecture, see end of \cite{Yamagami_1993}. There, $\fname^{(m)}(\yy)=(n-m)y_1+y_2+\dots+y_{m+1}$ (depends on $n$).

%%%%%%%%%%%%%%%%%%%
\iffalse
If for at least one $i$ we have $\emptyset\neq\Ga^+(i)\not\ni i$, then letting $x_i\to \infty$ we get
$\sup_{\xx} S^{\Ga}_{\fname}(\xx) =\infty$. Hence, in a meaningful problem every vertex of $\Ga$ must have a loop.
Even then, assuming that $\fname$ is ascending and $\inf_{\yy\in\Rplus^k}y_1^{-1}\fname(\yy)=0$ for all $k\geq 2$ (as is the case for the power sums with $p<0$), the presence of any vertex with $|\Ga^+(i)|>1$ implies $\sup_{\xx} S^{\Ga}_{\fname}(\xx) =\infty$. 

Maximization problem for {\it cyclic, interval}\ sum-sums ($\fname=\sumn$, $\Ga^+(i)=[i-a,i+b]\cap\ZZ$, $a\geq 0$, $b\geq 0$,
arithmetic modulo $n$) is covered by Baston's results \cite{Baston_1973}. The analysis is simple but the answers are not immediately obvious.
Beyond that, hardly anything is known about maximization of $\fname$-sums with $\fname=\sumn_p$, $0<p<\infty$.

For $p=\infty$, i.e.\ for max-sums, the maximization problem is
again completely trivial. Indeed, $\Ga^+(i)\ni i$ implies that all
summands in \eqref{grsum-gen} are bounded above by $1$ and they are equal to $1$ when all $x_i=1$.
We conclude that $\sup\Smax{\Ga}{\xx}=\max\Smax{\Ga}{\xx}=n$. 
\fi
%%%%%%%%%%%%%%%%%%%

\medskip
``There is nothing new under the sun''. 
% [Eccl 1:9]
 Already Daykin \cite{Daykin_1971} considered functions that in our notation are characterized as $S^\Ga_{\sump{1}}(\xx)$ for an out-regular%
\footnote{as defined in \cite{Choudum_1972}}
graph $\Ga$ (that is, $|\Ga^+(v)|=d$, the same for all $v\in\VV(\Ga)$).

The cyclic power sums of Shapiro-Diananda type
\eqref{SDp_gsum} make up a special case of $\fname^\Ga_{\Mp{p}}(\xx)$ where $\Ga$ has an authomorphism that cyclically permutes the nodes. Generally, if the graph $\Ga$ is out-regular, then $\fname^\Ga_{\Mp{p}}$
and $\fname^\Ga_{\sump{p}}$ differ just by a constant factor, which tends to $1$ as $|p|\to\infty$. 

\smallskip
In Definition~\ref{def:grsums} we have put the function $\fname$ in the denominator following the tradition of Shapiro's problem. 
One may want to flip the summed fractions upside down, but the so obtained ``graphic sums of the second kind'' will not widen the scope of the theory. 

Consider the {\it reciprocal function}\ $\widetilde{\fname}$ defined (for $\xx>0$) by
$$
\widetilde{\fname}(x_1,\dots,x_k)=\frac{1}{\fname(x_1^{-1},\dots,x_k^{-1})}.
$$ 
For $\xx>0$, writing $\xx^{-1}=(x_1^{-1},\dots,x_n^{-1})$, we have
$$
\sum_{v\in\VV'(\Ga)}\frac{\widetilde{\fname}(\xx|\Ga^+(v))}{x_v}=
S^{\Ga}_{\fname}(\xx^{-1}).
$$
If $\fname$ is homogeneous of order one, then so is  $\widetilde{\fname}$. If $\fname$ is ascending, then $\widetilde{\fname}$ is descending and conversely. The transformation $\fname\to\widetilde{\fname}$ does not change the type of monotonicity with respect to $\xx$.

Thus we see that $m^{\Ga}_{\fname}$ can be expressed in terms of the ``sum of the second kind'' with reciprocal function, viz.,
\begin{equation}
\label{infSdual}
m^{\Ga}_{\fname}=
\inf_{\xx>0}\sum_{v\in\VV'(\Ga)}\frac{\widetilde{\fname}(\xx|\Ga^+(v))}{x_v}.
\end{equation}

\iffalse
The only possible inconvenience of using formula
\eqref{infSdual} or the restricted domain in \eqref{infSgen}
is that the infimum may be attained on an admissible vector with some $x_i=0$ but not attainable on $\Rplus^n$.
For example, take $S(x_1,x_2)=x_1/\max(x_1,x_2)$. Then any vector
$(x_1,x_2)$ with $\max(x_1,x_2)>0$ is admissible. So $\inf S=\min S=0$ is attained on vectors $(0,x_2)$, $x_2>0$, but they do not lie in $\Rplus^2$. 

We will use both domains, the set of admissible vectors and $\Rplus^n$, as convenient.  
\fi
%%%%%%%%%%%%%%%%%%%%%%%%%%%%%%%%%

%\smallskip
Due to the reciprocity relation
$
\widetilde{\land}=\lor
$
(more generally, $\widetilde{\sump{p}}=\sump{-p}$ and $\widetilde{\Mp{p}}=\Mp{-p}$),   
the greatest lower bounds of graphic max- and min-sums can be represented in two ways,
$$
\begin{array}{l}
\displaystyle
\mmax{\Ga}=\inf_{\xx>0}\sum_{v\in\VV'(\Ga)} \frac{x_v}{\maxx{\xx}{\Ga^+(v)}}
=\inf_{\xx>0}\sum_{v\in\VV'(\Ga)} \frac{\minx{\xx}{\Ga^+(v)}}{x_v},
\\[3ex]
\displaystyle
\mmin{\Ga}=\inf_{\xx>0}\sum_{v\in\VV'(\Ga)} \frac{x_v}{\minx{\xx}{\Ga^+(v)}}
=\inf_{\xx>0}\sum_{v\in\VV'(\Ga)} \frac{\maxx{\xx}{\Ga^+(v)}}{x_v}.
\end{array}
$$
%In the first infima in each line we can either allow $\xx>0$
%(the restricted domain) or any admissible vector $\xx\geq 0$.

\section{Strong reduction}
\label{sec:strong}

In this section we show that the problem of determining the lower bound \eqref{infSgen} for a general graph $\Ga$ reduces to the case of a strongly connected graph assuming $\fname$ satisfies certain natural conditions.
The corresponding {\em formulas of strong reduction}\ are stated in Propositions~\ref{prop:strongred-max} and~\ref{prop:strongred-min} below.

\smallskip
Let $\Ga$ be a simple digraph and $\VV=\VV(\Ga)$. The binary relation
$\succeq\in\VV\times\VV$ (written customarily as $v\succeq v'$ instead of $\succeq(v,v')$) is defined by:
$$
 v\succeq v' 
\;\;\mbox{\rm if $v=v'$ or there exists a (directed) path from $v'$ to $v$}.
$$
The symmetrization of this relation is
$$
 v\eqnod v' \quad\Leftrightarrow \quad v\succeq v'
\;\;\mbox{\rm and}\;\; v'\succeq v.
$$

Let $\VV_1,\dots,\VV_k$ be the equivalence classes of $\VV$ by $\eqnod$.
 The induced subgraphs
$\Ga_i$ with $\VV(\Ga_i)=\VV_i$ are called 
the {\em strong components} of $\Ga$.%
\footnote{Another symmerization of the binary relation $\succeq$ whose equivalence classes are {\em weakly connected components}\ is: 
$ v\stackrel{w}{\eqnod} v' \quad\Leftrightarrow \quad v\succeq v'
\;\;\mbox{\rm or}\;\; v'\succeq v$.
}

The graph $\Ga$ is {\em strongly connected}\ if $k=1$.

The quotient set $\Ga/\eqnod$ is partially ordered by the
relation $\succ$:
$$
\Ga'\succ\Ga''\quad\mbox{\rm if $\Ga'\neq\Ga''$ and
($v'\in\VV(\Ga')$, $v''\in\VV(\Ga'')$) $\Rightarrow$
$v'\succeq v''$.}
$$

The graph $\conds{\Ga}$, called the {\em condencation of $\Ga$}, is obtained from $\Ga$ by collapsing every strong component into a single node. It may contain loops.
The graph $\conds{\Ga}_*$ obtained from $\conds{\Ga}$
by removing loops contains the Hasse diagram of the partial order $\succ$ as a subgraph.

Strong components that are maximal with respect to $\succ$
are called the {\em final strong components}. The set of such strong components (a subset of $\VV(\conds{\Ga})$) will be denoted $\Max\,\conds{\Ga}$
%$\omega(\conds{\Ga})$
 and their union 
(a subgraph of $\Ga$) will be denoted $\lim\Ga$.

By definition, there are no arcs between different components of $\lim\Ga$ and the graph $\conds{\Ga}_*$ is acyclic.

\smallskip
%Let us introduce the notion of height (of a component, a node or a graph) and the operation of partial rescaling of a vector, which will be the main technical device in the proofs concerning the strong reduction.

Let us introduce the notion of height (of a component, a node or a graph), which will be instrumental in the proofs concerning the strong reduction.

The {\em height of a component $\Ga_i\in\VV(\conds{\Ga})$}, to be denoted $h(\Ga_i)$, 
is the maximum length of a chain connecting $\Ga_i$ to $\Max\,\conds{\Ga}_*$. In particular, if $\Ga_i\in\Max\,\conds{\Ga}$,
then $h(\Ga_i)=0$.

The {\em height of a node $v\in\VV(\Ga)$}, denoted $h(v)$, is the height of the component containing it.

The {\em height of the graph $\Ga$}\ is $H(\Ga)=\max_{v\in\VV(\Ga)} h(v)$.

%\smallskip
%Let $\xx\in\Rplus^{\VV}$ be a vector with components indexed by the nodes of the graph $\Ga$.
%For a given $t>0$ and $\Omega\subset\VV$ we define the {\em $(t,\Omega)$-rescaling, $\xx^{(t|\Omega)}$,
%of $\xx$}: it is the vector with components
%$$
% x^{(t|\Omega)}_v=\begin{cases} x_v\;\;\mbox{\rm if $v\notin\Omega$};\\
%t^{h(v)} 
%t x_v\;\;\mbox{\rm if $v\in\Omega$}.
%\end{cases}
%$$

\begin{definition}
Let $\fname$ be a function with variable number of arguments, homogeneous of order $1$.
We say that $\fname$ is a {\em function of max-type}\ if
for $0\leq m\leq k-1$ and any $\xx>0$
$$\sup_{t>0}\fname^{(k)}(x_1,\dots,x_m,tx_{m+1},\dots,tx_k)=+\infty.
$$ 
Equivalently,
$$\sup_{\eps>0}\eps^{-1} \fname^{(k)}(\eps x_1,\dots,\eps x_m,x_{m+1},\dots,x_k)=+\infty.
$$

%$\fname(t\xx)\to+\infty$ as $t\to+\infty$ for any $\xx\in\Rplus^k$.

We say that  $\fname$ is a {\em function of min-type}\ if $\fname$ is descending and
for $0\leq m<k$ and any $\xx>0$
$$\lim_{t\to+\infty}
\fname^{(k)}(x_1,\dots,x_m,tx_{m+1},\dots,tx_k)=  \fname^{(m)}(x_1,\dots,x_m).$$
Equivalently,
$$\lim_{\eps\to 0^+}\eps^{-1} \fname^{(k)}(\eps x_1,\dots,\eps x_m,x_{m+1},\dots,x_k)= \fname^{(m)}(x_1,\dots,x_m).
$$ 
\end{definition}

Recall that $\fname^{(k)}$ denotes the $k$-ary representative of the family of functions with common symbol $\fname$. Since we assume that $\fname$ is symmetric, the $m$ out of $k$ distinguished arguments of $\fname^{(k)}$ in the definition may occupy any positions.

The terminology is defined so as to naturally describe
$\fname=\lor$ as a function of max-type and $\fname=\land$
as a function of min-type. More generally,
the functions $\sump{p}$ and $\Mp{p}$ are of max-type
when $p>0$, as well as $\fname=\Mp{0}$ (the geometric mean). The functions $\sump{p}$, $p<0$, are of min-type.
%Therefore, the value of $m_\fname^\Ga$ in the former case
%is the sum of the values for the final strong components of $\Ga$,
%while in the latter case it is the sum of the values for all strong components of $\Ga$.
%

\begin{remark} If there exist positive constants $c_1,c_2,\dots$ such that $\fname^{(k)}=c_k \mathfrak{g}^{(k)}$,
and one of the functions $\fname$ or $\mathfrak{g}$ 
is of max-type, then so is the other (e.g.\ $\sump{p}$ and $\Mp{p}$, $p>0$).

%It is easy to see that if $\fname$ is an ascending function, homogeneous of order 1 and such that $\fname^{(1)}(1)>0$, then $\fname$ is a function of max-type. We do not have a similarly simple sufficient condition for functions of min-type.

The functions $\Mp{p}$ with $-\infty<p<0$, not being descending,
do not fall in the scope of the above definitions and are thus left out of our analysis of strong reduction.
\end{remark}

\iffalse
\noindent
{\bf Example.}
$$
 S^{\Ga}_{\fname}(x_1,x_2,x_3,x_4)=\frac{x_1}{\Mp{-1}(x_2,x_3)}+\frac{x_2}{\Mp{-1}(x_1,x_3,x_4)}+\frac{x_3}{\Mp{-1}(x_4)}
$$
Explicitly:
$$
 S^{\Ga}_{\fname}(x_1,x_2,x_3,x_4)=\frac{x_1(x_2+x_3)}{2x_2 x_3}+\frac{x_2(x_1x_3+x_1x_4+x_3x_4)}{3x_1 x_3 x_4}+\frac{x_3}{x_4}
$$
and for $\Ga_1$ with $\VV(\Ga_1)=\{v_1,v_2\}$
$$
S^{\Ga_1}_{\fname}(x_1,x_2)=\frac{x_1}{\Mp{-1}(x_2)}+\frac{x_2}{\Mp{-1}(x_1)}=\frac{x_1}{x_2}+\frac{x_2}{x_1}.
$$
Here
$$
 m^{\Ga}_{\fname}\leq
\frac{2}{\sqrt{6}},
\qquad
\sum_{\Ga_i\in\VV(\conds{\Ga})} m^{\Ga_i}_{\fname}=
2+0+0.
$$
\fi

\begin{proposition}
\label{prop:strongred-max}
Let $\fname$ be a function of max-type. Then
$$
 m^{\Ga}_{\fname}=m^{\lim\Ga}_{\fname}=
\sum_{\Ga_i\in\Max\,\conds{\Ga}} m^{\Ga_i}_{\fname}.
$$
\end{proposition}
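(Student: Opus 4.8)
The plan is to prove the two equalities separately, working from the right. For the second equality, $m^{\lim\Ga}_{\fname}=\sum_{\Ga_i\in\Max\,\conds{\Ga}} m^{\Ga_i}_{\fname}$, observe that $\lim\Ga$ is, by definition, the disjoint union of the final strong components with no arcs between them. Hence for every node $v$ in a component $\Ga_i$ we have $\Ga^+(v)\subset\VV(\Ga_i)$, so the summand $x_v/\fname(\xx\mid\Ga^+(v))$ depends only on the coordinates of $\xx$ restricted to $\VV(\Ga_i)$. Therefore $S^{\lim\Ga}_{\fname}(\xx)$ splits as a sum of the $S^{\Ga_i}_{\fname}$ over independent blocks of variables, and the infimum of a sum over independent variable groups is the sum of the infima; this gives the second equality. (A small point to check: the admissibility conditions also decouple across blocks, so a tuple is admissible for $\lim\Ga$ iff its restriction to each $\VV(\Ga_i)$ is admissible for $\Ga_i$; scaling the blocks independently by homogeneity of degree $0$ causes no interference.)

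The first equality, $m^{\Ga}_{\fname}=m^{\lim\Ga}_{\fname}$, is the substantive part. The inequality $m^{\Ga}_{\fname}\le m^{\lim\Ga}_{\fname}$ should follow by a limiting construction: start from a near-optimal admissible $\yy$ for $\lim\Ga$ and extend it to all of $\VV(\Ga)$ by assigning to each node $v$ of height $h(v)$ a value of order $\eps^{h(v)}$ times a fixed positive scale, where $\eps\to 0^+$. Since $\fname$ is of max-type, for a node $v$ outside $\lim\Ga$ the term $x_v/\fname(\xx\mid\Ga^+(v))$ can be driven to $0$: $v$ has at least one out-neighbour of strictly smaller height (strictly larger coordinate scale), so after dividing numerator and denominator by the scale of $v$, the $\sup$-condition in the definition of max-type forces $\fname(\xx\mid\Ga^+(v))/x_v\to\infty$. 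Meanwhile the terms coming from nodes inside $\lim\Ga$ converge to the corresponding terms of $S^{\lim\Ga}_{\fname}(\yy)$ because their out-neighbourhoods lie entirely within $\lim\Ga$ and the extra coordinates we introduced are not seen by them. Carefully organizing this by induction on height $H(\Ga)$, peeling off one height layer at a time, yields $\limsup_{\eps\to0} S^{\Ga}_{\fname}(\xx_\eps)\le S^{\lim\Ga}_{\fname}(\yy)$, hence $m^{\Ga}_{\fname}\le m^{\lim\Ga}_{\fname}$.

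For the reverse inequality $m^{\Ga}_{\fname}\ge m^{\lim\Ga}_{\fname}$, the idea is that the terms associated with the final strong components are already present in $S^{\Ga}_{\fname}$ and the remaining terms are nonnegative, so it suffices to bound the "final part" below by $m^{\lim\Ga}_{\fname}$. Concretely, given any admissible $\xx$ for $\Ga$, discard all summands for $v\notin\lim\Ga$ (they are $\ge 0$); what remains is exactly $S^{\lim\Ga}_{\fname}$ evaluated at the restriction of $\xx$ to $\VV(\lim\Ga)$ — again because out-neighbourhoods of final-component nodes stay inside $\lim\Ga$ — which is $\ge m^{\lim\Ga}_{\fname}$ by definition, provided that restriction is admissible and nonzero. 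The only gap is the degenerate case where the restriction of $\xx$ to $\lim\Ga$ is the zero vector; but then homogeneity lets us rescale, or one argues that in that case some discarded term blows up, so the bound holds trivially. Handling this degeneracy cleanly, and making the induction on height in the first inequality fully rigorous (in particular verifying admissibility of the perturbed tuples $\xx_\eps$ for all small $\eps$), is where I expect the main friction; the structural content is otherwise straightforward.
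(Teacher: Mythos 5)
Your overall strategy is the same as the paper's: the second equality comes from the disjoint-union decomposition of $\lim\Ga$ over its components (the paper's base case of an induction on height), the inequality $m^{\Ga}_{\fname}\ge m^{\lim\Ga}_{\fname}$ comes from discarding the nonnegative terms of the non-final nodes, and the inequality $m^{\Ga}_{\fname}\le m^{\lim\Ga}_{\fname}$ comes from a scaling construction that uses the max-type property to kill the non-final terms (the paper peels one height layer at a time with a single factor $\eps$; you propose the simultaneous grading $x_v= c_v\eps^{h(v)}$). The parts you call routine are indeed fine.

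The genuine gap is in the key limiting step. Your claim that every node $v$ with $h(v)\ge 1$ ``has at least one out-neighbour of strictly smaller height'' is false: only the strong \emph{component} of $v$ is guaranteed to send an arc to a lower component, while the particular node $v$ may have all of $\Ga^+(v)$ inside its own non-final component. Concretely, take $\VV(\Ga)=\{1,2,3\}$ with arcs $1\to2$, $2\to1$, $2\to3$, $3\to3$; then $\{1,2\}$ is a non-final component of height $1$, $\lim\Ga=\{3\}$, and $\Ga^+(1)=\{2\}$ lies entirely at height $1$. With $x_v=c_v\eps^{h(v)}$ the term of node $1$ equals $\eps c_1/\fname(\eps c_2)=c_1/\fname(c_2)$ by homogeneity, a positive constant, so $\limsup_{\eps\to0}S^{\Ga}_{\fname}(\xx_\eps)$ exceeds $S^{\lim\Ga}_{\fname}(\yy)$ by a fixed amount and the inequality $m^{\Ga}_{\fname}\le m^{\lim\Ga}_{\fname}$ does not follow. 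To repair this you need a grading strictly finer than height, in which every node outside $\lim\Ga$ genuinely has an out-neighbour of strictly lower grade --- for instance the length of a shortest directed path from $v$ to $\lim\Ga$, i.e.\ exactly the layering $V_0,V_1,\dots$ used in the proof of Theorem~\ref{thm:max-girth}; then each non-final term has at least one unscaled (or less-scaled) argument in its denominator and the max-type property together with monotonicity in $\xx$ drives it to $0$ (this is immediate for $\fname=\lor$ and the power sums with $p>0$, though it still needs a little care for an arbitrary max-type $\fname$). In fairness, the paper's own induction step has the same soft spot: it fixes $\xx$ and multiplies the whole top height layer by one $\eps$, and the displayed claim that the infimum over $\eps$ of the top-layer subsum is $0$ fails in the example above for the same reason; the extra freedom of choosing the ratios of the $x_v$ within a layer, or equivalently the shortest-path grading, is needed there as well.
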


\begin{proof}
We will prove the proposition by induction on the height $H$ of $\Ga$.

%The base case $H=0$is trivial.
1. In the base case $H=0$ we have $\Ga=\lim\Ga$ by definition. If $\Ga_1,\dots,\Ga_k$ are the components of $\Ga$, then for $v\in\VV(\Ga_i)$ we have
$\Ga^+(v)=\Ga_i^+(v)$. Also, the space $\RR^{\VV(\Ga)}$
is the direct sum of the spaces $\RR^{\VV(\Ga_i)}$, so that any vector $\xx\in\RR^{\VV(\Ga)}$ can be decomposed as
 $\xx=\sum_{i=1}^k \xx^{(i)}$,
$\xx^{(i)}=(x_v)_{v\in\VV(\Ga_i)}$. Therefore  
$$
S^{\Ga}_\fname(\xx)
=\sum_{i=1}^k \sum_{v\in\VV'(\Ga_i)}
\frac{x_v}{\fname(\xx^{(i)}|\Ga_i^+(v))}=\sum_{i=1}^k
S^{\Ga_i}_\fname(\xx^{(i)}).
$$
The minimization over $\xx$ amounts to the minimization over each of the $\xx^{(i)}$ independently.
Hence $m^{\Ga}_{\fname}=\sum_{i} m^{\Ga_i}_{\fname}$.

2. For the induction step, suppose that $H\geq 1$ and $\hat\Ga$ is the graph of height $H-1$
obtained from $\Ga$ by deletion of all components of
height $H$. Clearly, $\lim\hat\Ga=\lim\Ga$.  Hence, by the inductive assumption, 
$m^{\lim\Ga}_{\fname}=m^{\hat\Ga}_{\fname}$.
%the proposition is true for $\hat\Ga$.

The inequality $m^{\Ga}_{\fname}\geq m^{\hat\Ga}_{\fname}$ is obvious, since
$\VV(\hat\Ga)\subset\VV(\Ga)$ and $\hat\Ga^+(v)=\Ga^+(v)$ for any $v\in\VV(\hat\Ga)$.
We have to prove that
$m^{\Ga}_{\fname}\leq m^{\hat\Ga}_{\fname}$.

Let us fix a vector $\xx\in\RR_+^{\VV(\Ga)}$, introduce the parameter $\eps>0$ and define the vector
$\xx'(\eps)$ whose components are
\beq{x-partial-rescale}
 x'_v(\eps)=\begin{cases}
   x_v\;\;\mbox{\rm if $v\in\VV(\hat\Ga)$},\\ %$h(v)<H$
   \eps x_v\;\;\mbox{\rm if $v\in\VV(\Ga)\setminus\VV(\hat\Ga)$}. %$h(v)=H$
\end{cases}
\eeq
We have $\Ga^+(v)={\hat\Ga}^+(v)$ for any $v\in\VV(\hat G)$, so
\beq{redsum-inv}
 \sum_{v\in\VV'(\hat\Ga)}\frac{x'_v(\eps)}{\fname(\xx'(\eps)\mid\Ga^+(v))}
=\sum_{v\in\VV'(\hat\Ga)}\frac{x_v}{\fname(\xx\mid{\hat\Ga}^+(v))}.
\eeq
From the definition of functions of max-type it follows that 
$$
 \inf_{\eps>0}\sum_{v\in\VV'(\Ga)\setminus\VV(\hat\Ga)}\frac{x'_v(\eps)}{\fname(\xx'(\eps)\mid\Ga^+(v))}=0.
$$
We conclude that
$$
 \inf_{\eps>0} S^{\Ga}_{\fname}(\xx'(\eps))=
S^{\hat\Ga}_{\fname}(\hat\xx),
$$
where $\hat\xx$ is the truncation of the vector $\xx$ containing only the components with indices from $\VV(\hat\Ga)$.
Therefore $m^{\Ga}_{\fname}\leq m^{\hat\Ga}_{\fname}$.
\end{proof}

\begin{proposition}
\label{prop:strongred-min}
Let $\fname$ be a function of min-type. 

(a) If $\hat\Ga$ is a subraph of $\Ga$ 
{\em(that is, $\VV(\hat\Ga)\subset\VV(\Ga)$ and ${\hat\Ga}^+(v)\subset\Ga^+(v)$ for any $v\in\VV(\hat\Ga)$)}, then
$$
m^{\Ga}_{\fname}\geq m^{\hat\Ga}_{\fname}.
$$

(b) For any graph $\Ga$
$$
 m^{\Ga}_{\fname}=
\sum_{\Ga_i\in\VV(\conds{\Ga})} m^{\Ga_i}_{\fname}.
$$
\end{proposition}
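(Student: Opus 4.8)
\textbf{Proof strategy for Proposition~\ref{prop:strongred-min}.} The plan is to mimic the inductive structure of the proof of Proposition~\ref{prop:strongred-max}, but paying attention to the fact that for a function of min-type the roles of ``final'' and ``initial'' components are swapped, and crucially \emph{every} strong component contributes (not just the final ones). Part~(a) is the easy ingredient: if $\hat\Ga$ is a subgraph of $\Ga$, then for each $v\in\VV(\hat\Ga)$ we have $\hat\Ga^+(v)\subset\Ga^+(v)$, and since a function of min-type is by definition descending, $\fname(\xx\mid\hat\Ga^+(v))\geq\fname(\xx\mid\Ga^+(v))$ for every $\xx$; dropping the summands with $v\notin\VV'(\hat\Ga)$ and using $x_v\geq 0$ only decreases the sum further. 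Hence $S^{\hat\Ga}_\fname(\xx)\leq S^\Ga_\fname(\xx)$ on the restricted vector, and after taking infima $m^\Ga_\fname\geq m^{\hat\Ga}_\fname$. (One should be slightly careful that the restriction of an admissible vector for $\Ga$ is admissible for $\hat\Ga$, or simply argue on strictly positive vectors via the reciprocal representation \eqref{infSdual}.)

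For part~(b), I would again induct on the height $H(\Ga)$. The base case $H=0$ is identical to the max-type argument: $\Ga=\lim\Ga$ is a disjoint union of its strong components $\Ga_1,\dots,\Ga_k$, the variable set decomposes as a direct sum, $S^\Ga_\fname$ splits as $\sum_i S^{\Ga_i}_\fname(\xx^{(i)})$, and minimizing coordinate-block by coordinate-block gives $m^\Ga_\fname=\sum_i m^{\Ga_i}_\fname$. For the induction step I would this time peel off the components of height $H$ from the \emph{bottom} — equivalently, let $\hat\Ga$ be obtained from $\Ga$ by deleting all strong components of height $H$ (the ones farthest from $\Max\,\conds\Ga$). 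Then $\VV(\conds{\hat\Ga})=\VV(\conds\Ga)\setminus\{\text{height-}H\text{ components}\}$, so by the inductive hypothesis $m^{\hat\Ga}_\fname=\sum_{\Ga_i\notin\{\text{ht }H\}} m^{\Ga_i}_\fname$, and what remains to prove is
\[
 m^\Ga_\fname = m^{\hat\Ga}_\fname + \sum_{\Ga_i:\,h(\Ga_i)=H} m^{\Ga_i}_\fname .
\]

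The inequality ``$\geq$'' here is the delicate direction. I would establish it by the rescaling trick dual to the one in Proposition~\ref{prop:strongred-max}: fix $\xx>0$ on $\VV(\Ga)$ and form $\xx'(t)$ by multiplying the coordinates on the height-$H$ components by $t$ and \emph{sending $t\to+\infty$}. For a node $v$ in a surviving component $\Ga_i$ (height $<H$), all of $\Ga^+(v)$ lies in $\hat\Ga$ — here I need the structural fact that arcs only go ``downward'' in the $\succ$-order, so a node of height $<H$ has no out-neighbour of height $H$ (indeed out-neighbours have height $\le h(v)<H$) — hence that summand is unchanged in the limit. For a node $v$ in a height-$H$ component $\Ga_i$, its out-neighbours split into those inside $\Ga_i$ (rescaled by $t$) and possibly some in lower components (not rescaled); the min-type property is exactly the statement that $t^{-1}\cdot 0$... more precisely, $\fname(\xx'(t)\mid\Ga^+(v)) = \fname(tx\text{'s and }x\text{'s})$, and dividing the numerator $tx_v$ by this, the min-type limit $\lim_{t\to\infty}\fname(x_1,\dots,x_m,tx_{m+1},\dots,tx_k)=\fname(x_1,\dots,x_m)$ shows the ratio tends to $x_v\big/\bigl(t^{-1}\fname(\dots)\bigr)$... cleaner: using the ``equivalently'' form with $\eps=1/t$, $t^{-1}\fname(\xx'(t)\mid\Ga^+(v))\to\fname(\xx\mid\Ga_i^+(v))$, so $x'_v(t)/\fname(\xx'(t)\mid\Ga^+(v)) = tx_v/\fname \to x_v/\fname(\xx\mid\Ga_i^+(v))$. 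Thus $\lim_{t\to\infty}S^\Ga_\fname(\xx'(t)) = S^{\hat\Ga}_\fname(\hat\xx) + \sum_{h(\Ga_i)=H}S^{\Ga_i}_\fname(\xx^{(i)})$, and taking infima over $\xx$ (the two blocks being independent) yields $m^\Ga_\fname\le m^{\hat\Ga}_\fname+\sum_{h(\Ga_i)=H}m^{\Ga_i}_\fname$. Wait — that is the ``$\le$'' direction. The reverse ``$\ge$'': from part~(a), $m^\Ga_\fname\ge m^{\hat\Ga}_\fname$ since $\hat\Ga\subset\Ga$; and separately $m^\Ga_\fname\ge m^{\Ga_i}_\fname$ for each strong component $\Ga_i$ by part~(a) again. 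To get the full sum on the right I would instead argue: restrict any admissible $\xx$ and observe that $S^\Ga_\fname(\xx)\ge\sum_i S^{\Ga_i}_\fname(\xx^{(i)})$ because each summand of $S^\Ga_\fname$ indexed by $v\in\VV'(\Ga_i)$ has $\Ga^+(v)\supseteq\Ga_i^+(v)$, so its denominator (a descending function) is $\le\fname(\xx^{(i)}\mid\Ga_i^+(v))$, making the fraction $\ge$ the corresponding fraction of $S^{\Ga_i}_\fname$; summing over all components and all such $v$ (every $v\in\VV'(\Ga)$ appears since $\Ga_i^+(v)=\Ga^+(v)\cap\VV(\Ga_i)$ may be empty only if... — one must check $v\in\VV'(\Ga)$ need not have $\Ga_i^+(v)\ne\emptyset$, in which case that term is absent from $S^{\Ga_i}_\fname$ and the inequality still holds since we only added a nonnegative fraction). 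Taking the infimum gives $m^\Ga_\fname\ge\sum_i\inf S^{\Ga_i}_\fname(\xx^{(i)}) = \sum_i m^{\Ga_i}_\fname$, which combined with the matching upper bound from the rescaling argument closes the induction.

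\textbf{Main obstacle.} The subtle point, and the one I would treat most carefully, is the handling of out-neighbours of a height-$H$ node that fall into strictly lower components: one must verify that the min-type limit still applies when the ``unscaled'' arguments form a proper nonempty subset of $\Ga^+(v)$ and that the limiting denominator is exactly $\fname(\xx\mid\Ga_i^+(v))$ — i.e.\ that the surviving (scaled) arguments are precisely those within the node's own strong component, which in turn rests on the observation that within a strong component every out-neighbour that stays in that component is accounted for, while downward arcs get killed in the limit. Secondarily, one must be careful about admissibility/boundary effects (some $x_v=0$), which is cleanly avoided by working with the reciprocal representation \eqref{infSdual} on $\xx>0$ throughout.
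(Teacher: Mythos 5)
Your overall architecture matches the paper's: part (a) is proved exactly as there (min-type is descending, so shrinking each $\Ga^+(v)$ can only increase each denominator's value and hence decrease the sum), and for (b) the paper likewise gets the inequality ``$\geq$'' from part (a) applied componentwise and the inequality ``$\leq$'' from a one-parameter rescaling of the coordinates on a maximal-height component. The only structural difference is that the paper inducts on the number of strong components, peeling off a single maximum-height component per step, whereas you induct on the height and peel off all height-$H$ components at once; that variation is immaterial.

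There is, however, a genuine error in your key rescaling step: you scale the height-$H$ components \emph{up} by $t\to+\infty$, and the claimed limit $t^{-1}\fname(\xx'(t)\mid\Ga^+(v))\to\fname(\xx\mid\Ga_i^+(v))$ is false. In the induction step $H\geq 1$, so the component $\Ga_i$ is not final and some $v\in\VV(\Ga_i)$ has an out-neighbour in a strictly lower (hence unscaled) component. Writing $\Ga^+(v)=A\cup B$ with $A=\Ga_i^+(v)$ scaled and $B\neq\emptyset$ unscaled, homogeneity gives $t^{-1}\fname\bigl((tx_w)_{w\in A},(x_u)_{u\in B}\bigr)=\fname\bigl((x_w)_{w\in A},(t^{-1}x_u)_{u\in B}\bigr)$, and in the ``equivalently'' form of the min-type definition it is the \emph{small} ($\eps$-scaled) arguments that survive after renormalization --- so this quantity tends to $0$, not to $\fname(\xx\mid A)$. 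Concretely, for $\fname=\land$ the denominator $\land\bigl((tx_w)_{w\in A},(x_u)_{u\in B}\bigr)$ stabilizes at $\land_{u\in B}x_u$ while the numerator $tx_v$ blows up, so $S^{\Ga}_{\fname}(\xx'(t))\to+\infty$: your deformation moves in exactly the wrong direction. (Minimal test case: $\VV=\{1,2\}$ with arcs $1\to 2$ and $2\to 2$; then $S^\Ga_\land(tx_1,x_2)=tx_1/x_2+1\to\infty$, whereas the correct value of $m^\Ga_\land$ is $0+1$.) The fix is the paper's: multiply the height-$H$ coordinates by $\eps\to 0^+$. Then for such $v$ the numerator is $\eps x_v$ and $\eps^{-1}\fname(\xx'(\eps)\mid\Ga^+(v))\to\fname(\xx\mid\Ga_i^+(v))$ by the min-type definition (now the $\eps$-scaled, in-component arguments are the surviving ones), so each such term tends to $x_v/\fname(\xx\mid\Ga_i^+(v))$, while the lower-height terms are untouched because no arc leaves a lower-height component into a height-$H$ one. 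With that single sign reversal the rest of your argument goes through.
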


\begin{proof}
(a) By definition, a function of min-type is descending. 
Therefore $\fname(\xx|\Ga^+(v))\leq \fname(\xx|{\hat\Ga}^+(v))$ for any
$\xx\in\RR_+^{\VV(\Ga)}$. Hence $S^{\Ga}_\fname(\xx)\geq S^{\hat\Ga}_\fname(\hat\xx)$, where $\hat\xx$ is the truncation of the vector $\xx$
containing only the components with indices from $\VV(\hat\Ga)$. The claimed inequality between the lower bounds follows.

\smallskip
(b) We will carry out the proof by induction on the number $N=|\VV(\conds{\Ga})|$ of strong components.
The base case $N=1$ is trivial.

Suppose $N>1$ and let $\Ga_1$ be one of the components of $\Ga$ of the maximum height $H=H(\Ga)$. Let $\hat\Ga$ be the induced subgraph of $\Ga$ on the set of nodes
$\VV(\hat\Ga)=\VV(\Ga)\setminus\VV(\Ga_1)$. In view of the induction hypothesis, we need to prove that
$m_\fname^\Ga=m_\fname^{\Ga_1}+m_\fname^{\hat\Ga}$.
By (a), the left-hand side is not less than the right-hand side. The remaining task is to prove the inequality $m_\fname^\Ga\leq m_\fname^{\Ga_1}+m_\fname^{\hat\Ga}$.

The argument is similar to that used in the proof of Proposition~\ref{prop:strongred-max}.
Again, we fix a vector $\xx$ and define $\xx'(\eps)$ by the formula \eqref{x-partial-rescale}. 

If $v\in\VV(\Ga_1)$, 
%and $\hat\xx$ is the same as in the final part of the previous proof, 
then by the definition of a function of min-type we have
$$
\lim_{\eps\to 0^+}
\eps^{-1}\fname(\xx'(\eps)\mid\Ga^+(v))\geq \fname(\xx\mid\Ga_1^+(v)).
$$
Hence
$$
  \lim_{\eps\to 0^+}\sum_{v\in\VV'(\Ga_1)}\frac{x'_v(\eps)}{\fname(\xx'(\eps)\mid\Ga^+(v))}
\leq\sum_{v\in\VV'(\Ga_1)}\frac{x_v}{\fname(\xx\mid\Ga_1^+(v))}=S_\fname^{\Ga_1}(\xx^{(1)}),
$$
where $\xx^{(1)}$ is the truncation of the the vector $\xx$ containing only the components with indices from $\VV(\Ga_1)$. 

Taking into account the formula \eqref{redsum-inv}, which remains valid here, we get
$$
 \lim_{\eps\to 0^+}S_\fname^{\Ga}(\xx'(\eps))\leq S_\fname^{\Ga_1}(\xx^{(1)})+S_\fname^{\hat\Ga}(\hat\xx).
$$
Minimizing 
%independently 
over $\hat\xx$ and $\xx^{(1)}$, we complete the induction step.
\end{proof}

%Poset of graphs with fixed $\VV$; behaviour of f.
%[from: Simple prelims]

\section{Admissible maps and functional graphs}
\label{sec:unary}

A {\em functional graph}\ is an out-regular graph
with $d^+(v)\equiv 1$. In other words, in such a graph
the set $\Ga^+(v)$ is a singleton for every node $v$.
The term {\em functional graph}\ refers to the fact that $\Ga^+(v)=\{\sigma(v)\}$ with some function $\sigma:\VV(\Ga)\to\VV(\Ga)$.

A functional graph is strongly connected if and only if $\sigma$ is a cyclic permutation. The weaker condition:
$\sigma$ is a bijection (a permutation) --- is equivalent to the equality $\Ga=\lim\Ga$.

\smallskip
For the analysis of lower bounds for max-sums and min-sums
the notion of an admissible map will be instrumental.

\begin{definition}
\label{def:adm_map}
Given a graph $\Ga$,
a map $\sigma:\,\VV(\Ga)\to\VV(\Ga)$
is called {\em admissible}\ (or {\em $\Ga$-admissible}\ if one needs to be precise about the underlying graph), if $\sigma(v)\in\Ga^+(v)$ for any $v\in\VV(\Ga)$.
\end{definition}

For an arbitrary map $\sigma:\,\VV(\Ga)\to\VV(\Ga)$ we can define the functional graph $\Ga_\sigma$ on the set of nodes $\VV(\Ga_\sigma)=\VV(\Ga)$ by putting
$\Ga_\sigma^+(v)=\{\sigma(v)\}$. The map $\sigma$ is $\Ga$-admissible if and only if $\Ga_\sigma$ is a subgraph of $\Ga$.

The set of all $\Ga$-admissible maps will be denoted
$\mathcal{F}_\Ga$. 

Clearly, $\mathcal{F}_\Ga=\emptyset$ if and only if
$\min_{v\in\VV(\Ga)} d^+(v)=0$ (never the case for strongly connected graphs with more than one node)
and $|\mathcal{F}_\Ga|=1$ if and only if $\Ga$ is a functional graph.

Any function $\sigma\in\mathcal{F}_\Ga$ is a {\em choice function}\ for the family of sets $\{\Ga^+(v),v\in \VV(\Ga)\}$ in the sense of set theory.

\begin{remark} 
The set $\mathcal{F}_\Ga$ can also be characterized as the set of bases of the \emph{tail partition matroid}\ of the graph $\Ga$ (by identifying a function $\kappa\in\mathcal{F}_\Ga$ with the set of arcs
$\{(a,\kappa(a)),\, a\in\VV(\Ga)\}\subset\arcs{\Ga}$) 
\cite[Example~8.2.22, p.~357]{West_2001}.
\end{remark}

Let us call surjective (equivalently, bijective) admissible functions {\em admissible bijections}. The set of such functions will be denoted $\mathcal{F}^*_\Ga$.
An admissible bijection provides a {\em system of distinct representatives}\ (SDR)
%\footnote{also called a {\em transversal}}, 
for the family of sets $\{\Ga^+(v)\}$. Thus $\mathcal{F}^*_\Ga\neq\emptyset$ if and only if the system $\{\Ga^+(v)\}$ admits an SDR.

By Hall's theorem, $\mathcal{F}^*_\Ga\neq\emptyset$ if and only if for any subset $\mathcal{U}\subset\VV(\Ga)$
there holds $\left|\cup_{v\in\mathcal{U}}\Ga^+(v)\right|
\geq|\mathcal{U}|$.

\smallskip
The relevance of admissible functions is explained by the following simple lemma.

\begin{lemma}
\label{lem:unary-red}
Let $\Ga$ be a graph with $\min_v d^+(v)>0$.
Let $\fname=\lor$ or $\land$. Then for any vector
$\xx\in\RR^{\VV(\Ga)}$ there exists a function
$\sigma\in\mathcal{F}_\Ga$ such that
\beq{sum-unary}
 S_\fname^\Ga(\xx)=S_\fname^{\Ga_\sigma}(\xx)=
\sum_{v\in\VV(\Ga)} \frac{x_v}{x_{\sigma(v)}}.
\eeq
\end{lemma}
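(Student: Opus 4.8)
The plan is to construct $\sigma$ pointwise, exploiting the fact that both $\lor$ and $\land$ over a finite set of positive reals are \emph{attained} at some index. Concretely, for each node $v\in\VV(\Ga)$ the set $\Ga^+(v)$ is finite and nonempty (by the hypothesis $\min_v d^+(v)>0$), so we may choose $\sigma(v)\in\Ga^+(v)$ to be an index realizing the relevant extremum of $\xx$ over $\Ga^+(v)$: if $\fname=\lor$, pick $\sigma(v)$ with $x_{\sigma(v)}=\max_{j\in\Ga^+(v)}x_j=\maxx{\xx}{\Ga^+(v)}$; if $\fname=\land$, pick $\sigma(v)$ with $x_{\sigma(v)}=\min_{j\in\Ga^+(v)}x_j=\minx{\xx}{\Ga^+(v)}$. (Ties are broken arbitrarily, e.g.\ by taking the least such index; this is where the Axiom of Choice would formally enter if $\VV(\Ga)$ were infinite, but here everything is finite.) By Definition~\ref{def:adm_map} the resulting $\sigma$ is $\Ga$-admissible, i.e.\ $\sigma\in\mathcal{F}_\Ga$.

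With this $\sigma$ fixed, I would verify the two claimed equalities in \eqref{sum-unary} directly. For the first equality, observe that for \emph{every} $v$ we have $\fname(\xx|\Ga^+(v))=x_{\sigma(v)}$ by the choice of $\sigma(v)$; since $\VV'(\Ga)=\VV(\Ga)$ here (all outdegrees are positive), summing the identity $x_v/\fname(\xx|\Ga^+(v))=x_v/x_{\sigma(v)}$ over $v\in\VV(\Ga)$ gives $S_\fname^\Ga(\xx)=\sum_{v}x_v/x_{\sigma(v)}$. For the second equality, note that the functional graph $\Ga_\sigma$ has $\Ga_\sigma^+(v)=\{\sigma(v)\}$, a singleton, so $\fname(\xx|\Ga_\sigma^+(v))=\fname(x_{\sigma(v)})=x_{\sigma(v)}$ regardless of whether $\fname$ is $\lor$ or $\land$ (the unary instance of either is the identity); hence $S_\fname^{\Ga_\sigma}(\xx)=\sum_v x_v/x_{\sigma(v)}$ as well. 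Chaining the two gives the lemma.

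There is essentially no hard part here; the only point requiring a word of care is domain admissibility. One should check that $\xx$ being admissible for $S_\fname^\Ga$ (Definition~\ref{def:grsums}) forces every denominator $x_{\sigma(v)}$ to be strictly positive, so that the right-hand sum is well defined with no $0/0$ terms. If $x_v>0$ this is automatic once we know $\fname(\xx|\Ga^+(v))=x_{\sigma(v)}>0$, which follows in the $\lor$ case from $x_{\sigma(v)}\ge x_v>0$ and in the $\land$ case from admissibility itself (if the min were $0$ then $\fname(\xx|\Ga^+(v))=0$, which together with $x_v>0$ is still fine as a term equal to $+\infty$, matching $S_\fname^\Ga(\xx)=+\infty$); if $x_v=0$, admissibility of $\xx$ says $\fname(\xx|\Ga^+(v))\neq 0$, so again $x_{\sigma(v)}>0$ and the term is $0$. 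Thus the identity $S_\fname^\Ga(\xx)=\sum_v x_v/x_{\sigma(v)}$ holds verbatim, including the degenerate cases where both sides are $+\infty$, and the proof is complete. The lemma is really just the observation that over finite sets the extremal value of a nonnegative tuple is a specific coordinate, packaged so as to reduce max-/min-sums over $\Ga$ to ordinary ratio sums over an admissible functional subgraph.
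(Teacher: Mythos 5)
Your construction is exactly the paper's: for each $v$ choose $\sigma(v)\in\Ga^+(v)$ realizing the extremum (the paper's one-line proof does precisely this, breaking ties arbitrarily). Your additional remarks on the degenerate/admissibility cases are correct but not needed beyond what the paper records; the argument is the same.
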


\begin{proof}
For $\fname=\lor$ or $\land$ there is always some
$w\in\Ga^+(v)$ such that $x_v=\fname(\xx|\Ga^+(v))$.
Put $\sigma(v)=w$. (If there are several possibilities, chose one arbitrarily.)
\end{proof}

The function $\sigma$ in \eqref{sum-unary} will be called
$(\xx,\Ga)$-admissible or simply $\xx$-admissible if there is no ambiguity.
Of course it may (and usually does) vary with $\xx$, so in the problem of minimization over $\xx$ Lemma~\ref{lem:unary-red} does not provide a reduction from a given graph $\Ga$ to a specific functional graph. Yet even the ``$\xx$-dependent reduction'' will be useful in both max and min cases.
In Sec.~\ref{sec:min} we will need to describe the $\xx$-dependence in \eqref{sum-unary} more precisely.

\smallskip
In the remaining part of this section we find
$m_\fname^\Ga$ for $\fname=\Mp{p}$,
$p\in[-\infty,+\infty]$, and a functional graph $\Ga$. 

Let $\sigma$ be the (unique) $\Ga$-admissible function.
We have $\Mp{p}(\xx|\Ga^+(v))=x_{\sigma(v)}$ for any $p$. So min-sums and max-sums are the same, as well as any $p$-sums, and have the form as in the right-hand side of
\eqref{sum-unary}.

To determine the greatest lower bound of this sum, it suffices, in view of the results of Sec.~\ref{sec:strong}, to consider the case of a strongly connected graph $\Ga$.
In this case, as we noted earlier, $\sigma$ is a cyclic permutation of the set $\VV(\Ga)$. By the AM-GM inequality, the minimum value of the sum is $|\VV(\Ga)|$, attained when all $x_v$ are equal. 

Using Proposition~\ref{prop:strongred-max}, we can drop the assumption of strong connectedness and come to the following result.

\begin{proposition}
\label{prop:unary}
If $\Ga$ is a functional graph and $\fname=\Mp{p}$ with 
any $p\in[-\infty,+\infty]$, then
$$
 m^\Ga_{\fname}=|\VV(\lim\Ga)|.
$$
In particular, this is true for $\fname=\lor$ and $\fname=\land$.
\end{proposition}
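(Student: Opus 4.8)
The plan is to reduce the general functional graph to the strongly connected case using the machinery already developed, and then handle the strongly connected case by the AM-GM inequality. First I would observe that for a functional graph $\Ga$, the map $\fname=\Mp{p}$ applied to the singleton $\Ga^+(v)=\{\sigma(v)\}$ gives $\Mp{p}(\xx|\Ga^+(v))=x_{\sigma(v)}$ for every $p\in[-\infty,+\infty]$, since a power mean of a single argument is that argument. Consequently, regardless of $p$, the graphic $\fname$-sum has the form
$$
 S^\Ga_\fname(\xx)=\sum_{v\in\VV(\Ga)}\frac{x_v}{x_{\sigma(v)}},
$$
which no longer depends on $p$ at all. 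In particular it suffices to treat $\fname=\lor$, which is of max-type.

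Next I would invoke Proposition~\ref{prop:strongred-max}: since $\lor$ is a function of max-type, $m^\Ga_\lor=m^{\lim\Ga}_\lor=\sum_{\Ga_i\in\Max\,\conds{\Ga}}m^{\Ga_i}_\lor$. For a functional graph, the final strong components are precisely the directed cycles reachable at the ``end'' of the $\sigma$-dynamics: each $\Ga_i\in\Max\,\conds{\Ga}$ is a strongly connected functional graph, hence (as noted in the text) $\sigma$ restricted to $\VV(\Ga_i)$ is a cyclic permutation. So the remaining task is to compute $m^{\Ga_i}_\lor$ for a single cycle.

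For a strongly connected functional graph $\Ga_i$ on $k=|\VV(\Ga_i)|$ nodes with $\sigma$ a $k$-cycle, the sum is $\sum_{v}x_v/x_{\sigma(v)}$. The product of all $k$ ratios telescopes to $1$ because $\sigma$ is a permutation: $\prod_v (x_v/x_{\sigma(v)}) = (\prod_v x_v)/(\prod_v x_{\sigma(v)})=1$. By the AM-GM inequality, $\sum_v x_v/x_{\sigma(v)}\geq k\bigl(\prod_v x_v/x_{\sigma(v)}\bigr)^{1/k}=k$, with equality when all ratios are $1$, i.e.\ when all $x_v$ are equal — which is achievable within the admissible domain. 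Hence $m^{\Ga_i}_\lor=k=|\VV(\Ga_i)|$. Summing over the final components and recognizing that their disjoint union is exactly $\lim\Ga$ gives $m^\Ga_\fname=\sum_i |\VV(\Ga_i)|=|\VV(\lim\Ga)|$.

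I do not anticipate a serious obstacle here; the statement is essentially a corollary of Proposition~\ref{prop:strongred-max} together with the telescoping/AM-GM observation. The only point requiring minor care is the claim that equality in AM-GM is attained on an admissible vector (it is — the all-ones vector is admissible for any graph with no empty out-neighborhoods, which holds since $\Ga$ is functional), and the bookkeeping that the final strong components of a functional graph are vertex-disjoint cycles whose union is $\lim\Ga$, which is immediate from the definition of $\lim\Ga$ and the structure of functional-graph dynamics.
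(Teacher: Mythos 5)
Your proposal is correct and follows essentially the same route as the paper: observe that every power mean of a singleton reduces the sum to $\sum_v x_v/x_{\sigma(v)}$ independently of $p$, apply the strong reduction of Proposition~\ref{prop:strongred-max} to pass to the final components (cycles of $\sigma$), and finish each cycle with AM-GM using the telescoping product. Nothing is missing.
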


\section{Max-sums}
\label{sec:max}

It turns out that in the problem of minimization of max-sums the answer is always an integer.

\begin{theorem}
\label{thm:max-girth}
If $\Ga$ is a strongly connected graph, then 
$m^\Ga_\lor$ is equal to $g(\Ga)$, the {\em girth}\ of $\Ga$,
which is the minimum length of a (directed) cycle in $\Ga$.

If $\Ga$ is an arbitrary graph, then $m^\Ga_\lor$ is equal to $g^*(\Ga)$, 
%{\em the final girth of $\Ga$}\, that is, 
the sum of girths of all components of $\lim\Ga$. 
\end{theorem}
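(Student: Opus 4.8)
By Proposition~\ref{prop:strongred-max}, since $\lor$ is a function of max-type, we have $m^\Ga_\lor = \sum_{\Ga_i\in\Max\,\conds{\Ga}} m^{\Ga_i}_\lor$, so the second claim reduces to the first. It therefore suffices to prove that $m^\Ga_\lor = g(\Ga)$ for a strongly connected graph $\Ga$. The strategy is to prove the two inequalities separately.

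\emph{Upper bound $m^\Ga_\lor \leq g(\Ga)$.} Here I would exhibit an explicit admissible vector $\xx$ for which $S^\Ga_\lor(\xx)$ is close to (or equal to) the girth. The natural candidate is built around a shortest cycle $C$ of length $g=g(\Ga)$: put geometrically decaying weights along $C$ (say $x_v = \lambda^{k}$ at the $k$-th node of $C$ for some $\lambda>1$ to be chosen), and send $x_v\to 0$ fast for nodes off $C$. For a node $v$ on $C$, $\max_{j\in\Ga^+(v)} x_j$ is governed by the next node on $C$ (the off-cycle neighbours contribute negligibly once their weights vanish), giving a ratio $x_v/x_{v^+}=1/\lambda$ around the cycle, except at the one node where the cycle "wraps around", whose ratio is $\lambda^{g-1}$. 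The resulting sum tends to $g-1 + $ (wrap term). A cleaner route: use Lemma~\ref{lem:unary-red}-style reasoning in reverse — assign $x_v$ so that the $\xx$-admissible function $\sigma$ restricted to $C$ is the cyclic shift, and optimize; the AM--GM bound on $\sum_{v\in C} x_v/x_{\sigma(v)}$ over a $g$-cycle gives exactly $g$, while off-cycle summands vanish in the limit $\eps\to 0$ via the max-type property. So $m^\Ga_\lor\leq g(\Ga)$.

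\emph{Lower bound $m^\Ga_\lor \geq g(\Ga)$.} This is the heart of the matter. Fix any admissible $\xx>0$. By Lemma~\ref{lem:unary-red}, there is $\sigma\in\mathcal{F}_\Ga$ with $S^\Ga_\lor(\xx) = \sum_{v\in\VV(\Ga)} x_v/x_{\sigma(v)}$ and $x_v = \lor(\xx|\Ga^+(v)) \geq x_{\sigma(v)}$ — wait, more precisely $x_v$ in the numerator and $x_{\sigma(v)}=\max_{j}x_j\geq x_w$ for all $w\in\Ga^+(v)$, but there is no a priori comparison between $x_v$ and $x_{\sigma(v)}$. The key structural fact is that $\sigma$, being a self-map of the finite set $\VV(\Ga)$, decomposes the functional graph $\Ga_\sigma$ into connected components each consisting of a single directed cycle with in-trees hanging off it; every such cycle in $\Ga_\sigma$ is a directed cycle of $\Ga$ (since $\sigma$ is admissible), hence has length $\geq g(\Ga)$. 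Now I bound $\sum_v x_v/x_{\sigma(v)}$ from below. On each cycle $Z$ of $\sigma$ of length $\ell\geq g$, the product of the ratios $\prod_{v\in Z} x_v/x_{\sigma(v)} = 1$ (telescoping), so by AM--GM $\sum_{v\in Z} x_v/x_{\sigma(v)} \geq \ell \geq g$. For tree edges $v\to\sigma(v)$ feeding into the cyclic part, the ratios $x_v/x_{\sigma(v)}$ are positive but not controlled by a product identity; however each such summand is simply $\geq 0$, which is wasteful. The right move: since $\Ga$ is strongly connected, $\sigma$ need not be a bijection, but one can argue that on \emph{every} component of $\Ga_\sigma$ the cyclic part alone already contributes at least $g$, and there are no "free" nodes — every node lies in some component, and distinct components are disjoint, so $S^\Ga_\lor(\xx) \geq (\text{number of components})\cdot g + (\text{nonneg.\ tree terms}) \geq g$. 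Hence $m^\Ga_\lor\geq g(\Ga)$.

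\emph{Main obstacle.} The delicate point is the lower bound when $\sigma$ is far from a bijection: we must ensure that no cancellation or degeneracy lets $\sum_v x_v/x_{\sigma(v)}$ dip below $g$. The clean argument is that every node of $\VV(\Ga)$ belongs to exactly one weakly-connected component of the functional graph $\Ga_\sigma$, each such component contains exactly one $\sigma$-cycle $Z$ (a genuine directed cycle of $\Ga$, length $\geq g$), and the AM--GM telescoping estimate $\sum_{v\in Z}x_v/x_{\sigma(v)}\geq |Z|\geq g$ holds componentwise; summing over components and discarding the nonnegative off-cycle terms yields $S^\Ga_\lor(\xx)\geq g$, with the count of components being at least one. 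One should double-check the edge case where $\VV(\Ga)$ is a single node with a loop ($g=1$), and confirm the admissibility of $\xx$ plays no role beyond positivity — which it does not, since all summands are then finite and the estimate is uniform. Combining the two inequalities gives $m^\Ga_\lor = g(\Ga)$, and the general case follows from Proposition~\ref{prop:strongred-max} as noted.
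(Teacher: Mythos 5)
Your reduction to the strongly connected case and your lower bound are essentially the paper's own argument: where the paper routes through Lemma~\ref{lem:unary-red} and Proposition~\ref{prop:unary}, you invoke the cycle decomposition of the functional graph $\Ga_\sigma$ and AM--GM directly, but the content is identical and that half is sound (every cycle of $\sigma$ is a directed cycle of $\Ga$, its ratios multiply to $1$, AM--GM gives at least its length $\geq g(\Ga)$, and the off-cycle terms are nonnegative and may be discarded).

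The gap is in the upper bound. ``Send $x_v\to 0$ fast for nodes off $C$'' does not by itself make the off-cycle summands vanish: if $v\notin C$ and $\Ga^+(v)$ contains no node of $C$, then both the numerator $x_v$ and the denominator $\max_{w\in\Ga^+(v)}x_w$ are among the weights being sent to $0$, and with a uniform choice $x_v=\eps$ off the cycle such a summand equals $\eps/\eps=1$ and survives in the limit --- so the bound fails whenever some off-cycle node has all its out-neighbours off the cycle. The appeal to ``the max-type property'' does not repair this: that property, as exploited in Proposition~\ref{prop:strongred-max}, governs the rescaling of whole strong components into which no arcs return, whereas here every node lies in the single strong component. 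What is needed --- and what the paper does --- is a stratified choice of weights: let $V_j$ be the set of nodes whose shortest path to the chosen shortest cycle $V_0$ has exactly $j$ arcs (these sets exhaust $\VV(\Ga)$ by strong connectedness), and set $x_v=\eps^{j}$ for $v\in V_j$. Then for $v\in V_j$ with $j>0$ the set $\Ga^+(v)$ meets $V_{j-1}$ but cannot meet $V_{j-k}$ with $k>1$, so that summand is exactly $\eps^{j}/\eps^{j-1}=\eps$, while each of the $g$ cycle summands equals $1$; letting $\eps\to 0^+$ gives $m^\Ga_\lor\leq g(\Ga)$. Incidentally, your first candidate --- geometric weights $x_v=\lambda^{k}$ along $C$ with $\lambda>1$ --- yields $(g-1)/\lambda+\lambda^{g-1}>g$ on the cycle, so the correct on-cycle choice is constant weights, consistent with the AM--GM equality case you mention.
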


\begin{proof}
The general result follows from the result for strongly connected graphs by Proposition~\ref{prop:strongred-max}.

Below, till the end of the proof we assume that $\Ga$ is strongly connected.

%-----------------------------
\iffalse
P.O. on the nodes of digraph: $b>>a$ if there is path from $a$ to $b$ but not from $b$ to $a$. 

Quotient of the set of closed elements of this relation is the set of closed elements of the condensation of $\Ga$.

Poset of all $\Ga$-permitted maps: $\kappa\succ\kappa'$ iff $\forall a$ $\kappa(a)\succeq\kappa'(a)$.
\fi
%-----------------------------

Let $\VV=\VV(\Ga)$. 
Suppose $\xx\in\RR_+^{\VV}$. Let $\sigma:\,\VV\to\VV$ be an $\xx$-admissible function, so that 
\eqref{sum-unary} holds.
By Proposition~\ref{prop:unary}, $S_\lor^{\Ga_\sigma}(\xx)
\geq|\VV(\lim\Ga_\sigma)|$.
Since the map $\sigma$ is bijective on the set $\VV(\lim\Ga_\sigma)$, this set is a union of cycles of 
$\sigma$. Hence $|\VV(\lim\Ga_\sigma)|\geq g(\Ga)$.
We conclude that $m_\lor^\Ga\geq g(\Ga)$. 

Let us now take a cycle in $\VV(\Ga)$ of length $g=g(\Ga)$.
We may assume that it consists of the nodes
$v_1,\dots,v_g$ and the arcs $v_1\to v_2,\dots, v_g\to v_1$.

We will define a special $\Ga$-admissible map $\sigma$.
Denote $V_0=\{v_1,\dots,v_g\}$. Put $\sigma(v_1)=v_2,
\dots, \sigma(v_g)=v_1$. Thus $\sigma$ is defined on $V_0$. 

If $V_0\neq\VV$, then denote by $V_1\subset\VV\setminus V_0$ the set of nodes
$v$ such that there is an arc from $v$ to $V_0$. 
If $v\to v_i$ is such an arc (arbitrarily chosen, if there is more than one suitable $v_i$), put $\sigma(v)=v_i$.
Now $\sigma$ is defined on $V_0\cup V_1$.

If $V_0\cup V_1\neq \VV$, then we take $V_2$, the set of nodes from which there is an arc to $V_1$, and define $\sigma$ on $V_2$ similarly to the above.
We continue this process until the set $\VV$ is exhausted.
It must happen, since from any node of $\Ga$ there exists a path to $V_0$. 

As the result, we obtain the map $\sigma$ such that
$\sigma:\,V_{j+1}\to V_j$ for $j>0$. Moreover, 
if $v\in V_j$, then the shortest path from $v$ to $V_0$
consists of $j$ arcs.

Take $\eps\in(0,1)$ and define the vector $\xx=\xx(\eps)\in\RR_+^\VV$ as follows:
$$
 x_v=\eps^j\quad\text{whenever $v\in V_j$,\;$j\geq 0$}.
$$
It is easy to see that for $v\in V_j$, $j>0$, the set $\Ga^+(v)$ contains a node from $V_{j-1}$ but does not contain nodes from $V_{j-k}$ with $k>1$. Hence
$\lor(\xx\mid\Ga^+(v))=\eps^{j-1}$.
Obviously, $\lor(\xx\mid\Ga^(v))=1$ for $v\in V_0$.

Thus we have
$$
 S_\lor^\Ga(\xx)=\sum_{v\in V_0} 1+\sum_{v\in\VV\setminus V_0} \eps.
$$
Letting $\eps\to 0^+$ we see that $m_\lor^\Ga\leq |V_0|=g$.
This completes the proof.
\end{proof}

\begin{remark}
Let us provide initial directions for an algorithmically minded reader concerning the complexity of computing girth of a strongly connected graph with $n$ nodes.
The most obvious method is to compute successive powers of the adjacency matrix until its trace
becomes positive \cite[Exercise~3.22]{BangGutin_2002}. It takes
%A Shortest cycle in digraph: 
$O(n^4)$ operations in the worst case.
%(In \S~5.2 of the same reference computing strong components is discussed.)
Better algortihms exist, see e.g.\ \cite{ItaiRodeh_1978} for an $O(n^{\log_2 7}\ln n)$ algorithm.
\end{remark}

We take a short recess to pay some attention to graphic power sums
with exponents $p<\infty$.
The evaluation of $m^{\Ga}_{\sump{p}}$ or $m^{\Ga}_{\Mp{p}}$ for an arbitrary $p<\infty$ can be difficult even for graphs with small number of nodes. 
In the absence of anything better, we state a simple, rough double-sided estimate.

\begin{proposition}
\label{prop:grpsum-ineq}
For any graph $\Ga$, we have the estimates
\begin{equation}
\label{gr-psum-lbnd}
g^*(\Ga)\leq m_{\Mp{p}}^\Ga\leq |\VV(\Ga)|, \quad p\in[-\infty,+\infty]
\end{equation}
and
\begin{equation}
\label{gr-psum-ubnd}
 m_{\Mp{p}}^\Ga\leq |\VV(\lim \Ga)|, \quad p\in[0,+\infty].
\end{equation}
\end{proposition}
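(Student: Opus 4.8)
The plan is to deduce all four inequalities from results already established, using the monotonicity of power means in $p$ together with the strong-reduction machinery. The key observation is that for a fixed graph $\Ga$ and fixed admissible $\xx$, the summand $x_v/\Mp{p}(\xx|\Ga^+(v))$ is a nonincreasing function of $p$ (since $\Mp{p}$ is nondecreasing in $p$ by the monotonicity of power means, \cite[\S\,16]{HLP}), so $S^\Ga_{\Mp{p}}(\xx)$ is nonincreasing in $p$, and therefore $m^\Ga_{\Mp{p}}$ is nonincreasing in $p$ on $[-\infty,+\infty]$. In particular $m^\Ga_{\Mp{p}}\le m^\Ga_{\Mp{-\infty}}=m^\Ga_\land$ and $m^\Ga_{\Mp{p}}\ge m^\Ga_{\Mp{+\infty}}=m^\Ga_\lor$ for every $p$.

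First I would prove the lower bound $g^*(\Ga)\le m^\Ga_{\Mp{p}}$ in \eqref{gr-psum-lbnd}. By the monotonicity just noted, $m^\Ga_{\Mp{p}}\ge m^\Ga_\lor$ for all $p\in[-\infty,+\infty]$, and by Theorem~\ref{thm:max-girth}, $m^\Ga_\lor=g^*(\Ga)$. For the right-hand inequality $m^\Ga_{\Mp{p}}\le|\VV(\Ga)|$, note that evaluating at $\xx=(1,1,\dots,1)$ gives $\Mp{p}(\xx|\Ga^+(v))=1$ for every $v$, hence $S^\Ga_{\Mp{p}}(1,\dots,1)=|\VV'(\Ga)|\le|\VV(\Ga)|$; taking the infimum over admissible $\xx$ yields the bound. (Here $\VV'(\Ga)=\VV(\Ga)$ unless $\Ga$ has sink nodes, but $|\VV'(\Ga)|\le|\VV(\Ga)|$ in any case.)

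For the sharper upper bound \eqref{gr-psum-ubnd}, valid for $p\in[0,+\infty]$, I would again invoke monotonicity in $p$: for $p\ge 0$ we have $m^\Ga_{\Mp{p}}\le m^\Ga_{\Mp{0}}$, where $\Mp{0}$ is the geometric mean. Since $\Mp{0}$ is of max-type (as noted after the definition of max-type functions), Proposition~\ref{prop:strongred-max} applies and gives $m^\Ga_{\Mp{0}}=m^{\lim\Ga}_{\Mp{0}}=\sum_{\Ga_i\in\Max\,\conds{\Ga}}m^{\Ga_i}_{\Mp{0}}$. Each $\Ga_i$ is strongly connected, so evaluating at the all-ones vector on $\VV(\Ga_i)$ gives $m^{\Ga_i}_{\Mp{0}}\le|\VV(\Ga_i)|$, and summing over the final strong components yields $m^\Ga_{\Mp{0}}\le|\VV(\lim\Ga)|$. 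The same argument works verbatim for any $p\ge 0$ since $\Mp{p}$ is of max-type for $p>0$ and equals $\lor$ at $p=+\infty$, so one could alternatively apply Proposition~\ref{prop:strongred-max} directly to $\Mp{p}$; I expect the cleanest writeup routes through $p=0$ via monotonicity to avoid repeating the strong-reduction computation for each $p$.

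I do not anticipate a serious obstacle here: the proposition is essentially a bookkeeping corollary of Theorem~\ref{thm:max-girth}, Proposition~\ref{prop:strongred-max}, and power-mean monotonicity. The one point requiring mild care is that for $p<0$ the function $\Mp{p}$ is \emph{not} of max-type (it is not even descending), so Proposition~\ref{prop:strongred-max} is unavailable there — which is exactly why \eqref{gr-psum-ubnd} is restricted to $p\ge 0$ while \eqref{gr-psum-lbnd} holds for all $p$. One should also double-check the edge convention at $p=0$ (namely $\Mp{0}()=1$, $\Mp{0}=$ geometric mean) so that the monotonicity of $\Mp{p}$ through $p=0$ is genuinely valid on arguments that may include zeros; on strictly positive all-ones vectors this is immediate, and the infimum defining $m^\Ga_{\Mp{p}}$ can be taken over $\xx>0$ by homogeneity and the reciprocity relation, so no difficulty arises.
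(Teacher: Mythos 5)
Your proposal is correct and follows essentially the same route as the paper: the lower bound via power-mean monotonicity plus Theorem~\ref{thm:max-girth}, the crude upper bound via the all-ones vector, and the sharper bound via Proposition~\ref{prop:strongred-max} applied to the max-type functions $\Mp{p}$, $p\ge 0$ (the paper applies the strong reduction directly to each such $p$ rather than routing through $p=0$, but you note that alternative yourself). Your remark that the all-ones evaluation gives $|\VV'(\Ga)|\le|\VV(\Ga)|$ is in fact slightly more careful than the paper's own wording.
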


\begin{proof}
The left estimate in \eqref{gr-psum-lbnd} follows from Theorem~\ref{thm:max-girth} and monotonicity of the function $p\mapsto \Mp{p}(\xx|\Ga^+(v))$ for any $\xx\in\RR_+^{\VV(\Ga)}$ and any $v\in\VV(\Ga)$.

The right estimate in \eqref{gr-psum-lbnd} is obvious: 
putting $\xee=(1,1,\dots,1)$, we get
$$
m_{\Mp{p}}^\Ga\leq S^\Ga_{\Mp{p}}(\xee)=
|\VV(\Ga)|.
$$ 

Using Proposition~\ref{prop:strongred-max} and 
applying this trivial estimate to 
the graph $\lim\Ga$ in place of $\Ga$, we obtain \eqref{gr-psum-ubnd}.
\end{proof}

Returning to max-sums ($p=+\infty$), note some special cases of Theorem~\ref{thm:max-girth}. 

\smallskip
1. If $\Ga$ is a strongly connected graph containing at least one loop (a node $v$ such that $v\in\Ga^+(v)$), then  $m^\Ga_\lor=1$.

\smallskip
2. Let us examine cyclic max-sums $S_{n;J,+\infty}(\xx)$ in light of Theorem~\ref{thm:max-girth}.
For every natural $n$ the relevant graph $\Ga$ is circulant: its set of nodes is $\VV(\Ga)=[n]$ and the node-arc adjacencies are defined by
$\Ga^+(i)=(J+i)\pmod n$, $i=1,\dots,n$.

\smallskip
a) Suppose that $1\in J$. In view of the special case 1
and of Proposition~\ref{prop:strongred-max}, 
$\inf_{\xx}S_{n;J,+\infty}(\xx)$ is equal to the number of
components, $q$, of the graph $\Ga$. Since the cyclic shift $\tau$ (see the beginning of Sec.~\ref{sec:cycsum}) is an automorphism of the graph $\Ga$, the components
are $\tau$-congruent and we have $q=n/m$, where $m$ is the size of the component. It can be characterized as follows. Let $\ZZ_n$ be the abelian group $\ZZ/n\ZZ$.
Let $G_J$ be its subgroup generated by the elements
$\{j-1\mid j\in J\}$. Then $m=|G_J|$. Clearly, if $|J|>1$,
then $m=n$ (so $q=1$) for infinitely many $n$ (for example, for $n$ coprime with $j-1$ for some $j\in J\setminus\{1\}$).

\smallskip
b) Suppose now that $1\notin J$. Let $G_J$ be defined the same way as in (a) and let $t$ denote the minimum number of summands (not necessarily distinct) in a sum $(j_1-1)+\dots+(j_t-1)$ representing zero in $\ZZ_n$ with $j_i\in J$.
The number of $G_J$-cosets, $n/|G_J|$, is equal to the number of strong components of $\Ga$
and $t$ is the girth of each of the components. Hence,
by Theorem~\ref{thm:max-girth},
$$m^\Ga_\lor=t\cdot \frac{n}{|G_J|}.$$

If $J\subset[2,b]$ and $n>b$, then
obviously $t\geq\lfloor (n-1)/(b-1)\rfloor+1=\lfloor (n+b-2)/(b-1)\rfloor$.
%
%If $2\in J$, then $G_J=\ZZ_n$. 
We obtain the estimate
\beq{DiaDay2}
 \inf_\xx S_{n;J,+\infty}(\xx)=m_\lor^\Ga\geq \left\lfloor\frac{n+b-2}{b-1}\right\rfloor.
\eeq
If, moreover, $J=[2,k+1]$ is an interval, then
$G_J=\ZZ_n$, so $\Ga$ is strongly connected.
In this case the right-hand side of \eqref{DiaDay2} is equal to the number $t$ of the summands
in the representation
$n=k+\dots+k+k'$, with $k'=n-(t-1)k$. Hence $t=g(\Ga)$
and the inequality in  \eqref{DiaDay2} turns to equality.
Thus we obtain the limit case ($\nu=+0$) of Diananda's 
formula \eqref{DiaDay}.

%\begin{example}
%Circulant graph \cite[p.~306]{West_1996}
%(West1996) $C_{n,k}$; 
% (Here for odd $k=2r+1$, $J=[1-r,1+r]$.)
%
%$g(C_{n,k})=\lceil n/k\rceil$.
%``Circulant shell'': same girth, though outdegree is $2$ for each node.
%\end{example}

\smallskip
Let us summarize the discussion of the special cases 2(a) and 2(b) in a slightly cruder but more transparent form, emphasizing in (b) the behaviour for large $n$.

\begin{proposition}
\label{prop:maxsum-cyclic}
Let $m_{\lor}(n,J)=\inf_{\xx>0} S_{n,J,+\infty}(\xx)$
be the greatest lower bound of the cyclic max-sum with $n$ terms defined by the pattern $J\subset\ZZ$.

\smallskip
(a) Suppose that $J\ni\{1\}$, $|J|>1$, and $\gcd(j-1\mid 1\neq j\in J)=s$. Then 
$$
m_{\lor}(n,J)=\gcd(n,s).
$$
In particular, $m_{\lor}(n,J)=1$ infinitely often as $n\to\infty$.

\smallskip
(b) If $J\not\ni\{1\}$ and $r(J)=\max(|j-1|,\,j\in J)$, then
$$
m_{\lor}(n,J)=\frac{n}{r(J)}+O(1), \quad n\to\infty.
$$
\end{proposition}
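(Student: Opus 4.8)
The plan is to deduce both parts from Theorem~\ref{thm:max-girth} and from the structural description of the circulant graph $\Ga$ attached to the cyclic pattern $J$, namely $\VV(\Ga)=[n]$ and $\Ga^+(i)=(J+i)\bmod n$. The group $G_J\leq\ZZ_n$ generated by $\{j-1:j\in J\}$ controls the strong components: since the cyclic shift $\tau$ is a graph automorphism, the strong components are the cosets of $G_J$, there are $n/|G_J|$ of them, and (when $|J|>1$ and $\gcd(n,s)$ is involved) $G_J$ is exactly the cyclic subgroup generated by $s=\gcd(j-1:1\neq j\in J)$, so $|G_J|=n/\gcd(n,s)$. I would spell this out once at the start and use it in both parts.

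For part~(a), the hypothesis $1\in J$ means every node carries a loop, so by special case~1 each strong component has girth $1$; by Theorem~\ref{thm:max-girth} (or directly by Proposition~\ref{prop:strongred-max} combined with the loop observation) $m_\lor^\Ga$ equals the number of components, which is $n/|G_J|=n/(n/\gcd(n,s))=\gcd(n,s)$. The ``infinitely often'' claim follows by choosing $n$ coprime to $s$ (e.g.\ $n$ prime and larger than $s$), giving $\gcd(n,s)=1$; this is the content already noted in special case~2(a).

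For part~(b), with $1\notin J$, Theorem~\ref{thm:max-girth} gives $m_\lor(n,J)=t\cdot(n/|G_J|)$ where $t$ is the girth of a component, equivalently the least number of summands (with repetition) in a representation of $0\in\ZZ_n$ as $(j_1-1)+\dots+(j_t-1)$ with $j_i\in J$. To get the asymptotics I would bound $t$ from both sides using $r=r(J)=\max(|j-1|:j\in J)$. Lower bound: any such representation has absolute value at most $tr$ before reduction, and since the reduced sum is a nonzero multiple of $n$ (it cannot be $0$ in $\ZZ$ unless some partial cancellation still leaves a multiple of $n$), one gets $tr\geq n$, hence $t\geq n/r$. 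Upper bound: pick $j^*\in J$ with $|j^*-1|=r$ and, allowing also the opposite-sign generators available in $J$ if any, build a representation of the form $n=\sum(j_i-1)$ using roughly $n/r$ copies of the extremal generator plus $O(1)$ correction terms to hit the exact value $n$ (or $-n$); this is possible because $\gcd(j-1:j\in J)$ divides some multiple of $n$ lying within $O(r)$ of $n$. Combined, $t=n/r+O(1)$. Finally, since $|G_J|$ divides $n$, the factor $n/|G_J|$ is a fixed divisor structure, but one must be slightly careful: the cleanest route is to observe $t\cdot(n/|G_J|)$ counts the shortest cycle length in the full graph $\Ga$ directly — the shortest directed cycle in $\Ga$ uses steps from $J+(\cdot)$ and has length equal to the least $t$ with $(j_1-1)+\dots+(j_t-1)\equiv 0\pmod n$ — so $m_\lor(n,J)$ itself equals that minimal $t'$, and the same two-sided estimate $t'=n/r+O(1)$ applies verbatim, yielding the claim.

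The main obstacle is the upper bound in the girth estimate of part~(b): one needs to construct, for every large $n$, a short closed walk of length $n/r+O(1)$, which requires combining the ``long jump'' of size $r$ with bounded-length adjustments to land exactly on a multiple of $n$. This is where the $O(1)$ error term genuinely arises and must be handled by an explicit (if crude) choice of correction steps — essentially a one-dimensional coin-problem argument — rather than by any soft counting. Everything else is bookkeeping around $G_J$-cosets and an appeal to Theorem~\ref{thm:max-girth}.
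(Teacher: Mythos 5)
Your part (a) is correct and coincides with the paper's own argument: since $1\in J$ every node of the circulant graph carries a loop, each of the $\gcd(n,s)$ strong components (the cosets of $G_J=\langle s\rangle$) has girth $1$, and Theorem~\ref{thm:max-girth} together with Proposition~\ref{prop:strongred-max} gives $m_\lor(n,J)=\gcd(n,s)$; choosing $n$ coprime to $s$ settles the final claim.

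Part (b) has a genuine gap --- in fact two. First, your closing ``cleanest route'' contradicts the formula $m_\lor(n,J)=t\cdot n/|G_J|$ that you correctly state a few lines earlier: by Theorem~\ref{thm:max-girth} the answer is the \emph{sum} of the girths of the $n/|G_J|$ final components, not the girth $t'$ of $\Ga$, and these differ whenever $G_J\neq\ZZ_n$. Second, the upper bound $t\le n/r+O(1)$ that you defer to an unspecified ``coin-problem'' construction cannot be carried out for arbitrary $J\not\ni 1$, because the two-sided estimate simply does not hold there: for $J=\{3\}$ the only displacement is $2$ and $m_\lor(n,\{3\})=n$ for every $n$ (one $n$-cycle when $n$ is odd, two $n/2$-cycles when $n$ is even), not $n/2+O(1)$; for $J=\{3,5\}$ and odd $n$ all displacements are even, so the shortest closed walk must sum to $2n$ and $m_\lor=\lceil n/2\rceil$, not $n/4+O(1)$. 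Your lower bound $t\ge n/r$ also tacitly assumes that all displacements $j-1$ have the same sign (otherwise the integer sum can vanish without being a nonzero multiple of $n$). For comparison, the paper's own justification consists only of the lower bound \eqref{DiaDay2} for $J\subset[2,b]$ and the exact evaluation $t=\lceil n/k\rceil$ for the interval $J=[2,k+1]$, where $G_J=\ZZ_n$ and the displacement $1$ supplies all needed correction steps; a complete proof of (b) must either restrict to such patterns or add an explicit hypothesis guaranteeing that a multiple of $n$ within $O(r)$ of $n$ is reachable by nonnegative combinations of the displacements.
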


As a simple consequence of part (b) and the
trivial inequality $\Mp{p}(\xx|\Omega)\geq |\Omega|^{-1/p}\cdot\max(\xx|\Omega)$ for $p>0$, we obtain the following asymptotic upper bound for minimum values of cyclic $p$-sums with an arbitrary pattern $J\not\ni\{1\}$:
$$
\inf_{\xx>0} S_{n,J,p}(\xx)\leq \frac{n|J|^{1/p}}{r(J)}+O(1)\leq 2n|J|^{\frac1p-1}+O(1)
$$
as $n\to\infty$.
In particular, as the there is no upper bound for $r(J)$ stipulated by the size of $J$, we at once reject the proposition that
there might exist a pattern-independent  (``universal'') positive lower bound $A$ in the inequality
$
n^{-1}S_{n,J,1}(\xx)
\geq A
$.

\smallskip
We conclude this section with some further upper estimates for girth $g(\Ga)$ and ``total girth''
$g^*(\Ga)$ that appears in Theorem~\ref{thm:max-girth}.

Caccetta and H\"aggkvist cojectured the upper bound $g(\Ga)\leq\lceil n/k\rceil$ for any strongly connected graph $\Ga$ with $n$ nodes and minimum outdegree $k$. 
See \cite{CacHag_1978} and \cite[Conjecture 8.4.1 (p.~330)]{BangGutin_2002}.
In our context the CH conjecture implies that for such a graph $\Ga$ and for some $\xx>0$ the inequality $\Smax{\Ga}{\xx}\leq(n+k-1)/k$
holds. Moreover, this (obviously) remains true for any graph with one final strong component, i.e.\ for any weakly connected digraph. 
If $\Ga$ is a digraph with weak components $G_1,\dots,G_m$
and $|\VV(G_j)|=n_j$, $j=1,\dots,m$, then
$$
g^*(\Ga)\leq\sum_{j=1}^m\frac{n_j+k-1}{k}=\frac{n+m(k-1)}{k}
\leq\left\lceil\frac{n-(m-1)}{k}\right\rceil+(m-1).
$$
Clearly, $n_j\geq k$ for all $j$, hence $m\leq n/k$ We get the estimate conditional on the CH conjecture:
%\beq{maxsum-valency-lbnd}
$$
\mmax{\Ga}=g^*(\Ga)<\frac{2n}{k}
$$
%\eeq
for any digraph $\Ga$ (weakly connected or not).
In particular, the $n$-dimensional vector $\xx=\xee=(1,1,\dots,1)$ is very far from optimal (one that minimizes $\Smax{\Ga}{\cdot}$) if $k\gg n$.
We see a sharp contrast with situation observed in the case $p=1$, at least for Diananda's sums: the result of \cite{Sadov_2016}
shows that $S_{n,k,1}(\xee)$ differs from $\inf_{\xx>0} S_{n,k,1}(\xx)$
only by a moderate constant.

\iffalse
It is instructive to compare the inequality 
\eqref{maxsum-valency-lbnd} 
to some estimates available in the case $p=1$.
On the one hand, we have the trivial estimate
$S^{\Ga}_{\mathrm{sum}_1}(\xee)\leq n/k$ for an out-regular graph $\Ga$ with $d^+(v)\equiv k$.
%(cf.\ also \eqref{gr-psum-lbnd} and \eqref{Mp_vs_sump}).
On the other hand, the result of \cite{Sadov_2016}
shows that $\inf_{\xx>0} S^{\Ga}_{\mathrm{sum}_1}(\xx)\asymp
n/k$ for special cyclic graphs (corresponding to the Diananda sums).
%in this case ($p=1$) the upper bound
%in general cannot be improved beyond possible amelioration of the constant.
In a very vague sense, we can say that the lower bounds for max-sums behave ``similarly'' to those of 
$1$-sums $S^\Ga_{\mathrm{sum}_1}$. Changing the normalization of the denominators (i.e.\ passing from $S^\Ga_{\mathrm{sum}_1}$ to $S^\Ga_{\mathfrak{M}_1}$)
leads to a drastically different behavior. As fuzzy 
as this argument reads, it makes us to favor the sums 
$S^\Ga_{\mathrm{sum}_p}$ rather than $S^\Ga_{\mathfrak{M}_p}$ as likely ``more natural'' version of graphic power sums.
\fi

Finally, we mention one unconditional result
that bounds girth from above if  
a lower estimate for the number of arcs is known.
%(of order $n^2$ where $n=|\VV(\Ga)|$).

Theorem 8.4.7 in \cite[p.~331]{BangGutin_2002},
due to Bermond, Germa, Heydemann and Sotteau (BGHS), says:  {\em if $\Ga$ is a strongly connected graph, $|\VV(\Ga)|=n$, 
and an integer $t \geq 2$ is such that
$$
|\arcs{\Ga}| \geq \frac{(n-t)(n-t+1)}{2}+n,
$$
then $g(\Ga) \leq t$.} 

\smallskip
The BGHS's unconditional estimate is in most cases much weaker than the one conjectured by Caccetta and H\"aggkvist. 

\begin{example}
Let $\Ga$ be a strongly connected graph with
$n=40$ nodes 
%containing the cycle $1\to 2\to\dots\to n\to 1$ (to guarantee strongly connected) 
and out-regular with $d^+(i)\equiv d=12$. 
The number
of arcs in $G$ is $A=nd=480$, and 
the assumption of the theorem is met with $t=10$, 
since $\binom{30}{2}=435<A-n=440$.
So the BGHS theorem gives the bound $g(\Ga)\leq 10$.

The CH conjecture suggests that
$g(\Ga)\leq \lceil 40/12\rceil=4$.

By Theorem~\ref{thm:max-girth} this means that if $\Omega_1,\dots,\Omega_{40}$
are any $12$-element subsets of $[1,40]$ such that 
(for instance) $\Omega_i\ni(i+1)\mod 40$ (to guarantee strong connectedness), then there exists 
$\xx\in\RR_+^{40}$ such that 
$$
 \sum_{i=1}^{40}\frac{x_i}{\max(x_j\mid j\in\Omega_i)}
\leq 4.
$$
\end{example}

\section{Min-sums I: The problem for an individual graph}
\label{sec:min}

We turn to the minimization problem for min-sums.
In this section we discuss how the problem can be solved {\it in principle} for the given graph $\Ga$.

We begin with an estimate for $m_\land^\Ga$ in combinatorial terms with sign opposite to that in \eqref{gr-psum-ubnd} 
(in the present case $p=-\infty$).
Recall that the set $\mathcal{F}_\Ga$ of admissible maps is defined in Sec.~\ref{sec:unary}, Definition~\ref{def:adm_map}.

\iffalse
\begin{remark}
Question: estimate the upper bound for $\mmin{\Ga}$ if the rank $r_\Ga<n$ of the {\em transversal matroid}\ 
\cite{MatroidText} for the set system $\{\Ga^+(v)\}_{v\in\VV(\Ga)}$ is given. 
Note that $r_\Ga$ is not less than the right-hand side of 
\eqref{lbnd-minsum-sigma}.
\end{remark}
\fi

\begin{proposition} 
\label{prop:lbnd-minsum-sigma}
For any graph $\Ga$ 
\begin{equation}
\label{lbnd-minsum-sigma}
 \mmin{\Ga}\geq\max_{\sigma\in\mathcal{F}_\Ga}|\VV(\lim\Ga_\sigma)|.
\end{equation}
\end{proposition}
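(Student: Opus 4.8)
The plan is to bound $\mmin{\Ga}$ from below by exhibiting, for an arbitrary admissible vector $\xx$ and a fixed $\sigma\in\mathcal{F}_\Ga$, a subsum of $S_\land^\Ga(\xx)$ that is at least $|\VV(\lim\Ga_\sigma)|$. First I would apply Lemma~\ref{lem:unary-red}: for $\fname=\land$ there is an $\xx$-admissible map $\kappa\in\mathcal{F}_\Ga$ with $S_\land^\Ga(\xx)=\sum_{v\in\VV(\Ga)} x_v/x_{\kappa(v)}$. However, to relate things to a \emph{given} $\sigma$ rather than to the $\xx$-dependent $\kappa$, I would instead argue directly: since $\land(\xx|\Ga^+(v))=\min_{w\in\Ga^+(v)} x_w\leq x_{\sigma(v)}$ for every $v$ (because $\sigma(v)\in\Ga^+(v)$), we have the termwise inequality $x_v/\land(\xx|\Ga^+(v))\geq x_v/x_{\sigma(v)}$, hence
$$
S_\land^\Ga(\xx)\geq \sum_{v\in\VV(\Ga)} \frac{x_v}{x_{\sigma(v)}} = S_\land^{\Ga_\sigma}(\xx).
$$
Taking the infimum over admissible $\xx$ gives $\mmin{\Ga}\geq m_\land^{\Ga_\sigma}$.

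Next I would invoke Proposition~\ref{prop:unary} applied to the functional graph $\Ga_\sigma$ with $\fname=\land$: it yields $m_\land^{\Ga_\sigma}=|\VV(\lim\Ga_\sigma)|$. Combining, $\mmin{\Ga}\geq |\VV(\lim\Ga_\sigma)|$ for every $\sigma\in\mathcal{F}_\Ga$, and since the left-hand side does not depend on $\sigma$ we may take the maximum over $\sigma\in\mathcal{F}_\Ga$, obtaining \eqref{lbnd-minsum-sigma}. One should note the edge case $\mathcal{F}_\Ga=\emptyset$, which by the discussion in Sec.~\ref{sec:unary} happens exactly when $\min_{v} d^+(v)=0$; there the maximum over the empty set is $0$ (or $-\infty$) and the inequality holds vacuously, so nothing needs to be said — though to be safe I would remark that in that case the claimed bound is trivial.

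The argument is genuinely short, so there is no serious obstacle; the only thing to be careful about is the reduction from $\Ga$ to the functional graph $\Ga_\sigma$ — specifically, that passing from $\land(\xx|\Ga^+(v))$ to $x_{\sigma(v)}$ only \emph{decreases} the denominator and hence \emph{increases} the summand, which is the correct direction for a lower bound. A secondary point is making sure admissibility of $\xx$ for $\Ga$ is compatible with admissibility for $\Ga_\sigma$ (or, more simply, restricting attention to $\xx>0$, which by homogeneity suffices for computing the infimum and sidesteps the admissibility bookkeeping entirely). Beyond that, everything is an immediate consequence of Lemma~\ref{lem:unary-red} and Proposition~\ref{prop:unary}.
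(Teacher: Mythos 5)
Your proof is correct and is essentially the paper's own argument: the paper obtains $\mmin{\Ga}\geq \mmin{\Ga_\sigma}$ by citing Proposition~\ref{prop:strongred-min}(a) (whose proof is exactly your termwise inequality $\land(\xx|\Ga^+(v))\leq x_{\sigma(v)}$ for the subgraph $\Ga_\sigma$), and then invokes Proposition~\ref{prop:unary} just as you do. You have merely inlined the subgraph-monotonicity step instead of citing it, which changes nothing of substance.
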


\begin{proof}
(a) Since $\Ga_\sigma^+(v)\subset\Ga^+(v)$, we have
$\mmin{\Ga}\geq\mmin{\Ga_\sigma}$, cf.\ Proposition~\ref{prop:strongred-min}(a).
But $\mmin{\Ga_\sigma}=|\VV(\lim\Ga_\sigma)|$ by Proposition~\ref{prop:unary}. The result follows.
\end{proof}

The case of cyclic min-sums now looks completely trivial.

\begin{proposition}
\label{prop:minsum-cyclic}
For $p=-\infty$, any $n\geq 1$, and any finite nonempty set $J\subset\ZZ$ the inequality in \eqref{triv_est_SnJp} turns to equality. That is,
$ \inf_{\xx} S_{n;J,-\infty}(\xx)= n$.
\hfill$\Box$
\end{proposition}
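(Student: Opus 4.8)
The plan is to show $\inf_{\xx} S_{n;J,-\infty}(\xx) = n$ by combining the trivial upper bound \eqref{triv_est_SnJp} with the matching lower bound supplied by Proposition~\ref{prop:lbnd-minsum-sigma}. The upper bound $\inf_{\xx} S_{n;J,-\infty}(\xx) \leq n$ is already recorded in \eqref{triv_est_SnJp} (attained at $\xx=(1,\dots,1)$), so everything reduces to proving $\mmin{\Ga} \geq n$ for the circulant graph $\Ga$ associated with the cyclic pattern, namely $\VV(\Ga)=[n]$ and $\Ga^+(i) = (J+i)\bmod n$ for $i\in[n]$.

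First I would exhibit an admissible bijection. Fix any $j_0\in J$ and set $\sigma(i) = (i+j_0)\bmod n$; then $\sigma(i)\in\Ga^+(i)$ for every $i$, so $\sigma\in\mathcal{F}_\Ga$, and $\sigma$ is a permutation of $[n]$ — indeed the translation by $j_0$ in $\ZZ_n$. Consequently the functional graph $\Ga_\sigma$ is a disjoint union of directed cycles covering all of $\VV(\Ga)$, so $\lim\Ga_\sigma = \Ga_\sigma$ and $|\VV(\lim\Ga_\sigma)| = n$. Applying Proposition~\ref{prop:lbnd-minsum-sigma} gives $\mmin{\Ga} \geq \max_{\sigma\in\mathcal{F}_\Ga}|\VV(\lim\Ga_\sigma)| \geq n$. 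Together with \eqref{triv_est_SnJp} this forces $\mmin{\Ga} = n$, i.e.\ $\inf_{\xx} S_{n;J,-\infty}(\xx) = n$, which is the claim.

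There is essentially no obstacle here: the only point needing a word of care is the small edge case $J=\{0\}$ (equivalently a pattern set reducing to $\{n\}$ after reduction mod $n$), where $\Ga$ is the loop graph and $S_{n;J,-\infty}(\xx) = \sum_i x_i/x_i = n$ identically — still consistent with the statement. One should also note that $J$ is assumed finite and nonempty, so $\mathcal{F}_\Ga \neq \emptyset$ and the maximum in \eqref{lbnd-minsum-sigma} is over a nonempty set. With these trivialities dispatched, the proof is a two-line deduction.
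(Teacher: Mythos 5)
Your proposal is correct and is exactly the argument the paper intends: the proposition is stated with no written proof precisely because it follows from Proposition~\ref{prop:lbnd-minsum-sigma} once one notes that the translation $i\mapsto (i+j_0)\bmod n$ for any fixed $j_0\in J$ is an admissible bijection, giving the lower bound $n$ that matches \eqref{triv_est_SnJp}. Nothing to add.
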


On the contrary, the general problem of finding $\mmin{\Ga}$, to be discussed now, does not appear to be simple at all.

Some preparations are required before we can  formulate Theorem~\ref{thm:chamber_optimization}.
Essentially they are aimed at a new level of understanding of the formula \eqref{sum-unary}. 

\smallskip
A {\it preferential arrangement}, or a {\em ballot}, 
%{\it block-order}\
% упорядоченные перестановки (в переводе Stanley), ранжирования
on a nonempty set $\Omega$ is a partition of $\Omega$ into blocks with a linear order on the set of the blocks.

For example, the set $\{1,2,3\}$ can be block-partitioned
as $(123)$, $(12)(3)$, $(13)(2)$, $(1)(23)$, and $(1)(2)(3)$.
A partition with $k=1,2,3$ blocks can be ordered in $k!$ ways,
which gives us the total of $1\cdot 1+3\cdot 2!+1\cdot 3!=13$
preferential arrangements on the 3-element set.

For a given graph $\Ga$, denote the set of all preferential arrangements of the set $\VV(\Ga)$ by $\Arr_\Ga$.

To every preferential arrangement $\mathfrak{A}\in\Arr_\Ga$ we put in correspondence certain subset $W_{\mathfrak{A}}$ of $\RR_+^{\VV(\Ga)}$.
Namely,
$W_{\mathfrak{A}}$ consists of vectors $\xx$ such that

\smallskip
(i) $x_{v}=x_{v'}$ if $v$ and $v'$ lie in the same block of $\mathfrak{A}$; 

\smallskip
(ii) If $v\in B$, $v'\in B'$, where
$B$ and $B'$ are distinct blocks and $B>B'$, then 
$x_v>x_{v'}$.

\smallskip
For example, if the nodes of $\Ga$ are labelled as $1,2,3$
and $\mathfrak{A}$ is
the preferential arrangement $(1,3)>(2)$, then 
$$
 W_{(13)>(2)}=\{\xx=(x_1,x_2,x_3)\mid x_1=x_3>x_2>0\}.
$$

For any vector $\xx>0$ there is a unique
preferential arrangement $\mathfrak{A}$ such that $\xx\in W_\mathfrak{A}$. We call $\mathfrak{A}$ the $\xx$-type arrangment.

\begin{definition}
\label{def:prefarr_red}
Let $\mathfrak{A}\in\Arr_\Ga$ be a preferential arrangement with blocks $B_1>\dots>B_k$.
Introduce the $k$ variables
$y_1,\dots,y_k$ corresponding to the blocks and put
$\yy=(y_1,\dots,y_k)$. Define the functions
$\alpha$ and $\beta$ from $\VV(\Ga)$ to $[k]$ as follows:

$\alpha(v)=j$ if $v\in B_j$;

$\beta(v)=j$ if $B_j$ is the $\mathfrak{A}$-minimal block among those that have nonempty intersection with $\Ga^+(v)$.

The {\em $\mathfrak{A}$-reduction}\ of the min-sum
$\Smin{\Ga}{\xx}$ is the function
\begin{equation}
\label{pa-qsum}
\Smin{\Ga}{\yy|\mathfrak{A}}=\sum_{v\in\VV(\Ga)}
\frac{y_{\alpha(v)}}{y_{\beta(v)}}.
\end{equation}
\end{definition}

\begin{lemma}
\label{lem:prefarr-red}
Suppose $\xx>0$ and $\mathfrak{A}\in\Arr_\Ga$ is an
$\xx$-type arrangement.
%$\xx\in W_{\mathfrak{A}}$. 
Let $\yy$ be the vector defined by $y_{\alpha(v)}=x_v$
(in the notation of Definition~\ref{def:prefarr_red}). Then
$$
 \Smin{\Ga}{\yy|\mathfrak{A}}=\Smin{\Ga}{\xx}.
$$
\end{lemma}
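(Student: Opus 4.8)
The plan is to verify, term by term, that each summand in the definition of $\Smin{\Ga}{\xx}$ equals the corresponding summand in \eqref{pa-qsum} once we substitute $y_{\alpha(v)} = x_v$ for all $v$. The $v$-th summand of $\Smin{\Ga}{\xx}$ is $x_v / \land(\xx\mid\Ga^+(v)) = x_v / \min_{w\in\Ga^+(v)} x_w$, and the $v$-th summand of $\Smin{\Ga}{\yy\mid\mathfrak{A}}$ is $y_{\alpha(v)}/y_{\beta(v)}$. Since $y_{\alpha(v)} = x_v$ by construction, it suffices to show that $\min_{w\in\Ga^+(v)} x_w = y_{\beta(v)}$.

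First I would unwind the fact that, because $\xx\in W_{\mathfrak{A}}$, the value $x_w$ depends only on which block $B_j$ contains $w$: by condition (i), $x_w = y_{\alpha(w)}$, and by condition (ii) together with the block ordering $B_1 > \dots > B_k$, the map $j\mapsto y_j$ is strictly decreasing, so $x_w$ is smallest precisely when $\alpha(w)$ is largest, i.e.\ when $w$ lies in the $\mathfrak{A}$-minimal block meeting $\Ga^+(v)$. By Definition~\ref{def:prefarr_red}, $\beta(v)$ is exactly the index of that minimal block, so $\min_{w\in\Ga^+(v)} x_w = y_{\beta(v)}$. This uses that $\Ga^+(v)\neq\emptyset$ (graphs are assumed to have nonempty out-neighbourhoods wherever this matters) so the minimal block is well-defined, and that all $x_w > 0$ so no division issues arise.

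Summing the equality of summands over $v\in\VV(\Ga)$ then gives $\Smin{\Ga}{\xx} = \Smin{\Ga}{\yy\mid\mathfrak{A}}$, which is the claim. I would also remark that $\yy$ is well-defined by the prescription $y_{\alpha(v)} = x_v$: if $\alpha(v) = \alpha(v')$ then $v,v'$ lie in the same block, so $x_v = x_{v'}$ by (i), and the assignment is consistent; moreover every index $j\in[k]$ is hit since every block $B_j$ is nonempty.

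I do not expect any real obstacle here — this lemma is essentially a bookkeeping statement making precise the intuitive content of Lemma~\ref{lem:unary-red} in the $\fname = \land$ case, namely that on the chamber $W_{\mathfrak{A}}$ the $\xx$-admissible choice function can be read off combinatorially from $\mathfrak{A}$. The only point requiring a moment's care is the direction of the monotonicity (smallest $x$-value $\leftrightarrow$ \emph{minimal} block, because the ordering on blocks is by decreasing value), and confirming that $\beta(v)$ as defined selects that block rather than, say, the block of $\alpha(v)$ itself.
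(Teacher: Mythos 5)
Your proof is correct and follows essentially the same route as the paper's: check that $\yy$ is well-defined via condition (i) of $W_{\mathfrak{A}}$, then verify termwise that $y_{\beta(v)}=\land(\xx\mid\Ga^+(v))$ from condition (ii) and the definition of $\beta$. The paper's own proof is just a terser version of the same argument, so no further comment is needed.
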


\begin{proof}
Note first that $y_j$ are defined correctly: if $j=\alpha(v)=\alpha(v')$ then $x_v=x_{v'}$ by definition of the set $W_{\mathfrak{A}}$.

It remains to see that $y_{\beta(v)}=\land(\xx|\Ga^+(v))$;
this follows from the definitions of the function $\beta$ and the set $W_{\mathfrak{A}}$.
\end{proof}

\begin{remark}
The assumptions of Lemma~\ref{lem:prefarr-red} imply the inequalities $y_1>\dots>y_k$. Conversely, if $\mathfrak{A}$ is some preferential arrangement, as in Definition~\ref{def:prefarr_red}, and $\yy\in\RR_+^k$ is a vector satisfying this inequalities, then the vector
$\xx$ defined by $x_v=y_{\alpha(v)}$, $v\in\VV(\Ga)$,
lies in $W_\mathfrak{A}$.
\end{remark}

In view of Lemma~\ref{lem:prefarr-red}, one would expect that the minimization problem for $\Smin{\Ga}{\cdot}$ can be reduced to the analysis of the sums \eqref{pa-qsum}. Or, one may ask, --- maybe --- just to the analysis of the unary sums $\Smin{\Ga_\sigma}{\xx}$, see \eqref{sum-unary}?

The latter expectation is too naive and false. However the former can indeed be carried through to the end. This will make up the 
%most of the 
remaining part of this section.
% (before Theorem~\ref{thm:min-sdr}). 
After necessary preliminaries we prove the main reduction theorem (Theorem~\ref{thm:chamber_optimization}) and then state an algorithm for finding $\mmin{\Ga}$.

We emphasize that the nature of the sum \eqref{pa-qsum} is more general than that of the unary sums: the same $y$-variable may occur in the numerators of several different terms. Let us describe the situation more formally.

Suppose a graph $\Ga$ and a preferential arrangement
$\mathfrak{A}$ on the set $\VV=\VV(\Ga)$ are fixed.
Define a new graph $\Ga_\mathfrak{A}$ as follows.

The set of nodes $\VV(\Ga_\mathfrak{A})$ is indexed by the blocks of $\mathfrak{A}$. That is, each node corresponds to one of the $y$-variables in Definition~\ref{def:prefarr_red}. 

The set of arcs $\AA(\Ga_\mathfrak{A})$ is in one-to-one correspondence with the set of nodes $\VV(\Ga)$: the node $v\in\VV$ gives rise to the arc $\alpha(v)\to\beta(v)$ of %the graph 
$\Ga_\mathfrak{A}$.
(Multiple arcs were not allowed in $\Ga$, but they may be present in $\Ga_\mathfrak{A}$. For instance, if $\mathfrak{A}$ consists of a single block, then $\Ga_\mathfrak{A}$ contains a single node and $|\VV(\Ga)|$ loops.)

Let us label the nodes of the graph $\Ga$ with numerical values: $v\mapsto x_v$. The induced labelling of the nodes of $\Ga_\mathfrak{A}$ is:  $B_j\mapsto y_j=x_{\alpha(j)}$. Moreover, there is an induced labelling of the {\em arcs}\ of $\Ga_\mathfrak{A}$: the arc $\alpha(v)\to\beta(v)$ is assigned the label $\xi_v=y_{\alpha(v)}/y_{\beta(v)}$.

The sum $\Smin{\Ga}{\yy|\mathfrak{A}}$ becomes simply 
$\sum_{v\in\VV} \xi_v$. However, the values $\xi_v$ are inherently dependent: to every loop $v_1\to v_2\to\dots\to v_h$ in $\Ga$ there corresponds the relation $\xi_{v_1}\xi_{v_2}\cdot\dots\cdot\xi_{v_h}=1$.
The problem of minimization of sums under constraints
of this type is studied in the author's work \cite{Sadov2020}, which hereinafter will be 
%frequently 
referred to several times.
 
\begin{figure}
\label{fig:ex-arr-red}
\begin{tikzpicture}
\begin{scope}[every node/.style={circle,thick,draw}]
    \node (A) at (0,3) {$x_1$};
    \node (B) at (3,3) {$x_2$};
    \node (C) at (3,0) {$x_3$};
    \node (D) at (0,0) {$x_4$};

    \node (P1) at (5,3) {$y_2$};
    \node (P2) at (8,3) {$y_3$};
    \node (P3) at (8,0) {$y_4$};
    \node (P4) at (5,0) {$y_1$};

    \node (Q1) at (10,3) {$y_1$};
    \node (Q2) at (12.6,3) {$y_3$};
    \node (Q3) at (11.3,0) {$y_2$};
\end{scope}

\begin{scope}[>={Stealth[black]},
              every node/.style={fill=white,circle},
              every edge/.style={draw=black,very thick}]
    \path [->] (A) edge (B);
    \path [->] (B) edge (D);
    \path [->] (B) edge[bend right=15]  (C);
    \path [->] (C) edge[bend right=15]  (B);
    \path [->] (C) edge  (D); 
    \path [->] (D) edge  (A); 

    \path [->] (P1) edge node{$\xi_1$}(P2);
    \path [->] (P2) edge[bend right=15] node{$\xi_2$} (P3);
    \path [->] (P3) edge[bend right=15] node{$\xi_3$} (P2);
    \path [->] (P4) edge node{$\xi_4$} (P1);

    \path [->] (Q1) edge node{$\xi_1$}(Q2);
    \path [->] (Q2) edge[bend right=15] node{$\xi_2$} (Q3);
    \path [->] (Q3) edge[bend right=15] node{$\xi_3$} (Q2);
    \path [->] (Q3) edge node{$\xi_4$} (Q1);
\end{scope}

\node at (1.3,-1.2) {(a) Graph $\Ga$};
\node at (6.5,-1.2) {(b) $\Ga_{\{(4)>(1)>(2)>(3)\}}$};
\node at (11.3,-1.2) {(c) $\Ga_{\{(1)>(3,4)>(2)\}}$};
\end{tikzpicture}

\caption{Graph $\Ga$ of Example~\ref{ex:gr-arr-red}
and the reduced graphs corresponding to two different preferential arrangements}
\end{figure}
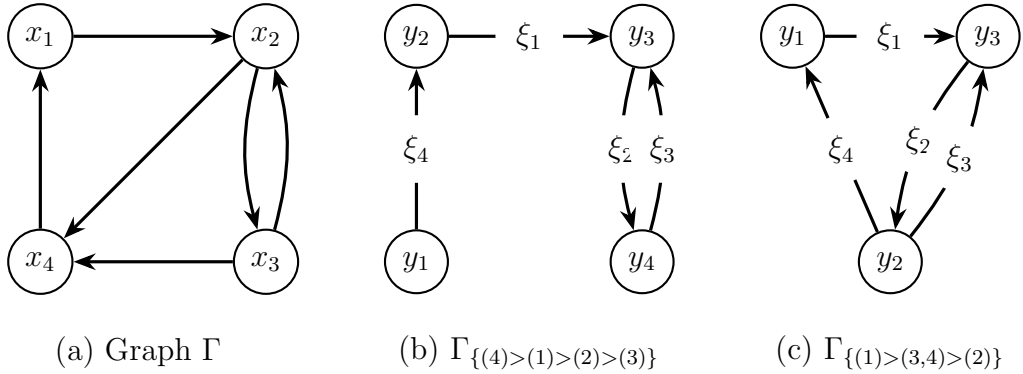

\begin{example}
\label{ex:gr-arr-red}
Consider the graph $\Ga$ with $\VV(\Ga)=[5]$ depicted
in Figure~\ref{fig:ex-arr-red}(a).
The corresponding min-sum is
$$
 \Smin{\Ga}{\xx}=\frac{x_1}{x_2}+\frac{x_2}{\land(x_3,x_4)}+\frac{x_3}{\land(x_2,x_4)}+\frac{x_4}{x_1}.
$$

Consider the preferential arrangement $\mathfrak{A}=\{(4)>(1)>(2)>(3)\}$.
If $\xx\in W_{\mathfrak{A}}$, then
$\land(x_3,x_4)=x_3$ and $\land(x_2,x_4)=x_2$.
The functions $\alpha$ and $\beta$ (see Definition~\ref{def:prefarr_red}) are:

%\bigskip
\begin{center}
\begin{tabular}{c||c|c|c|c}
$v$ & $1$ & $2$ & $3$ & $4$ \\
\hline
$\alpha(v)$ & $2$ & $3$ & $4$ & $1$ \\
\hline
$\beta(v)$ & $3$ & $4$ & $3$ & $1$ 
\end{tabular}
\end{center}

The relation between the $x$-variables and $y$-variables as defined in Lemma~\ref{lem:prefarr-red}
is: $y_1=x_4$, $y_2=x_1$, $y_3=x_2$, $y_4=x_3$. Hence
$$
 \Smin{\Ga}{\yy|\mathfrak{A}}=\frac{y_2}{y_3}+
\frac{y_3}{y_4}+\frac{y_4}{y_3}+\frac{y_1}{y_2}.
$$
In this case, the numerical labels of the arcs of the graph $\Ga_{\mathfrak{A}}$ are $\xi_1=y_2/y_3$, $\xi_2=y_3/y_4$, $\xi_3=y_4/y_3$, $\xi_4=y_1/y_2$.
The corresponding graph $\Gamma_{\mathfrak{A}}$ is shown in Fig.~\ref{fig:ex-arr-red}(b).
Note the cycle of length 2 in  $\Gamma_{\mathfrak{A}}$ and the relation $\xi_2\xi_3=1$.

Consider now the preferential arrangement $\mathfrak{A}=\{(1)>(3,4)>(2)\}$.
%If $\xx\in W_{\mathfrak{A}}$, then
%$\land(x_3,x_4)=x_3$ and $\land(x_2,x_4)=x_2$.
Here we have three blocks; the $y$-variables 
corresponding to a vector $\xx\in W_{\mathfrak{A}}$ are:
$y_1=x_1$, $y_2=x_3=x_4$, $y_3=x_2$.
The functions $\alpha$ and $\beta$ are:

%\bigskip
\begin{center}
\begin{tabular}{c||c|c|c|c}
$v$ & $1$ & $2$ & $3$ & $4$ \\
\hline
$\alpha(v)$ & $1$ & $3$ & $2$ & $2$ \\
\hline
$\beta(v)$ & $3$ & $2$ & $3$ & $1$ 
\end{tabular}
\end{center}

The relation of Lemma~\ref{lem:prefarr-red} becomes
$$
 \Smin{\Ga}{\yy|\mathfrak{A}}=\frac{y_1}{y_3}+
\frac{y_3}{y_2}+\frac{y_2}{y_3}+\frac{y_2}{y_1}.
$$
The graph $\Gamma_{\mathfrak{A}}$ is shown in Fig.~\ref{fig:ex-arr-red}(c). It has two independent cycles, $y_1\to y_3\to y_2\to y_1$ and $y_3\to y_2\to y_3$, corresponding to the relations
$\xi_1\xi_2\xi_4=\xi_2\xi_3=1$.
\end{example}

\begin{lemma}
\label{lem:minsum-minimizer}
Suppose $\Ga$ is a strongly connected graph. Then the symbol $\inf$ in the definition \eqref{infSgen} of $\mmin{\Ga}$ can be replaced by $\min$. That is, there exists a vector $\xx\in\RR_+^{\VV(\Ga)}$
such that $\Smin{\Ga}{\xx}=\mmin{\Ga}$.
\end{lemma}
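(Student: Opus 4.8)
The plan is to leverage the scale-invariance $S_\land^\Ga(\lambda\xx)=S_\land^\Ga(\xx)$ for a compactness argument; the only delicate point is showing that a minimizing sequence cannot escape to the boundary of the normalized domain. Since $\mmin{\Ga}\leq S_\land^\Ga(1,\dots,1)=|\VV(\Ga)|<\infty$, I may fix a minimizing sequence $\xx^{(m)}>0$ and, rescaling each term, assume $\max_{v\in\VV(\Ga)}x^{(m)}_v=1$, so that $\xx^{(m)}\in(0,1]^{\VV(\Ga)}$. By compactness of $[0,1]^{\VV(\Ga)}$, after passing to a subsequence I have $\xx^{(m)}\to\xx^*$ with $\max_v x^*_v=1$. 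If $\xx^*>0$ (all coordinates strictly positive), then $S_\land^\Ga$ is continuous in a neighbourhood of $\xx^*$, so $S_\land^\Ga(\xx^*)=\lim_m S_\land^\Ga(\xx^{(m)})=\mmin{\Ga}$ and $\xx^*$ is the required minimizer.

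The crux is therefore to prove $\xx^*>0$, and this is where strong connectedness enters. Put $Z=\{v:x^*_v=0\}$ and $P=\VV(\Ga)\setminus Z$; since $\max_v x^*_v=1$ we have $P\neq\emptyset$. Choose $m$ large enough that $S_\land^\Ga(\xx^{(m)})<C$, where $C:=\mmin{\Ga}+1$; as all summands of $S_\land^\Ga(\xx^{(m)})$ are positive, each obeys $x^{(m)}_v<C\cdot\land(\xx^{(m)}\mid\Ga^+(v))$. Now if some node $u$ has an out-neighbour $w_0\in\Ga^+(u)\cap Z$, then $\land(\xx^{(m)}\mid\Ga^+(u))\leq x^{(m)}_{w_0}\to 0$, so $x^{(m)}_u\to 0$ and hence $u\in Z$. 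Equivalently, $u\in P\Rightarrow\Ga^+(u)\subseteq P$: no arc leaves $P$. Were $Z$ nonempty, no node of $Z$ would be reachable from a node of $P$, contradicting strong connectedness of $\Ga$. Hence $Z=\emptyset$, i.e.\ $\xx^*\in(0,1]^{\VV(\Ga)}$, and the argument is complete.

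I expect the extraction of the convergent subsequence and the final continuity step to be routine; the one implication needing care is ``$\Ga^+(u)\cap Z\neq\emptyset\Rightarrow u\in Z$'', which rests on the uniform boundedness of the individual summands along the minimizing sequence (and hence on $\mmin{\Ga}<\infty$). A more ``structural'' alternative would use Lemma~\ref{lem:prefarr-red} to write $\mmin{\Ga}=\min_{\mathfrak{A}\in\Arr_\Ga}\inf\{S_\land^\Ga(\yy\mid\mathfrak{A}):y_1>\dots>y_k>0\}$, reparametrize each region by the ratios $t_i=y_{i+1}/y_i\in(0,1)$ so that $S_\land^\Ga(\yy\mid\mathfrak{A})$ becomes a finite sum of Laurent monomials in $t_1,\dots,t_{k-1}$, and minimize over the compact cube $[0,1]^{k-1}$; but then one must identify the faces on which the function equals $+\infty$ and verify that the resulting minimizer returns an \emph{admissible} $\xx$, which is messier than the direct argument above. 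Accordingly I would present the compactness proof.
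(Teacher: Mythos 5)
Your proof is correct and takes essentially the same route as the paper's: both are compactness arguments that use the per-arc ratio bound $x_v\le C\cdot\land(\xx\mid\Ga^+(v))$ (valid whenever $\Smin{\Ga}{\xx}\le C$) together with strong connectedness to keep a minimizing sequence away from the boundary of the normalized domain. The only difference is cosmetic: the paper chains these ratio bounds along paths of length at most $n-1$ to confine minimizers to an explicit compact cube and then invokes continuity, whereas you normalize by the maximum, extract a convergent subsequence, and exclude zero coordinates of the limit via the ``no arc leaves $P$'' argument.
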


\begin{proof}
Let $\mu=\mmin{\Ga}$, $\mu_0=\min(\mu,1)$, and $n=|\VV(\Ga)|$.

For any $v$ and $v'$ such that $v'\in\Ga^+(v)$ we have
$\Smin{\Ga}{\xx}\geq x_{v}/x_{v'}$, hence
$x_{v}/x_{v'}\geq \mu$ or, equivalently,
$x_{v'}/x_{v}\leq \mu^{-1}$.

Since $\Ga$ is strongly connected,
for any two nodes $v'$ and $v''$ there is a path from
$v''$ to $v'$ with at most $n-1$ arcs.
It follows that $\mu_0^{1-n}\geq x_{v''}/x_{v'}\geq\mu_0^{n-1}$.

Imposing the normalization condition $x_{v^*}=1$ for
some $v^*\in\VV(\Ga)$, we see that
$\Smin{\Ga}{\xx}>\mmin{\Ga}$ outside the cube
$K=\{\xx\mid \mu_0^{1-n}\leq x_v\leq \mu_0^{n-1}, \;\forall v\in\VV(\Ga)\}$. 
Since the function $\Smin{\Ga}{\cdot}$ is continuous on $K$, it does attain the minimum value at some $\xx\in K$.
\end{proof}

Any vector $\xx$ such that $\Smin{\Ga}{\xx}=\mmin{\Ga}$
is called a {\em minimizer} (for $\Smin{\Ga}{\cdot}$).
Clearly, if $\xx$ is a minimizer, then so is $t\xx$
for any $t>0$.
%We will sometimes need minimizers normalized in a special way: we call $\xx$ a {\em normal minimizer}\ if $\prod_{v\in\VV(\Ga)} x_v=1$. Lemma~\ref{lem:minsum-minimizer} guarantees the existence of a normal minimizer if $\Ga$ is strongly connected.

%The next theorem is crucial for our algorithm to compute $\mmin{\Ga}$.

\begin{theorem}
\label{thm:chamber_optimization}
Suppose $\Ga$ is a strongly connected graph and $\xx^*$
is a minimizer for $\Smin{\Ga}{\cdot}$. Let $\mathfrak{A}=(B_1>\dots>B_k)$
be the preferential arrangement of $\xx^*$-type. Let
$\yy^*\in\RR_+^k$ be the vector defined by $y^*_{\alpha(v)}=x^*_v$ (in the notation of Definition~\ref{def:prefarr_red}).
Then $\yy^*$ is a minimizer for the 
function $\yy\mapsto\Smin{\Gamma}{\yy|\mathfrak{A}}$.
%, unique up to a positive multiple.
\end{theorem}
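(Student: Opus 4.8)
The plan is to combine Lemma~\ref{lem:prefarr-red} with the convexity of the reduced sum in logarithmic coordinates (in essence, the situation analyzed in \cite{Sadov2020}). Write $g(\yy)=\Smin{\Ga}{\yy\mid\mathfrak{A}}=\sum_{v\in\VV(\Ga)}y_{\alpha(v)}/y_{\beta(v)}$, a smooth function on $\RR_+^k=(0,\infty)^k$. Since $\mathfrak{A}$ is the $\xx^*$-type arrangement, Lemma~\ref{lem:prefarr-red} gives $g(\yy^*)=\Smin{\Ga}{\xx^*}=\mmin{\Ga}$. Hence it suffices to prove that $g(\yy)\geq\mmin{\Ga}$ for every $\yy\in\RR_+^k$.

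The first step is to show that $\yy^*$ is a critical point of $g$. Because $\mathfrak{A}$ is the \emph{precise} type of $\xx^*$ (strict inequalities between consecutive blocks), the coordinates of $\yy^*$ are strictly decreasing, so $\yy^*$ lies in the open cone $C=\{\yy\mid y_1>\cdots>y_k>0\}$. On $C$ the map $\yy\mapsto\xx$ given by $x_v=y_{\alpha(v)}$ is a bijection onto the chamber $W_\mathfrak{A}$ (cf.\ the remark following Lemma~\ref{lem:prefarr-red}), and along it $g(\yy)=\Smin{\Ga}{\xx}$. As $\xx^*\in W_\mathfrak{A}$ and $\xx^*$ minimizes $\Smin{\Ga}{\cdot}$ over all admissible vectors, the point $\yy^*$ minimizes $g$ over $C$; since $C$ is open and $g$ is smooth there, $\nabla g(\yy^*)=0$.

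The second step upgrades ``critical point'' to ``global minimizer on $\RR_+^k$''. Substituting $y_j=e^{z_j}$, every summand becomes $y_{\alpha(v)}/y_{\beta(v)}=e^{z_{\alpha(v)}-z_{\beta(v)}}$, an exponential of an affine function of $\zz\in\RR^k$; hence $g$, viewed as a function of $\zz$, is convex on $\RR^k$. A stationary point of a convex function is a global minimum, and $\zz^*=(\log y^*_1,\dots,\log y^*_k)$ is stationary by the previous step (the change of variables is a diffeomorphism, so $\partial g/\partial z_j=y_j\,\partial g/\partial y_j$ vanishes at $\yy^*$). Therefore $g(\yy^*)\leq g(\yy)$ for all $\yy\in\RR_+^k$, which is exactly what was needed. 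Equivalently, one may invoke the result of \cite{Sadov2020} on minimizing sums $\sum_v\xi_v$ subject to the multiplicative relations along the cycles of $\Ga_\mathfrak{A}$; the present argument is the special case in which each $\xi_v$ is a ratio of two block-variables.

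The delicate point is the first step — that $\yy^*$ is \emph{interior} to $C$, hence stationary; this is where it is essential that $\mathfrak{A}$ equal the $\xx^*$-type and not merely some refinement or coarsening of it. For a generic preferential arrangement $\mathfrak{A}$ the infimum of $\Smin{\Ga}{\cdot\mid\mathfrak{A}}$ over $\RR_+^k$ can fall strictly below $\mmin{\Ga}$ and need not be attained (the graph $\Ga_\mathfrak{A}$ may fail to be strongly connected, so some block-variable can be driven to $0$), so the statement is genuinely tied to the arrangement realized by a minimizer. As a by-product, the argument also shows that for that arrangement the infimum over $\RR_+^k$ is attained, namely at $\yy^*$.
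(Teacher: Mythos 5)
Your proof is correct, and it takes a genuinely different route from the paper's. Both arguments share the key first observation: because $\mathfrak{A}$ is the exact type of $\xx^*$, the point $\yy^*$ sits in the \emph{open} cone $C=\{y_1>\dots>y_k>0\}$, on which $\yy\mapsto(y_{\alpha(v)})_v$ is a bijection onto $W_{\mathfrak{A}}$ and $\Smin{\Ga}{\yy\mid\mathfrak{A}}$ agrees with $\Smin{\Ga}{\xx}$; global minimality of $\xx^*$ therefore forces $\nabla\Smin{\Ga}{\cdot\mid\mathfrak{A}}(\yy^*)=0$. From there the paths diverge. The paper splits into two cases according to whether $\Ga_{\mathfrak{A}}$ has a node of indegree zero (ruling that case out by a monotone perturbation) and, in the remaining case, passes to the variables $\xi_v=y_{\alpha(v)}/y_{\beta(v)}$ constrained by products over independent cycles of $\Ga_{\mathfrak{A}}$, invoking the uniqueness of the critical point for that constrained problem from the companion paper \cite{Sadov2020}. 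You instead substitute $y_j=e^{z_j}$, note that each summand $e^{z_{\alpha(v)}-z_{\beta(v)}}$ is convex in $\zz$, and conclude that the stationary point $\zz^*$ is a global minimum; this absorbs the paper's Case~1 automatically (an indegree-zero block would make the corresponding partial derivative strictly positive, contradicting stationarity) and needs no external uniqueness result. Your argument is shorter and self-contained; what the paper's framing buys is the explicit $\xi$-variable, cycle-constrained formulation that is then reused in step~3 of the algorithm for actually computing $\mmin{\Ga}$ by solving polynomial systems. Your closing remark that for a general arrangement the infimum can drop below $\mmin{\Ga}$ and need not be attained is accurate and correctly identifies why interiority of $\yy^*$ in $C$ is the load-bearing step.
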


\begin{proof}
According to Remark after Lemma~\ref{lem:prefarr-red},
the vector $\yy^*$ lies in the cone
$$
K=\{\yy\mid y_1>\dots>y_k>0\}.
$$
The fact that $K$ is an {\em open}\ set is crucial for our argument.

Consider two cases (cf.~Fig.~\ref{fig:ex-arr-red}(b) and (c)).

\smallskip
{\em Case 1}: The graph $\Ga_{\mathfrak{A}}$ has a node of indegree $0$. Equivalently, some variable
$y_j$ never appears as a denominator in the sum
$\Smin{\Ga}{\yy|\mathfrak{A}}$ (that is, in the right-hand side of \eqref{pa-qsum}). Let $\eps>0$ be so small that the vector $\yy^\#$ obtained from $\yy^*$
by changing $y^*_j$ into $y^*_j-\epsilon$ still lies in $K$. Then the vector $\xx^\#$ with components $x^\#_v=y^\#_{\alpha(v)}$ lies in $W_{\mathfrak{A}}$.
By Lemma~\ref{lem:prefarr-red} we have
$\Smin{\Ga}{\yy^*|\mathfrak{A}}=\Smin{\Ga}{\xx^*}$
and
$\Smin{\Ga}{\yy^\#|\mathfrak{A}}=\Smin{\Ga}{\xx^\#}$.

Clearly, the function $\Smin{\Ga}{\yy}$ is monotone (increasing) with respect to $y_j$, so
$\Smin{\Ga}{\yy^\#|\mathfrak{A}}<\Smin{\Ga}{\yy^*|\mathfrak{A}}$. 
Hence, $\Smin{\Ga}{\xx^\#}<\Smin{\Ga}{\xx^*}$, a contradiction.

\smallskip
{\em Case 2}: The graph $\Ga_{\mathfrak{A}}$ does not have nodes of indegree $0$. Equivalently, every variable $y_j$ appears as the denominator of some term in the right-hand side of \eqref{pa-qsum}. 

The optimization problem
\beq{arr-quotient}
\Smin{\Ga}{\yy|\mathfrak{A}}\to\inf,
\qquad \yy>0,
\eeq
is equivalent, by putting $\xi_v=y_{\alpha(v)}/y_{\beta(v)}$, to the problem
\beq{arr-prodcons}
\sum_{v\in\VV(\Ga)} \xi_v\to\inf,
\qquad \xi_v>0,\quad
\prod\xi_v^{\epsilon_j(v)}=1,\quad j=1,\dots,m,
\eeq
where $m$ is the number of independent cycles in the graph $\Ga_{\mathfrak{A}}$ and $\eps_j(v)=1$ if the arc
$\alpha(v)\to\beta(v)$ belongs to the cycle number $j$, otherwise $\eps_j(v)=0$. (Cf.~\cite[Sec.~I.4.1, Problem~40]{Sadov2020}.)

By an argument similar to the proof of Lemma~\ref{lem:minsum-minimizer} we see that $\inf$ in \eqref{arr-prodcons} can be replaced by $\min$ and the minimzation can be carried over a compact set. 
%Then, by the terminology of \cite[Sec.~I.1]{Sadov2020}, the problem \eqref{arr-prodcons} is {\em compact} and all variables $\xi_j$ are {\em essential}. 
%$\Smin{\Ga}{\yy|\mathfrak{A}}$. 
Moreover, by 
\cite[Problem~15]{Sadov2020}, the minimizer $\vec{\xi}^{**}$ for the problem \eqref{arr-prodcons} is determined as the unique critical point (the solution of a system of plolynomial equations obtained through Lagrange's multipliers method). 

Let ${\vec\xi}^*$ be the vector corresponding to $\yy^*$, i.e.\ $\xi^*_v=y^*_{\alpha(v)}/y^*_{\beta(v)}$ for
all $v\in\VV(\Ga)$.
Take some neighborhood $\mathcal{O}\subset K$ of $\yy^*$ (it exists, since $K$ is open). It is easy to see that the map $\yy\mapsto\vec{\xi}$ is open (in fact, its restriction on some ``normalizing'' hypersurface, say, $\sum_{j=1}^k y_j=1$ is a homeomorphism), so the image $\Omega$ of $\mathcal{O}$ is an open neighborhood of ${\vec\xi}^*$.

Suppose that ${\vec\xi}^{**}\neq{\vec\xi}^{*}$. 
Then, since ${\vec\xi}^{*}$ is not a critical point (which is unique), there exists some ${\vec\xi}^\#\in\Omega$ such that $\sum_v\xi_v^\#<\sum_v\xi_v^*$. Hence there exists
$\yy^\#\in\mathcal{O}$ such that $\Smin{\Ga}{\yy^\#|\mathfrak{A}}<\Smin{\Ga}{\yy^*|\mathfrak{A}}$. Now, as in Case 2, we find the corresponding
vector $\xx^\#\in W_{\mathfrak{A}}$ such that 
$\Smin{\Ga}{\xx^\#}<\Smin{\Ga}{\xx^*}$ and conclude that $\xx^*$ is not a minimizer, a contradiction.

\smallskip
The conclusion is: Case 2 takes place and $\vec{\xi}^{**}=\vec{\xi}^*$; so $\yy^*$ is a minimizer for the optimization problem \eqref{arr-quotient}.
Q.E.D.
\end{proof}

\begin{remark}
A minimizer for $\Smin{\Ga}{\cdot}$ is not necessarily unique up to a scalar multiple.
Consider, for example,
$$
\Smin{\Ga}{\xx}=\frac{x_1}{\land(x_1,x_2)}+\frac{x_2}{x_3}+\frac{x_3}{x_2}.
$$
Clearly, $\Smin{\Ga}{\xx}\geq 3$ for any $\xx$. The set of $\xx^*$ such that $\Smin{\Ga}{\xx}=3$ is two-parametric: $x^*_1=a, x^*_2=x^*_3=b$, where $0<a\leq b$.

This example also shows that the graph $\Ga_{\mathfrak{A}}$ corresponding to the preferential arrangment $\mathfrak{A}\ni\xx^*$ is not necessarily (weakly) connected.
\end{remark}

Based on Theorem~\ref{thm:chamber_optimization}, we
formulate a theoretical algorithm for computation of $\mmin{\Ga}$ for the given graph $\Ga$.

\begin{enumerate}
\item 
Make the list $\Arr_\Ga$ of all preferential arrangments  of the set ${\VV(\Ga)}$.

\item 
Select the subset $\Arr'_\Ga$ comprising those preferential arrangments $\mathfrak{A}$ for which the graph $\Ga_{\mathfrak{A}}$ does not have a node of indegree zero. (Cf.~Case~1 of proof of Theorem~~\ref{thm:chamber_optimization}.)

\item 
For every $\mathfrak{A}\in \Arr'_\Ga$ find a minimizer $\yy^{(\mathfrak{A})}$ (unique up to a positive multiple) for the problem \eqref{arr-quotient}
by solving the corresponding system of polynomial equations. (The relevant material in \cite{Sadov2020} is: Problem~40, Eqs.~(11), (7), and Problem~21.)
Put $\mmin{\Ga,\mathfrak{A}}=\Smin{\Ga}{\yy^{(\mathfrak{A})}\mid \mathfrak{A}}$. 

\item 
Select the subset $\Arr''_\Ga\subset\Arr'_\Ga$ comprising those preferential arrangments $\mathfrak{A}=(B_1>\dots>B_k)$ ($k$ depends on $\mathfrak{A}$) for which the components $y^{(\mathfrak{A})}_j$, $j=1,\dots,k$, form a strictly decreasing sequence.
%(preferrential arrangements consistent with the minimizer)

\item 
The answer is given by the formula
\beq{smin-theor-answer}
\mmin{\Ga}
=\min_{\mathfrak{A}\in\Arr''_\Ga}
\mmin{\Ga,\mathfrak{A}}.
\eeq
\end{enumerate}

\begin{remark}
We have shown that the problem of finding $\mmin{\Ga}$ can be reduced to solution of finitely many systems of nonlinear algebraic equations.
However the above algorithm has little value as a practical method because of its tremendous combinatorial complexity: the number of preferential arrangements on a $n$-element set is asymptotic to
$\frac{1}{2}(\log 2)^{-n-1} n!$ \cite[Eq.~(3.4.26)]{Bergeron-Labelle-Leroux_1998}.
It would be interesting to devise 
%The author is not aware of 
a method of lower complexity.
\end{remark}

%Notwithstanding this remark, Theorem~\ref{thm:chamber_optimization} is not a dead-end result. We close this section with a demonstration of its theoretical application.

%(Applications - in Extr.Prob.section?)

\section{Min-sums II: Extremal problems}
\label{sec:min-extr}

The optimization problem:
find $\mmin{\Ga}$ for the given $\Ga$ --- will be solved here only in some special cases. The question that turned out more amenable and will be explored rather satisfactorily is an extremal graph problem undestood in the standard sense as ``determining the extreme value of some graph parameter over some class of graphs''
\cite[p.~373]{West_2001}. The parameter in present case is $\mmin{\Ga}$
and the class of (directed) graphs consists of graphs of positive minimum outdegree with given number of nodes (and possibly additional constraints).  

\iffalse
\begin{quote}
Given a property $Q$ and an invariant $\mu$ for a class $\mathcal{H}$ of set systems, what is the maximum (minimum) of $\mu$
over all set systems in $\mathcal{H}$ satisfying $Q$?
\end{quote}
B. Bollobas, Combinatorics, set systems, hypergraphs
--- p.53 (Extremal problems)
\fi

We denote by $\GraphsNN{n}$ the class of (isomorphism classes of) directed graphs with $n$ nodes, positive minimum outdegree, positive minimum indegree (so that every variable $x_v$ in the corresponding sum appears in the denominator of at least one term) and no multiple arcs.
\smallskip

\iffalse
We will need some combinatorial terminology.

With the graph $\Ga$ one can associate the so called {\em transversal matroid} \cite{MatroidText}.
We will not need the matroid proper, only the notion of its rank.

The rank $r_\Ga$ of the transversal matroid associated with graph $\Ga$ is the maximum number $r$ such that
there exists a partial system of distinct representatives of cardinality $r$ for the family $\{\Ga^+(v)\}$, that is,
two sets of cardinality $r$, $\{u_1,\dots,u_r\}\subset\VV(\Ga)$ and $\{v_1,\dots,v_r\}\subset\VV(\Ga)$, such that 
$u_i\in\Ga^+(v_i)$.

The rank has the greatest possible value $r_\Ga=|\VV(\Ga)|$ if and only if the
system $\{\Ga^+(v)\}_{v\in\VV(\Ga)}$ admits an SDR.
\fi

\iffalse
%In "Prodconstraints":
Let $\mathfrak{G}_m$ denote the set of of all (isomorphism classes of) strongly connected, not necessarily simple (di)graphs with
$m$ arcs and let $\mathfrak{G}_{m,n}$ ($n\leq m$) denote the subset of $\mathfrak{G}_m$ consisting of graphs with $n$ nodes.
\fi

We begin with the question about the maximum possible value
of $\mmin{\Ga}$ for $\Ga\in\GraphsNN{n}$.
If $\Ga$ is a graph such that set system $\{\Ga^+(v)\}$ admits an SDR, or, equivalently, there exists a $\Ga$-admissible bijection of $\VV(\Ga)$,
we will say for short that {\em $\Ga$ admits and SDR}.

Let us revisit Proposition~\ref{prop:lbnd-minsum-sigma}
and consider its simplest case where $\Ga$ admits an SDR.
Then the upper bound \eqref{lbnd-minsum-sigma} for $\mmin{\Ga}$ coinsides with lower bound in \eqref{gr-psum-lbnd}, so $\mmin{\Ga}=n$.
%The natural question arises, whether 
But does 
the latter equality 
%occurs 
take place
{\em only if}\ there exists an SDR?
The answer in affirmative is given below.

\begin{theorem} 
\label{thm:min-sdr}
(a) Let $\Ga\in\GraphsNN{n}$. The upper bound $\mmin{\Ga}=n$ is attained if and only if $\Ga$ admits an SDR. 

(b) The maximum value excluding SDR-admitting graphs is 
\beq{maxminsum_no_SDR}
 \max\left(\mmin{\Ga}\mid \Ga\in\GraphsNN{n}, \;\text{\rm $\Ga$ does not admit an SDR}\right) = n-\Delta_n,
\eeq
where $\Delta_{2k-1}=\Delta_{2k}=2k-1-2\sqrt{k(k-1)}$. %If the class $\GraphsN{n}$ in the left-hand side of \eqref{maxminsum_no_SDR} is reduced to $\GraphsNN{n}$, then $\Delta(n)$ in the right-hand side is to be replaced by $\Delta_{n-1}$.
(Note: $\Delta_n\sim (2n)^{-1}$ as $n\to\infty$.)
\end{theorem}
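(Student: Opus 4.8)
Part (a) is the easy direction, and both implications should be nailed down first. If $\Ga$ admits an SDR, then (as already observed just before the statement, combining Proposition~\ref{prop:lbnd-minsum-sigma} with the left inequality in \eqref{gr-psum-lbnd}) we get $n\leq\mmin{\Ga}\leq n$. For the converse, suppose $\Ga$ does \emph{not} admit an SDR; I want a uniform gap $\mmin{\Ga}\leq n-\delta$ for some $\delta>0$. By Hall's theorem there is a ``deficient'' set $\mathcal{U}\subset\VV(\Ga)$ with $|\cup_{v\in\mathcal{U}}\Ga^+(v)|<|\mathcal{U}|$; write $\mathcal{W}=\cup_{v\in\mathcal{U}}\Ga^+(v)$, $|\mathcal{U}|=u$, $|\mathcal{W}|=w<u$. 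The idea is to exploit that the $u$ terms indexed by $\mathcal{U}$ all have their minima taken over the small set $\mathcal{W}$: set $x_v=t$ for $v\in\mathcal{W}$ and $x_v=1$ elsewhere, with $t$ large. Then each of the $u$ terms with $v\in\mathcal{U}$ equals $x_v/\land(\xx|\Ga^+(v))$ with denominator $\geq\min(1,t)=1$, but more to the point a weighting argument (or better: applying Lemma~\ref{lem:unary-red} and Proposition~\ref{prop:unary} to a carefully chosen admissible $\sigma$) shows these $u$ terms contribute something strictly less than $u$ in the limit, because the restriction $\sigma|_{\mathcal{U}}$ cannot be a bijection onto $\mathcal{U}$. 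This forces $\mmin{\Ga}<n$, hence the ``only if''.

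Part (b) is the substantive part. The quantity $n-\Delta_n$ with $\Delta_n=2k-1-2\sqrt{k(k-1)}$ strongly suggests the extremal graph is the ``bipartite-like'' construction: split $\VV$ into $\mathcal{U}$ of size $u=k$ (or $k$ vs.\ $k-1$ depending on parity) and $\mathcal{W}$ of size $w=u-1$, make $\Ga^+(v)=\mathcal{W}$ for every $v\in\mathcal{U}$, and on the complement put a structure admitting an SDR (e.g.\ a single cycle through $\mathcal{W}$ together with $\mathcal{U}$) so that the graph is in $\GraphsNN{n}$ but Hall fails only through $\mathcal{U}$. For such a graph the min-sum, after the arrangement reduction of Section~\ref{sec:min}, splits: the $w$ variables in $\mathcal{W}$ carry a subsum worth exactly $w$ (they form a sub-structure with an SDR), while the $u$ terms from $\mathcal{U}$ all read $y_{\alpha(v)}/y_{\beta(v)}$ where $\beta(v)$ ranges over the block-indices of $\mathcal{W}$. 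Optimizing, the worst case is when all of $\mathcal{U}$ sits in one block at value $a$ and all of $\mathcal{W}$ at value $b\le a$, giving a contribution $u\cdot(a/b) + $ (terms inside $\mathcal{W}$). Minimizing $u(a/b)+w(b/a)$ over $a\ge b>0$: if $u>w$ the unconstrained optimum $a/b=\sqrt{w/u}<1$ violates $a\ge b$, so the constrained optimum is at $a=b$, giving $u+w$ — that's the SDR value $n$, not what we want. The correct configuration must therefore be the reverse: $\mathcal{W}$ \emph{above} $\mathcal{U}$, i.e.\ the $u$ deficient terms appear as $b/a$ (small) and one gets $w\cdot(a/b)+u\cdot(b/a)$ with the \emph{internal} $\mathcal W$-cycle forcing $a>b$; then with $r=a/b>1$ free, minimize $w r + u/r$ to get $2\sqrt{uw}$, so $\mmin{\Ga}=2\sqrt{uw}=2\sqrt{k(k-1)}$, and since $u+w=2k-1$ this is $n-\Delta_n$ when $n=2k-1$, and the $n=2k$ case is handled by adding one ``free'' node (a loop or a pendant cycle contributing $1$), shifting both $n$ and the value by $1$ so $\Delta_{2k}=\Delta_{2k-1}$.

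The two things that need real work are: (1) the \emph{upper bound} — proving that \emph{every} non-SDR graph $\Ga\in\GraphsNN{n}$ satisfies $\mmin{\Ga}\le n-\Delta_n$, not just the constructed one. Here I would take a minimizer $\xx^*$ (Lemma~\ref{lem:minsum-minimizer}), pass to the arrangement graph $\Ga_{\mathfrak A}$ of Theorem~\ref{thm:chamber_optimization}, and argue that Hall-deficiency of $\Ga$ persists in a usable form at the level of $\Ga_{\mathfrak A}$; then a counting/weighting inequality bounds $\sum\xi_v$ from below by... no — from above, using that among the $n$ edge-labels $\xi_v$ the product relations (one per independent cycle) together with the deficiency force at least one ``unmatched'' imbalance, and an inequality of the form $\sum\xi_v\le$ (extremal value) follows from convexity / the AM–GM-type results of \cite{Sadov2020}. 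This is the main obstacle: showing the bipartite configuration is \emph{worst} among all deficient graphs, which is an extremal-combinatorics optimization over the discrete data $(u,w)$ with $w<u\le n$ combined with the continuous optimization $\min(wr+u/r)=2\sqrt{uw}$, and then maximizing $2\sqrt{uw}$ subject to $u+w\le n$, $w=u-1$ (the binding deficiency constraint). (2) Checking the parity bookkeeping so that the formula collapses to the stated $\Delta_{2k-1}=\Delta_{2k}$, and verifying that the extremal graph genuinely lies in $\GraphsNN{n}$ (positive min in- and out-degree, simple). I expect (1) — the ``worst deficient graph is the balanced complete-bipartite-type one'' reduction — to be where essentially all the difficulty lies.
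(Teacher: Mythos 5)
Your extremal construction and the appeal to Hall's theorem are the right starting points, but there is a genuine gap at exactly the place you flag as ``the main obstacle'': the upper bound $\mmin{\Ga}\le n-\Delta_n$ for \emph{every} non-SDR graph. The route you propose for it --- passing to a minimizer, the arrangement graph $\Ga_{\mathfrak{A}}$, and the product-constraint machinery of \cite{Sadov2020} --- is misdirected (an upper bound on the minimum is obtained by exhibiting a good test vector, not by analyzing critical-point equations at the minimizer) and is never carried out. The paper obtains the whole of part (b)'s upper bound, and the ``only if'' of part (a), from a single quantitative computation with precisely the two-valued test vector that you introduce and then abandon. Concretely: with $\Omega$ the Hall-deficient set, $\Phi=\cup_{v\in\Omega}\Ga^+(v)$, $x_v=1$ on $\Phi$ and $x_v=t\le 1$ off $\Phi$, one gets
$$
\Smin{\Ga}{\xx}\le |\Omega\cap\Phi|+t\,|\Omega\setminus\Phi|+\frac{|\Phi\setminus\Omega|}{t}+|\VV\setminus(\Omega\cup\Phi)|,
$$
and optimizing over $t\in(0,1]$ (legitimate since $b=|\Phi\setminus\Omega|<a=|\Omega\setminus\Phi|$ places the optimum $t=\sqrt{b/a}$ in $(0,1]$) yields $\mmin{\Ga}\le n-\delta(a,b)$ with $\delta(a,b)=a+b-2\sqrt{ab}>0$. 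This proves (a) and reduces the upper bound in (b) to the elementary discrete minimization of $\delta(a,b)$ over $a-b\ge 1$, $a+b\le n$, whose minimum is $\delta(k,k-1)=\Delta_n$ at $k=\lfloor(n+1)/2\rfloor$. Note also that your version of the part (a) argument (``$t$ large \dots in the limit'') fails as stated: as $t\to\infty$ a term indexed by $v\in\mathcal{W}\setminus\mathcal{U}$ has numerator $t$ and possibly bounded denominator, so the sum blows up; the optimization over finite $t$ is essential, and it is exactly what produces the constant $\Delta_n$.

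For the attainability half of (b) your plan is essentially right (the balanced bipartite-type graph with $|\mathcal{U}|=k$, $|\mathcal{W}|=k-1$, plus $n-2k+1$ further nodes each contributing $1$), though your back-and-forth about which block sits above which is muddled. The clean version takes $\Ga^+(v)=\Phi$ for $v\in\Omega$ and $\Ga^+(v)=\VV$ for all other $v$, and bounds the sum below by $kp/q+(k-1)q/p+(n-2k+1)\ge 2\sqrt{k(k-1)}+(n-2k+1)=n-\Delta_n$, where $p$ and $q$ are the minima of $\xx$ over $\VV$ and over $\Phi$; no arrangement reduction is needed. You would also still have to justify that no other deficiency pattern $(a,b)$ beats the balanced one --- that is the discrete optimization above, which your plan mentions only in passing as an unresolved step.
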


\begin{proof}
(a) We have to prove the ``only if'' part.
Let $\VV=\VV(\Ga)$ and $n=|\VV|$. Suppose, contrary to what the theorem claims, that $\mmin{\Ga}=n$, yet $\Ga$ does not admit an SDR.
Then by Hall's theorem there exist sets $\Omega\subset \VV$ and $\Phi\subset \VV$
such that $\Phi=\cup_{v\in\Omega}\Ga^+(v)$ and
$|\Phi|<|\Omega|$. Clearly,
$$
 \Smin{\Ga}{\xx}\leq \sum_{v\in\Omega}\frac{ x_v}{\min\limits_{v\in\Phi}x_v}+\sum_{v\in\VV\setminus\Omega}\frac{ x_v}{\min\limits_{v\in\VV}x_v}.
$$
Let us take the vector $\xx$ with components
$$
 x_v=\begin{cases}
1,\quad\text{if $v\in\Phi$};\\
t,\;\;\text{if $v\in\VV\setminus\Phi$},
\end{cases}
$$
where $0<t\leq 1$.
Then
$$
\Smin{\Ga}{\xx}\leq |\Omega\cap\Phi|+t|\Omega\setminus\Phi|
+\frac{|\Phi\setminus\Omega|}{t}
+|\VV\setminus(\Omega\cup\Phi)|.
$$
Put $a=|\Omega\setminus\Phi|$, $b=|\Phi\setminus\Omega|$. We have $a-b=|\Omega|-|\Phi|>0$.
Also,
$$
\min_{0<t\leq 1}\left(ta+\frac{b}{t}\right)=2\sqrt{ab}.
$$
Put $\delta(a,b)=a+b-2\sqrt{a+b}$. Clearly, $\delta(a,b)>0$. Hence
$$
 \Smin{\Ga}{\xx}\leq |\Omega\cap\Phi|+|\Omega\setminus\Phi|
+|\Phi\setminus\Omega|
+|\VV\setminus(\Omega\cup\Phi)|-\delta(a,b)<n.
$$
The assumption that $\Ga$ does not admit an SDR has led to a contradiction.

\smallskip
(b) To prove \eqref{maxminsum_no_SDR}, note first that the function $\delta(a,b)$ decreases in $b$ when $a$ is fixed. Since $a+b\leq n$ and $b\leq a-1$, we have
$\delta(a,b)\geq\delta(a,\min(n-a,a-1))$. 

The function $a\mapsto\delta(a,a-1)$ decreases for $a\geq 1$, while the function $a\mapsto\delta(a,n-a)=n-\sqrt{a(n-a)}$ increases for $n/2\leq a\leq n$. It follows that for any fixed $n\geq 2$
and $k=\left\lfloor(n+1)/2\right\rfloor$
$$
 \min\delta(a,b)=\delta(k,k-1)=\Delta_{n}.
$$

The obtained lower bounds for $\delta(a,b)$ imply corresponding upper bounds for $\mmin{\Ga}$.
It remains to show that those upper bounds are attainable.

Let $\VV(\Ga)=[1,n]$, $\Omega=[1,k]$, $\Phi=[n-k+2,n]$
(note: $n-k+2>k$). Define $\Ga\in\GraphsNN{n}$ by
$\Ga^+(i)=\Phi$ for $i\in[1,k]$ and $\Ga^+(i)=[1,n]$
for $i\in[k+1,n]$. Denote $\land_{i\in[n]} v_i=p$,
$\land_{i\in\Phi}=q$. 
Clearly, $p=\min(q,x_{1},\dots,x_{n-k+1})$. Hence
\begin{multline*}
 \Smin{\Ga}{\xx}=\frac{x_1+\dots+x_k}{q}+\frac{x_{k+1}+\dots+x_{n}}{p}\geq
\frac{kp}{q}+\frac{(k-1)q}{p}+(n-2k+1)
\\
\geq 2\sqrt{k(k-1)}+(n-2k+1)=n-\delta(k,k-1).
\end{multline*}
Therefore $\mmin{\Ga}\geq n-\delta(k,k-1)$. In combination with previously proved upper bound it shows that $\mmin{\Ga}=n-\delta(k,k-1)$.
\end{proof}

Next comes the question about the minimum possible value of $\mmin{\Ga}$ for $\Ga\in\GraphsNN{n}$. Taken at face value, it is quite trivial.

\begin{proposition}
For any $n\geq 2$
$$
 \min_{\Ga\in\GraphsNN{n}}\mmin{\Ga}=2.
$$
%For $n>2$ the lower bound is not attained.
\end{proposition}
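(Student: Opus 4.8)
The plan is to prove the two inequalities separately: $\min_{\Ga\in\GraphsNN{n}}\mmin{\Ga}\le 2$ by exhibiting a single graph with $\mmin{\Ga}\le 2$, and $\mmin{\Ga}\ge 2$ for \emph{every} $\Ga\in\GraphsNN{n}$ with $n\ge 2$. For the first part I would take the ``path with two loops'' $\Ga_0$: nodes $v_1,\dots,v_n$, a loop at $v_1$ and at $v_n$, and the arcs $v_i\to v_{i+1}$ for $1\le i\le n-1$. One checks immediately that every node has positive in- and out-degree and that there are no multiple arcs, so $\Ga_0\in\GraphsNN{n}$. For $t>1$ set $x_{v_i}=t^{\,i}$; then $\min(\xx\mid\Ga_0^+(v_1))=x_{v_1}$, so the term at $v_1$ is $1$, the loop at $v_n$ contributes $1$, and each of the remaining $n-2$ terms equals $x_{v_i}/x_{v_{i+1}}=1/t$. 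Hence $\Smin{\Ga_0}{\xx}=2+(n-2)/t\to 2$ as $t\to\infty$, giving $\mmin{\Ga_0}\le 2$.

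For the lower bound I would argue by contradiction: assume $\Ga\in\GraphsNN{n}$, $n\ge 2$, with $\mmin{\Ga}<2$. By Proposition~\ref{prop:lbnd-minsum-sigma} we then have $|\VV(\lim\Ga_\sigma)|\le\mmin{\Ga}<2$ for every admissible $\sigma\in\mathcal{F}_\Ga$, so $|\VV(\lim\Ga_\sigma)|=1$ for all such $\sigma$: each admissible map has exactly one cyclic point. This imposes strong structure on $\Ga$. First, $\Ga$ has no directed cycle of length $\ge 2$: the cyclic shift along such a cycle is admissible on the cycle and, since $\min_v d^+(v)>0$, extends to an admissible map of $\VV(\Ga)$ having at least two cyclic points. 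Second, $\Ga$ has at most one loop: the identity map on two distinct loop-nodes likewise extends to an admissible map with two fixed points. On the other hand, following out-arcs (again using $\min_v d^+(v)>0$) produces at least one cycle, which must be a loop. So $\Ga$ has exactly one loop, at some node $v$, and no other cycle.

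The crux — and the step I expect to take the most care — is to show this last situation is impossible for $n\ge2$, and hence that no $\Ga\in\GraphsNN{n}$ satisfies $\mmin{\Ga}<2$. Delete the loop at $v$ to obtain a digraph $\Ga'$; it is now acyclic. In $\Ga$ all in-degrees are $\ge 1$, and deleting the loop lowers only the in-degree of $v$, so every node $\ne v$ still has in-degree $\ge 1$ in $\Ga'$; since a DAG must have a source, $v$ is the \emph{unique} source of $\Ga'$. By the symmetric argument on out-degrees, $v$ is also the unique sink of $\Ga'$. But any maximal directed path in $\Ga'$ starts at a source and ends at a sink, hence runs from $v$ to $v$; if it had length $\ge 1$ it would be a closed walk, impossible in a DAG, so $\Ga'$ has no arcs at all. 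Then the only arc of $\Ga$ is the loop at $v$, so any node other than $v$ (one exists because $n\ge2$) has in-degree and out-degree $0$, contradicting $\Ga\in\GraphsNN{n}$. This contradiction yields $\mmin{\Ga}\ge 2$ for all $\Ga\in\GraphsNN{n}$, which together with the construction of $\Ga_0$ proves the proposition. (A more computational route to the lower bound — normalize so the minimum coordinate is at a node $w$, note that every in-neighbour of $w$ then has its term $\ge 1$, and combine with the AM–GM estimate $\sum_{c\in C}x_c/x_{\sigma(c)}\ge|C|$ along any directed cycle $C$ — also works, but the acyclicity argument above seems the cleanest.)
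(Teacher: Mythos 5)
Your proof is correct, and it reaches the same underlying dichotomy as the paper's --- $\Ga$ must contain either a directed cycle of length $\geq 2$ or two distinct loops, each of which forces $\mmin{\Ga}\geq 2$ --- but by a different route. The paper case-splits directly on the strong components: a component with $\geq 2$ nodes contains a long cycle (handled by the subgraph monotonicity of Proposition~\ref{prop:strongred-min}(a)), while if all components are singletons, a final one and a minimal one of the condensation must each carry a loop, and Proposition~\ref{prop:strongred-min}(b) sums their contributions. You instead argue by contradiction through Proposition~\ref{prop:lbnd-minsum-sigma}: if $\mmin{\Ga}<2$ then every admissible map has a single periodic point, which rules out long cycles and second loops, and your source/sink argument on the DAG obtained by deleting the unique loop shows such a graph cannot have all in- and out-degrees positive when $n\geq 2$. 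Your version is somewhat longer but entirely self-contained at the level of admissible maps, and the DAG step is a clean substitute for the paper's appeal to the condensation order; the paper's version is shorter because Proposition~\ref{prop:strongred-min} already packages the additivity over components. The upper-bound construction (path with loops at both ends, with the explicit geometric weights) matches the paper's exhibited graph.
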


\noindent
\begin{minipage}{0.8\textwidth}
\begin{proof}
Suppose first that there is a strong component in $\Ga$ with at least 2 nodes.
Then it contains a cycle of length $\geq 2$.
Hence, by Proposition~\ref{prop:strongred-min},
$\mmin{\Ga}\geq 2$.

\smallskip
Now, if all strong components of $\Ga$ are singletons,
then at least two of them (one final and another --- either also final or one without predecessor in the condensation) contain a loop. Again, it follows that  
$\mmin{\Ga}\geq 2$.

\smallskip
The graph depicted on the right
%in Figure~\ref{fig:minminsum} 
provides the lower bound $\mmin{\Ga}=2$, by Proposition~\ref{prop:strongred-min}.
\end{proof}
\end{minipage}
\hspace{0.08\textwidth}
\begin{minipage}{0.08\textwidth}
\begin{tikzpicture}[x=0.5cm,y=0.5cm]
%--Lin(4)=Palm(4,1)---------------------------------------
\draw [-{Latex[scale=1.5,width=3pt,color=black]}] (2,2)--(2,0.17);
\draw [-{Latex[scale=1.5,width=3pt,color=black]}] (2,7)--(2,5.17);
\draw [line width=0.5pt , dash pattern= on 1.8pt off 2.2pt] (2,5)--(2,3.7);
\draw [-{Latex[scale=1.5,width=3pt]}] (2,3.65)--(2,3);
\draw [line width=0.5pt , dash pattern= on 1.8pt off 2.2pt] (2,3)--(2,2.05);
\draw [fill=white] (2,0) circle (3.6pt);
\draw [fill=white] (2,2) circle (3.6pt);
\draw [fill=white] (2,5) circle (3.6pt);
\draw [fill=white] (2,7) circle (3.6pt);
\node at (2,-0.1) (Final) {};
\draw [-] (Final.east) arc(60:-240:0.5);
\node at (2,7.1) (Initial) {};
\draw [-] (Initial.east) arc(-60:240:0.5);
\draw [-{Latex[scale=1.5,width=3pt,color=black]}] (1.55,-0.63)--(1.7,-0.14);
\draw [-{Latex[scale=1.5,width=3pt,color=black]}] (2.45,7.37)--(2.35,7.1);
\end{tikzpicture}
\end{minipage}

\bigskip
The question becomes more interesting and challenging if
we restrict the class $\GraphsNN{n}$ to its subclass
$\GraphsN{n}$ of strongly connected graphs. 

Problem~74(b,c) in \cite{Sadov2020} asks, in different terms, to prove that for $n=2021$
$$
 20<\min_{\Ga\in\GraphsN{n}}\mmin{\Ga}<21.
$$

The general result is as follows.%
\footnote{The combinatorial part of solution of Problem~74  
given in \cite{Sadov2020} can be converted to that of a general proof by %mechanical 
changing $2021$ into $n$ and making obvious induced replacements. 
We give an independent proof here, in part because the proof
of one of the key lemmas (Problem~71) in \cite{Sadov2020} ver.~1  is grossly inaccurate.}

\begin{theorem}
\label{thm:minminsum-strong}
For any $\Ga\in\GraphsN{n}$
\beq{minminsum-strong}
 \mmin{\Ga}>e\,\ln(n+1-\ln(n+1)).
\eeq
Furthermore,
\beq{asymminsum-strong}
 \min_{\Ga\in\GraphsN{n}}\mmin{\Ga}=e\ln n+O\left(\frac{1}{\ln n}\right)
\eeq
as $n\to\infty$.
\end{theorem}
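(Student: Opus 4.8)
The plan is to treat the two assertions separately, but with a shared combinatorial heart: bounding $\mmin{\Ga}$ from below by a quantity that depends only on how fast a directed cycle through a fixed vertex can ``fan out'' in a strongly connected graph on $n$ nodes, and then showing this bound is essentially tight via an explicit near-optimal construction.

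For the lower bound \eqref{minminsum-strong}, I would start from Lemma~\ref{lem:unary-red}: for any admissible $\xx$ there is an $\xx$-admissible $\sigma$ with $\Smin{\Ga}{\xx}=\sum_v x_v/x_{\sigma(v)}$. Fix a minimizer $\xx$ (Lemma~\ref{lem:minsum-minimizer}), normalize so $\min_v x_v=1$, and let $v_0$ achieve the minimum. Following the iteration in the proof of Theorem~\ref{thm:max-girth}, stratify $\VV(\Ga)$ by distance-to-$v_0$ along $\sigma$-orbits, or rather by the ``layers'' $V_0\supset$-style sets reachable from $v_0$; since $\Ga$ is strongly connected every vertex has a $\sigma$-path reaching the cycle containing $v_0$. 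The key estimate is that if a layer $V_j$ has $n_j$ vertices, each contributes a term $x_v/x_{\sigma(v)}$, and the multiplicative telescoping along any cycle forces the product of ratios around it to be $1$; combining this with the AM--GM-based results of \cite{Sadov2020} (as invoked in Theorem~\ref{thm:chamber_optimization}) gives that the sum over a layer is at least something like $n_j \cdot (n_{j-1}/n_j)$-type quantity. Optimizing the layer sizes $n_0,\dots,n_H$ subject to $\sum n_j=n$ and the constraint that consecutive layers can grow by at most a bounded factor leads to a geometric-type profile $n_j \approx n_0 r^j$, and the sum $\sum_j n_j \cdot (\text{ratio term})$ is minimized, after a continuous relaxation, by a profile whose total contribution is $\sim e\ln n$; the constant $e$ emerges exactly as in the classical fact that $\min_{r>1} r/\ln r = e$. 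Making the layer argument rigorous — in particular handling the interplay between the additive sum of ratios and the multiplicative cycle constraints, and justifying that the worst layer profile really is near-geometric — is where I expect the main work to lie, and I would lean heavily on the critical-point uniqueness result from \cite{Sadov2020} to pin down the per-layer minimum. The explicit numerical form $e\ln(n+1-\ln(n+1))$ in \eqref{minminsum-strong} suggests one should carry out the continuous optimization exactly rather than asymptotically at this stage, so that a clean strict inequality survives for all $n$.

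For the asymptotic upper bound in \eqref{asymminsum-strong}, I would construct, for each $n$, a strongly connected $\Ga\in\GraphsN{n}$ realizing the profile found above: a ``layered'' graph with layers $V_0,V_1,\dots,V_H$ of near-geometric sizes, where $V_0$ is a single short cycle, each vertex of $V_{j}$ has all of $V_{j-1}$ as its out-neighborhood (to make the $\land$ in the denominator large), and a single return arc from $V_0$ back up through the layers is threaded in to guarantee strong connectedness without changing $\mmin{\Ga}$ by more than $O(1)$. Then, choosing the test vector $\xx$ with $x_v = \rho^{-j}$ for $v\in V_j$ (with $\rho = e$ up to lower-order corrections, matching the optimizer), one computes $\Smin{\Ga}{\xx}$ directly: the layer-$j$ vertices contribute $n_j \cdot (\rho^{-j}/\rho^{-(j-1)}) = n_j/\rho$ each term being $\rho^{-1}$, summed appropriately, plus the $O(1)$ from the return path, and the total works out to $e\ln n + O(1/\ln n)$. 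Matching this to the lower bound gives the stated asymptotic equality. The delicate point on this side is the error analysis: one must show the discretization of the continuous profile (integer layer sizes, the exact choice of $H$ and of $\rho$) costs only $O(1/\ln n)$ and not, say, $O(1)$; this forces a somewhat careful choice of parameters, e.g.\ taking $H \sim \ln n$ and $\rho = e(1+O(1/\ln n))$ tuned so that the geometric sum $\sum \rho^{-j}$ closes up exactly against $n$.

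In summary, the proof has the shape: (i) reduce to the unary/ratio form and set up distance layers from a minimizing vertex; (ii) invoke the AM--GM / unique-critical-point machinery of \cite{Sadov2020} to lower-bound the per-layer contribution; (iii) solve the resulting finite optimization over layer-size profiles to extract the constant $e$ and the exact expression in \eqref{minminsum-strong}; (iv) build a matching layered graph with the geometric test vector and verify the $O(1/\ln n)$ error for \eqref{asymminsum-strong}. The principal obstacle is step (ii)–(iii): correctly marrying the additive objective to the multiplicative cycle constraints so that the layer estimate is both valid and tight enough to yield the precise constant $e$ rather than merely $\Theta(\ln n)$.
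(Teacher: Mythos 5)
There is a genuine gap: you have misidentified the extremal structure, and both halves of your argument inherit the error. The configuration achieving $e\ln n$ is \emph{not} a cascade of layers of geometrically growing sizes; it is a single thin directed path of length $k\approx\ln n$ along which the \emph{values} $x_v$ (not the layer cardinalities) decay geometrically with ratio $\approx e$, followed by one fat block of the remaining $n-k$ vertices, all sitting at the minimum value and pointing back to the head of the path. Concretely, the paper takes $\Ga_k^+(i)=\{i+1\}$ for $i<k$, $\Ga_k^+(k)=\{k+1,\dots,n\}$, $\Ga_k^+(i)=\{1\}$ for $i>k$, obtaining $kt^{1/k}+(n-k)/t$ with $t=x_1/x_{k+1}$, whose minimum $(k+1)(n-k)^{1/(k+1)}$ is smallest near $k\approx\ln n$, where $(n-k)^{1/(k+1)}\approx e$. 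In your construction, where every vertex of $V_j$ points into $V_{j-1}$ and $x_v=\rho^{-j}$ on $V_j$, \emph{every} vertex outside $V_0$ contributes $1/\rho$, for a total of about $(n-n_0)/\rho$; with $\rho=e$ this is linear in $n$, and if instead you enlarge $\rho$ to tame that sum, the return path required for strong connectedness must traverse a value drop of $\rho^{H}$ and hence (by AM--GM over its steps) costs at least $e\ln(\rho^{H})$, so this shape never gets down to $e\ln n$. The constant $e$ here does not come from $\min_{r>1}r/\ln r=e$ but from $m^{1/\ln m}=e$, i.e.\ from minimizing $z\,m^{1/z}$ over the path length $z$.

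The lower bound is correspondingly much simpler than the layer machinery you sketch and needs neither the cycle-product constraints nor the critical-point results of \cite{Sadov2020} (those serve Theorem~\ref{thm:chamber_optimization}, not this theorem). Take a minimizer $\xx$, let $v^*$ and $v_*$ carry the maximal and minimal values, and choose one simple path $v^*=v_1\to\dots\to v_{k+1}=v_*$. Along it $\land(\xx|\Ga^+(v_i))\le x_{i+1}$, so those $k$ terms sum to at least $k(x^*/x_*)^{1/k}$ by AM--GM (the ratios telescope to $x^*/x_*$, not to $1$ --- this is a path, not a cycle); every remaining vertex is bounded below by the trivial $x_*/x^*$. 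This gives $\mmin{\Ga}\ge\min_{1\le k\le n-1}\min_{t>0}\bigl(kt^{1/k}+(n-k)/t\bigr)=\min_k(k+1)(n-k)^{1/(k+1)}$, exactly matching the construction; the explicit bound $e\ln(n+1-\ln(n+1))$ and the $O(1/\ln n)$ asymptotics then follow from the elementary analysis of $f(z,r)=\ln z+\ln(r-z)/z$ quoted from \cite{KalachevSadov_2017} and \cite{Sadov2020}. Your per-layer estimate ``$n_j\cdot(n_{j-1}/n_j)$'' is unsubstantiated (summed over $j$ it would give a bound of order $n$, which is false), and even made rigorous the layer-profile optimization is aimed at the wrong extremal shape and cannot produce the tight constant.
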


\begin{proof} 
Suppose $\xx$ is a minimizer for $\Smin{\Ga}{\cdot}$.
Put $x_*=\min(x_v, v\in\VV)$, $x^*=\max(x_v,v\in\VV)$.
Let $v_*\neq v^*$ be two nodes such that
$x_{v_*}=x_*$ and $x_{v^*}=x^*$.

Consider a simple (not self-intersecting) path $\pi$ from $v^*$ to $v_*$. Let us label the nodes on the path $\pi$ by numbers from $1$ to $k+1$ so that 
$\pi=(v_1\to v_1\dots\to v_{k+1})$. In particular,
$v_1=v^*$, $v_{k+1}=v_*$.
If $k+1<n$, we label the remaining nodes by numbers $k+2,\dots,n$ arbitrarily.
The node values $x_{v_k}$ will be written as $x_k$ for short.

Obviously, $\land(\xx|\Ga^+(v_i))\leq x_{i+1}$
for $i=1,\dots,k$.
Hence
$$
 \sum_{i=1}^{k}\frac{x_i}{\land(\xx|\Ga^+(v_i))}
\geq \sum_{i=1}^{k}\frac{x_i}{x_{i+1}}\geq
k\left(\frac{x^*}{x_*}\right)^{1/k}.
$$
(The last step uses the AGM inequality.)

For the remaining nodes we use the trivial estimates
 $\land(x_i|\Ga^+(i))\leq x^*$
and
$x_i\geq x_*$. 
Therefore
$$
 \sum_{i=k+1}^{n}\frac{x_i}{\land(\xx|\Ga^+(v_i))}
\geq (n-k)\frac{x_*}{x^*}.
$$

We conclude that
$$
 \mmin{\Ga}=\Smin{\Ga}{\xx}\geq \min_{1\leq k\leq n-1}\,\min_{t>0} \left(kt^{1/k}+\frac{n-k}{t}\right).
$$
The minimum of the inner function equals $(k+1)(n-k)^{\frac{1}{k+1}}$, attained at $t=(n-k)^{\frac{k}{k+1}}$.

Let $z=k+1$ and $r=n+1$. Then 
$$
\ln\left((k+1)(n-k)^{\frac{k}{k+1}}\right)=f(z,r)\,\stackrel{\text{def}}{=}\,
\ln z+\frac{\ln(r-z)}{z}.
$$
It can be shown that $\min_{1<z\leq r-1} f(z,r)
>1+\ln\ln(r-\ln r)$.
The calculation is straightforward but not too short; details can be found in \cite[Solution of Problem~67B]{Sadov2020} (an even more precise estiimate for $\min_z f(z,r)$ is given in \cite{KalachevSadov_2017}). 
The inequality \eqref{minminsum-strong} follows.

From our analysis one sees that the equality
$$
\Smin{\Ga}{\xx}=k\left(\frac{x^1}{x_{k+1}}\right)^{1/k}+
(n-k)\frac{x_{k+1}}{x^1}
$$
takes place (at least) for the graph $\Ga_k$ with
$\VV(\Ga_k)=[n]$,
$$
\ba{l}
\Ga_k^+(i)=\{i+1\},\quad i=1,\dots,k-1;
\\
\Ga_k^+(k)=\{k+1,\dots,n\};
\\
\Ga_k^+(i)=\{1\},\quad i=k+1,\dots,n,
\ea
$$
provided that $x_1\geq x_2\geq\dots\geq x_{k+1}=\dots=x_n$. 

The corresponding min-sum is
$$
\Smin{\Ga_k}{\xx}=\frac{x_1}{x_2}+\dots+\frac{x_{k-1}}{x_k}+\frac{x_k}{\land(x_{k+1},\dots,x_n)}+
\frac{1}{x_1}\sum_{i=k+1}^n x_i.
$$

One can find the asymptotics ($r\to\infty$) of the minimum of the function $f(z,r)$ (again, we refer to \cite{Sadov2020} or \cite{KalachevSadov_2017} for details) and obtain
$$
\min_{1<z\leq r-1} \exp f(z,r)=e\ln r+O\left(\frac{1}{\ln r}\right).
$$
The asymptotic formula remains true if $z$ is allowed to assume only integer values. Putting $k=z$ and minimizing over $k$, we find that in the extremal case
$\mmin{\Ga_k}=e\ln(n+1)+O((\ln n)^{-1})$, which is equivalent to \eqref{asymminsum-strong}.
\end{proof}


\begin{thebibliography}{10}
\addcontentsline{toc}{section}{References}

\bibitem{Ando_2013}
T.~Ando,
A new proof of Shapiro inequality,
\textit{Math.\ Ineq.\ Appl.} 16:3, 611--632 (2013).
DOI:10.7153/mia-16-46
%Shapiro for $n= 23$ (max odd)

\bibitem{BangGutin_2002} 
J. Bang-Jensen, G.Z. Gutin, 
\textit{Digraphs: Theory, Algorithms and Applications}, Springer, 2002.

\bibitem{Baston_1973}
W.J.~Baston, Some cyclic inequalities. 
\textit{Proc. Edinburgh Math. Soc.} 19:2, 115--118 (1974). DOI:10.1017/S0013091500010221

%\bibitem{BeckBel_1961} E.~Beckenbach, R.~Bellman.
%\textit{Inequalities}, Springer-Verlag, Berlin-G\"ottingen-Heidelberg, 1961.

\bibitem{Bergeron-Labelle-Leroux_1998}
F.~Bergeron, G.~Labelle, P.~Leroux,
\textit{Combinatorial species and tree-like structures}, %Encyclopedia of Mathematics, 
Cambridge Univ.\ Press, Cambridge, 1998.
%``Комбинаторные твари и древоподобные структуры''

\bibitem{Boarder_Daykin_1973}
J.C.~Boarder and D.E.~Daykin, Inequalities for certain
cyclic sums II.
\textit{Proc.\ Edinburgh Math.\ Soc.}, 18:3, 209--218
(1973).
DOI:10.1017/S0013091500009949

\bibitem{Boltzmann_1898}
L.~Boltzmann, 
\textit{Vorlesungen \"{u}ber Gastheorie}, Bd.~2., Barth, Leipzig,
1898;
English translation: \textit{Lectures on Gas Theory}, v.~2,
Univ. of California Press, 1964.

\bibitem{CacHag_1978} 
L.~Caccetta, R.~H\"aggkvist, On minimal digraphs with given girth. In: Proc. 9th Southeastern Conf.\ on Combinatorics, Graph Theory, and Computing (Florida Atlantic Univ., Boca Raton, Fla., 1978),
\textit{Congr.\ Numer.}, 21, \textit{Utilitas Math.}, 
%Winnipeg, Man.
181--187 (1978). MR83h: 05055. 
 
\bibitem{Choudum_1972}
S.A.~Choudum, K.R.~Parthasarathy,
Semi-regular relations and digraphs,
\textit{Indagationes Mathematicae} (Proceedings),
75:4, 326--334 (1972).
DOI:10.1016/1385-7258(72)90047-9

\bibitem{Clausing_1992}
A.~Clausing.
A review of Shapiro's cyclic inequality.  In: \textit{General Inequalities 6} (W. Walter, ed.).
Int. Series of Numerical Math., v.~103, 
Birkh\"auser, Basel, 1992, 
17--30.

\bibitem{Daykin_1971}
D.E.~Daykin,  Inequalities for functions of a cyclic nature, 
\textit{J.\ London Math.\ Soc. (2)}, 3:3, 453--462 (1971).
DOI:10.1112/jlms/s2-3.3.453

\bibitem{Diananda_1959}
P.H.~Diananda
Extensions of an Inequality of H.S.~Shapiro
\textit{The American Mathematical Monthly}, 66:6, 489--491 (1959). 
DOI:10.2307/2310633 

\bibitem{Diananda_1973}
P.H.~Diananda
Some cyclic and other inequalities III,
\textit{Math.\ Proc.\ of the Cambridge Philos.\ Soc.} 73:1, 69--71 (1973).
DOI:10.1017/s0305004100047484

\bibitem{Diananda_1974}
P.H.~Diananda
Some cyclic and other inequalities IV,
\textit{Math.\ Proc.\ of the Cambridge Philos.\ Soc.} 76:1, 183--186 (1974).
DOI:10.1017/s0305004100048842

\bibitem{Drinfeld_1971}
V.G.~Drinfeld, 
A cyclic inequality.  
\textit{Math.\ Notes}, 
9:2, 68--71 (1971).
DOI:10.1007/BF01316982

\bibitem{Finch_2003}
S.~Finch, \textit{Mathematical constants}, Princeton Univ.\ Press, Princeton, 2003.

\bibitem{Fink_1998}
A.M. Fink, Shapiro's inequality, in: \textit{Recent Progress in Inequalities} (G.V. Milovanovi\'c, ed.), Springer, 1998, 241--248. 

\bibitem{GodLev_1973} 
E.K.~Godunova, V.I.~Levin, 
Sharpening certain cyclic inequalities,
\textit{Math.~Notes}, 14:3, 735--741 (1973).
DOI:10.1007/BF01147447

\bibitem{GodLev_1976} 
E.K.~Godunova, V.I.~Levin, Exactness of a nontrivial estimate in a cyclic inequality, 
\textit{Math.~Notes}, 20:2, 673--675 (1976).
DOI:10.1007/BF01155872

\bibitem{GodLev_1982} 
E.K.~Godunova, V.I.~Levin, 
Lower bound for a cyclic sum, 
\textit{Math.~Notes}, 32:1, 481--483 (1982).
DOI:10.1007/BF01137219

\bibitem{Goldberg_1960} 
K.~Goldberg,
The Minima of Cyclic Sums,
\textit{J.\ London Math.\ Soc.} 35:3, 262--264 (1960). 
DOI:10.1112/jlms/s1-35.3.262

\bibitem{HLP}
G.H.~Hardy, J.E.~Littlewood, G.P\'{o}lya,
\textit{Inequalities}, Cambridge Univ. Press, Cambridge, 1934.

\bibitem{ItaiRodeh_1978}
A.~Itai, M.~Rodeh,
Finding a minimum circuit in a graph,
\textit{SIAM J.~Comput.}
{7:4} (1978), 413--423.
DOI:10.1145/800105.803390


\bibitem{KalachevSadov_2017}
G.V.\ Kalachev, S.Yu.\ Sadov, A logarithmic inequality, 
\textit{Math.~Notes} {103}:2, 209--220 (2018).
DOI:10.1134/S0001434618010224

\bibitem{Mitrinovic_1993}
D.S.~Mitrinovi\'c, J.E.~Pe\u{c}ari\'c, A.M.~Fink,
\textit{Classical and New Inequalities in Analysis}, Kluwer, Dodrecht-Boston-London, 1993.

\bibitem{Nesbitt_1903}
A.M.~Nesbitt,  Problem 15114. \textit{Educ. Times} (2) 3, 37 (1903).

%\bibitem{PolyaSzego_1927}
%G.~P\'olya, G.~Szeg\"o, Problems and Theorems in Analysis, vol.~1,  Springer-Verlag, Berlin, New York, 1972.

\bibitem{Sadov_2016}
S.~Sadov,
Lower bound for cyclic sums of Diananda type,
\textit{Arch.~Math.} {106}, 135--144 (2016).
DOI:10.1007/s00013-015-0853-3

%\bibitem{Sadov_Shallit}
%S.~Sadov,
%On Shallit's minimization problem,
%\href{https://arxiv.org/abs/1806.03651}{arxiv.org/1806.03651} (2018).

\bibitem{Sadov2020}
S.~Sadov,
Minimization of the sum under product constraints,
\href{https://arxiv.org/abs/2012.15517}{arxiv.org/2012.15517} (2020).

\bibitem{Shapiro_1954}
 H.S.~Shapiro, Advanced problem 4603.
 \textit{Amer. Math. Monthly} 
% \textbf
{61}, 
 571 (1954).
 
%\bibitem{Stanley_1999}
%R.P.~Stanley, \textit{Enumerative combinatorics}, v.~1, Cambridge Univ.~Press, Cambridge, 1999.

%\bibitem{Troesch_1985}
%B.A. Troesch, On Shapiro's cyclic inequality for $N=13$, 
%\textit{Math. of Comput.} \textbf{45}, 199--207 (1985).
% http://www.ams.org/journals/mcom/1985-45-171/S0025-5718-1985-0790653-0/S0025-5718-1985-0790653-0.pdf

%\bibitem{Troesch_1989}
%B. A. Troesch,
%The validity of Shapiro's cyclic inequality,
%\textit{Math. of Comput.} \textbf{53}, 657--664 (1989).

\bibitem{West_2001}
D.B.~West, \textit{Introduction to graph theory}, 2nd ed., Pearson Education, 2001.

\bibitem{Yamagami_1993}
%Shigeru Yamagami 
S.~Yamagami, 
Cyclic Inequalities,
\textit{Proc. Amer. Math. Soc.}
118:2, 521--527 (1993).
DOI:10.2307/2160333  

\end{thebibliography}
\end{document}